\documentclass[11pt, a4paper]{article}
\usepackage{amsmath, amssymb, amsthm,mathtools}
\usepackage{empheq}
\usepackage{mathrsfs}
\usepackage{geometry}
\usepackage{graphicx}
\usepackage{hyperref}
\usepackage{enumitem}
\usepackage[numbers]{natbib} 
\usepackage{parskip}
\setcitestyle{square}

\usepackage{mathrsfs}
\usepackage{bm}
\usepackage{xcolor}

\newtheorem{theorem}{Theorem}[section]
\newtheorem{lemma}[theorem]{Lemma}
\newtheorem{proposition}[theorem]{Proposition}
\newtheorem{corollary}[theorem]{Corollary}
\newtheorem{definition}[theorem]{Definition}

\numberwithin{equation}{section}
\usepackage[capitalize]{cleveref}

\newcommand{\cL}{\mathcal{L}}
\newcommand{\cM}{\mathcal{M}}
\newcommand{\cT}{\mathcal{T}}
\newcommand{\cH}{\mathcal{H}}
\newcommand{\R}{\mathbb{R}}
\newcommand{\C}{\mathbb{C}}
\newcommand{\Z}{\mathbb{Z}}
\newcommand{\N}{\mathbb{N}}
\newcommand{\ka}{\kappa}
\newcommand{\eps}{\varepsilon}
\newcommand{\cP}{\mathcal{P}}
\newcommand{\cA}{\mathcal{A}}

\newcommand{\cC}{\mathcal{C}}
\newcommand{\cG}{\mathcal{G}}
\newcommand{\cI}{\mathcal{I}}
\newcommand{\cB}{\mathcal{B}}
\newcommand{\noopsort}[1]{}

\title{Instability of periodic waves in generalized Korteweg--de Vries--Burgers equation with monostable source}
\author{Aarushi Kansal \& Ashish Kumar Pandey \\ \textit{IIIT Delhi} \\ \texttt{aarushik@iiitd.ac.in, ashish.pandey@iiitd.ac.in}}
\date{}

\begin{document}

\maketitle

\begin{abstract}
We study periodic traveling waves in a general class of nonlinear dispersive equations with Burgers dissipation and a monostable Fisher--KPP reaction source. Under natural assumptions on the dispersion symbol, we prove the existence of small-amplitude periodic traveling waves bifurcating from a constant equilibrium by means of a Lyapunov--Schmidt reduction. We then investigate the spectral stability of these waves by combining Floquet--Bloch decomposition with perturbation theory for linear operators. It is shown that the Floquet spectrum of the linearized operator intersects the unstable complex half-plane, implying spectral instability with respect to localized perturbations. The results recover the recently established instability theory for the
Korteweg--de Vries--Burgers--Fisher equation as a special case and
provide a unified framework for a broad class of local and nonlocal
dispersive systems. These results reveal a common instability mechanism generated by the interplay of dispersion, Burgers dissipation, and monostable reaction dynamics.
\end{abstract}

\section{Introduction}\label{S1:Intro}
We investigate the existence and spectral stability properties of spatially periodic traveling wave solutions to the generalized  Korteweg--de Vries--Burgers (KdVB) equation augmented with a monostable reaction source term, governed by the following Partial Differential Equation (PDE)
\begin{equation}\label{S1:eq1}
u_{t} + \cM u_{x} + \alpha u u_{x} = u_{xx} + r u(1-u),
\end{equation}
where $u = u(x,t) \in \R$ is the state variable representing the wave profile at position $x \in \R$ and time $t > 0$. The parameter $\alpha \in \R \setminus \{0\}$ scales the strength of the convective nonlinearity, while $r > 0$ denotes the intrinsic growth rate of the monostable reaction source. The term $u_{xx}$ represents a standard parabolic, Burgers-type kinematic dissipation \cite{Burgers1948}. The operator $\cM$ is a self-adjoint, nonlocal linear pseudodifferential operator acting on the spatial domain, formally defined via its Fourier transform by
\begin{equation}\label{S1:eq2}
\widehat{\cM f}(\xi) = m(\xi) \widehat{f}(\xi),
\end{equation}
where $m(\xi)$ is referred to as the symbol or the Fourier multiplier of the dispersion operator. 

To ensure the existence of periodic traveling wave solutions to \eqref{S1:eq1}, we impose several assumptions on the Fourier multiplier $m(\xi)$: 
\begin{enumerate}[label={(A\arabic*)}]
    \item The function $m: \R \to \R$ is continuous, even ($m(-\xi) = m(\xi)$ for all $\xi \in \R$), and analytic in $\R\backslash \{0\}$.\label{assumption1} 
    \item The symbol $m(\xi)$ possesses a sublinear or algebraic growth/decay at infinity; specifically, there exist constants $M>0$, $C_1 > 0$, $C_2>0$ and $\beta \ge -1$ such that $C_1|\xi|^\beta \leq |m(\xi)| \le C_2 |\xi|^\beta$ for all $|\xi|> M$.\label{assumption2}
    \item The symbol $m(\xi)$  satisfies strict inflection properties on the positive real axis $\R^+$ ensuring that resonance conditions are nondegenerate. Specifically, we exclude  wavenumbers $\kappa$ for which $m(\kappa)= m(\kappa n)$ for some $n\in \N$, $n> 1$.\label{assumption3}
\end{enumerate}

\subsection{Related Dispersive Model Equations}
Different choices of the nonlocal operator $\cM$ or its associated Fourier multiplier $m(\xi)$ in \eqref{S1:eq1} provide an extensive array of classical and contemporary models in fluid mechanics and mathematical physics:
\begin{enumerate}
   \item \textbf{The Korteweg--de Vries (KdV) Equation:}
Choosing $$m(\xi)=-\xi^2,$$ yields the classical local dispersion operator
$\cM=\partial_x^2$. Originally formulated by Korteweg and de Vries
\cite{KdV1895} to describe the propagation of long, weakly nonlinear
surface gravity waves in shallow water, the KdV equation has become one
of the fundamental models in nonlinear wave theory. It is completely
integrable, possesses infinitely many conservation laws, and admits a rich
family of coherent structures including solitary waves and periodic
traveling waves (cnoidal waves) \cite{Whitham1974, Ablowitz_Clarkson_1991}. The existence,
spectral properties, and nonlinear stability of periodic KdV waves have
been extensively investigated and serve as a benchmark for the study of
periodic waves in dispersive systems
\cite{Gardner1993,BronskiJohnsonKapitula2010}.
When augmented with Burgers-type dissipation, the equation reduces to
the classical KdVB equation, originally
derived by Su and Gardner \cite{SuGardner1969} under weak nonlinearity
and long-wave scaling assumptions. The KdVB equation represents one of
the simplest physically relevant models exhibiting the simultaneous
interaction of nonlinear convection, dispersion, and dissipation, and
has been applied to the study of plasma waves, undular bores, viscous
shallow-water flows, and turbulence phenomena
\cite{Johnson1970,JeffreyKakutani1972}. 
\item \textbf{The Whitham Equation:}
Choosing
$$m(\xi)=\sqrt{\frac{\tanh(\xi)}{\xi}},$$
recovers the Whitham equation introduced by Whitham
\cite{Whitham1967,Whitham1974} as a nonlinear model retaining the
exact linear dispersion relation for surface water waves while
preserving the quadratic nonlinearity of the KdV equation. In contrast
to KdV, the Whitham equation combines full linear dispersion with
nonlinearity and is therefore capable of capturing phenomena beyond
the reach of long-wave asymptotic models. The equation has attracted
considerable attention in recent years due to its ability to describe wave breaking, the existence of highest and cusped traveling waves, and modulational instability mechanisms analogous to the Benjamin--Feir instability observed in water waves
\cite{Hur2015_breaking,JohnsonHur2015,EhrnstromWahlen2019}.
Periodic traveling waves of the Whitham equation have been studied
extensively, and their spectral and modulational stability properties
are now relatively well understood
\cite{EhrnstromWahlen2019,Hur2015_stab,BronskiHurJohnson2016}.
\item \textbf{The Benjamin--Ono (BO) Equation:}
Choosing $$m(\xi)=-|\xi|,$$
yields the nonlocal dispersion operator
$\cM=-\cH\partial_x$, where $\cH$ denotes the Hilbert transform.
The resulting Benjamin--Ono equation was independently derived by
Benjamin \cite{Benjamin1967_BO} and Ono \cite{Ono1975} as a model
for the propagation of long internal waves along a sharp density
interface in a deep stratified fluid. In contrast to the KdV equation, the BO equation possesses a genuinely nonlocal dispersive mechanism,
yet remains completely integrable and admits both solitary and periodic
traveling wave solutions. Owing to the presence of the Hilbert transform,
the spectral and dynamical properties of BO waves differ substantially
from those of local dispersive equations and have been the subject of
extensive mathematical investigation
\cite{Angulo2009,Matsuno1979}. Dissipative perturbations of BO lead to
the Benjamin--Ono--Burgers equation, an
important prototype for the interaction between nonlocal dispersion,
viscosity, and dispersive shock formation \cite{Matsuno2007BOBurgers}. 
\item \textbf{The Fractional Korteweg--de Vries (fKdV) Equation:}
Choosing
$$ m(\xi)=-|\xi|^\beta,
\qquad \beta\in(0,2],$$
yields the family of fractional Korteweg--de Vries equations with
nonlocal dispersive operator $\cM=-\Lambda^\beta$, where
$\Lambda=(-\partial_x^2)^{1/2}$. These models interpolate between the
Benjamin--Ono equation $(\beta=1)$ and the classical KdV equation
$(\beta=2)$, providing a natural framework for studying the influence
of dispersion strength and nonlocality on nonlinear wave propagation.
The fKdV equation has attracted considerable attention in connection
with wave breaking, global regularity, and the existence and stability
of traveling waves. In particular, Johnson \cite{Johnson2013}
established the existence of small-amplitude periodic traveling waves
and derived modulational instability criteria depending explicitly on the dispersion exponent $\beta$, revealing qualitative transitions in the stability behavior as the degree of nonlocality varies. Dissipative perturbations of fractional KdV equations, obtained by
combining fractional dispersion with Burgers-type diffusion, have also
been investigated as models for the interaction between nonlocal
dispersion and viscous effects \cite{Biler2001}.
\item \textbf{The Intermediate Long Wave (ILW) Equation:}
Choosing
$$m(\xi)=\xi\coth(\delta\xi)-\frac{1}{\delta},
\qquad \delta>0,$$
yields the Intermediate Long Wave (ILW) equation, a nonlocal dispersive
model describing the propagation of long internal waves in a stratified
fluid of finite depth \cite{Joseph1977,KubotaKoDobbs1978}.
The parameter $\delta$ represents the relative depth of the fluid layer,
and the ILW equation provides a continuous transition between the
Korteweg--de Vries and Benjamin--Ono equations through the asymptotic
limits $\delta\to0$ and $\delta\to\infty$, respectively. Like the BO
equation, the ILW equation is completely integrable and possesses both
solitary and periodic traveling wave solutions. The existence,
stability, and qualitative properties of periodic ILW waves have been
studied extensively, with explicit formulations in terms of elliptic
functions playing a central role in their analysis
\cite{Pava2017,Angulo2009}. The dissipative ILW equation obtained by incorporating the Burgers-type diffusion provides an intermediate dispersive-dissipative model
connecting the KdV--Burgers and BO--Burgers regimes.
\item \textbf{The Kawahara Equation:}
Choosing $$m(\xi)=-\xi^2+\gamma\xi^4,$$
with $\gamma>0$, yields the Kawahara equation, a higher-order dispersive model introduced by Kawahara \cite{Kawahara1972} to describe the propagation of weakly nonlinear long waves when the leading-order third-order dispersion becomes small, and fifth-order dispersive effects
must be retained. The competition between third and fifth-order
dispersion leads to richer wave dynamics than those observed in the KdV
equation, including oscillatory solitary waves and intricate periodic
traveling wave patterns. The existence, spectral stability, and
modulational stability of periodic traveling waves for the Kawahara equation and related fifth-order dispersive models have been studied extensively in recent years \cite{Haragus2011,TrichtchenkoDeconinckKollar2018}.
\end{enumerate}

\subsection{Physical Relevance of the Monostable Source Term}
The inclusion of the monostable reaction term $ru(1-u)$ in
\eqref{S1:eq1} fundamentally alters the nature of the underlying
dynamics. In contrast to purely dispersive or dispersive-dissipative
models, which primarily describe the transport, redistribution, or dissipation of energy, the reaction term introduces an internal
mechanism for growth and energy production. Consequently, the resulting equation belongs to the broader class of reaction-convection-dispersion
systems and may be viewed as describing an active medium whose dynamics
are driven not only by transport and wave propagation but also by local
self-amplification processes.

The nonlinear source term $u(1-u)$ was introduced independently by
Fisher \cite{Fisher1937} and by Kolmogorov, Petrovsky, and Piskunov
\cite{KPP1937} in their pioneering studies of the spatial spread of
advantageous genetic traits. The resulting Fisher--KPP equation became
one of the foundational models in the theory of reaction-diffusion
systems and has since found applications in population dynamics,
epidemiology, combustion theory, chemical kinetics, ecological invasion,
tumor growth, and pattern formation. More generally, monostable reaction
terms describe systems possessing two equilibrium states, an unstable
state $u=0$ and a stable state $u=1$, with the nonlinear growth
mechanism driving solutions away from the former and toward the latter.

A hallmark of Fisher-KPP dynamics is the emergence of traveling fronts connecting the unstable and stable equilibria. The existence,
propagation, and stability of such fronts have been studied extensively for reaction-diffusion equations and their extensions; see, for
instance, \cite{Fisher1937,KPP1937,Murray2002,Volpert1994}. Reaction-dispersion systems often exhibit behaviors that are absent in their conservative counterparts, including oscillatory invasion fronts, wave generation behind propagating interfaces, and destabilization of otherwise stable wave patterns \cite{Maini2006}. 

From a mathematical perspective, the monostable source term destroys many of the conservation laws and continuous symmetries present in classical dispersive equations. Consequently, the continuous families of traveling and periodic waves characteristic of integrable models are often replaced by isolated branches selected through nonlinear balance
between dispersion, dissipation, and reaction. This selection mechanism
has significant implications for spectral stability and bifurcation structure \cite{vanSaarloos2003}. 

The present work is motivated by the observation that many physically relevant dispersive models, including the KdV, Whitham, Benjamin--Ono,
fractional KdV, Intermediate Long Wave, and Kawahara equations, admit natural dissipative extensions through Burgers diffusion. It is therefore natural to investigate how the additional presence of a monostable reaction term influences the existence and stability of periodic traveling waves across this broader class of dispersive
systems.

\subsection{Periodic Traveling Wave Setting and Main Theorems}
Periodic traveling waves play a fundamental role in nonlinear wave theory, serving as building blocks for more complicated wave patterns
and providing insight into the long-time dynamics of dispersive media. In the present setting, the simultaneous presence of nonlocal dispersion, Burgers dissipation, and a monostable reaction term creates a delicate balance that may generate nontrivial periodic wave trains through bifurcation from a uniform equilibrium state.

To seek such solutions, we introduce the traveling-wave ansatz
\begin{equation}\label{S1:eq3}
u(x,t)=w(z),\qquad z=\kappa(x-ct),
\end{equation}
where $c\in\mathbb R$ denotes the wave speed and $\kappa>0$ is the fundamental wavenumber. The profile function $w$ is assumed to be
$2\pi$-periodic in $z$, so that the corresponding solution is $\frac{2\pi}{\kappa}$-periodic in the spatial variable $x$. Substituting \eqref{S1:eq3} into \eqref{S1:eq1} transforms the problem into a
nonlinear nonlocal ordinary differential equation for the periodic profile $w$.

Our first result establishes the existence of a branch of small-amplitude periodic traveling waves bifurcating from the trivial equilibrium
solution. The bifurcation occurs near the distinguished parameter values $$
(\kappa_0,c_0)=\bigl(\sqrt r,m(\sqrt r)\bigr),$$ which arise from the linearization of the profile equation and identify the onset of periodic behavior.
\vspace{10pt}
\begin{theorem}[Existence and Asymptotics of Periodic Traveling Waves]\label[theorem]{S1:thm1}
Suppose assumptions $\mathrm{(A1)-(A3)}$ hold. Then, there exists an open neighborhood of the critical parameters $(\kappa_0, c_0) = (\sqrt{r}, m(\sqrt{r}))$ and a small parameter $\eps_0 > 0$ such that for each $\eps \in (0, \eps_0)$, there exists a unique (up to translation) nontrivial solution $u(x,t)=w(\kappa (\eps)(x-c(\eps)t))=:w^\eps(z)$ to \eqref{S1:eq1} with wave speed $c(\eps)$ and wavenumber $\kappa(\eps)$, where $w$ is $2\pi$-periodic and smooth in its argument. Moreover, $w$, $\ka$, and $c$ are analytic at $\eps=0$ with their leading order expansions:
\begin{align}
w^\eps(z) &= \eps \cos(z) + \eps^2\left(\frac12+A_2 \cos(2z)+B_2\sin(2z)\right) + \mathcal{O}(\eps^3), \label{S1:eq4}\\
\kappa(\eps) &= \ka_0 + \eps^2 \kappa_2 + \mathcal{O}(\eps^3), \label{S1:eq6}\\
c(\eps) &= c_0 + \eps^2 c_2 + \mathcal{O}(\eps^3), \label{S1:eq5}
\end{align}
where
\begin{align*}
A_2 & = -\frac{3r^2 + 2 \alpha \ka_0^2 (m(2\ka_0)-c_0)}{2(9r^2 + 4\ka_0^2 (m(2\ka_0)-c_0)^2)}, \quad
B_2  = \frac{3 \alpha r \ka_0 - 2 r \ka_0 (m(2\ka_0)-c_0)}{2(9r^2 + 4\ka_0^2 (m(2\ka_0)-c_0)^2)},
\end{align*}
\begin{align*}
\ka_2 &= -\frac{r(1+A_2) + \frac{1}{2}\alpha \ka_0 B_2}{2\ka_0},  \quad
c_2 = \frac{(\alpha\ka_0-rm'(\ka_0) )(1+A_2) - (\frac{1}{2}\alpha \ka_0m'(\ka_0) +2r) B_2}{2\ka_0}.
\end{align*}
\end{theorem}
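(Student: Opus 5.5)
We will recast the problem as a bifurcation problem for the profile and solve it by a Lyapunov--Schmidt reduction. Substituting the ansatz \eqref{S1:eq3} into \eqref{S1:eq1} gives the profile equation
\begin{equation*}
F(w;\kappa,c):=\kappa(\cM_\kappa-c)w'+\alpha\kappa ww'-\kappa^2w''-rw(1-w)=0 .
\end{equation*}
Here $\cM_\kappa$ denotes the Fourier multiplier acting on $2\pi$-periodic functions by $e^{inz}\mapsto m(\kappa n)e^{inz}$. We will take $F:H^{s+2}_{per}\times U\to H^{s}_{per}$, with $s$ large and $U$ a neighbourhood of $(\kappa_0,c_0)$. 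By \ref{assumption2} with $\beta\ge-1$, the term $\kappa\cM_\kappa\partial_z$ has order $\beta+1\le 2$ only when $\beta\le1$. For larger $\beta$ we will instead use the graph space of the linear operator. In either case $F$ is analytic in $(w,\kappa,c)$, because $m$ is analytic away from $0$ by \ref{assumption1} and the nonlinearity is polynomial.

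The linearization at $w=0$ is diagonal in Fourier modes, with symbol
\begin{equation*}
\ell_n(\kappa,c)=\kappa^2n^2-r+i\kappa n\bigl(m(\kappa n)-c\bigr).
\end{equation*}
For $n=0$ we have $\ell_0=-r\neq0$. For $|n|\ge2$ and $\kappa$ near $\sqrt r$, the real part satisfies $\kappa^2n^2-r\ge 3r/2$, and it grows like $n^2$. So $\ell_n$ vanishes only at $n=\pm1$, and this happens exactly when $\kappa^2=r$ and $c=m(\kappa)$, i.e.\ at $(\kappa_0,c_0)$. The kernel is therefore $\mathrm{span}\{\cos z,\sin z\}$. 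The operator $L$ is Fredholm of index zero, and it is boundedly invertible on the complement of the modes $n=\pm1$, uniformly for $(\kappa,c)\in U$. (The real part of $\ell_n$ alone separates the modes, so \ref{assumption3} is not even strictly needed here; we keep it for the later spectral analysis.)

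Translation invariance lets us fix the phase. We write $w=\eps\cos z+\eps v$, where $v$ has no $e^{\pm iz}$ components. Let $P$ be the projection onto the modes $n=\pm1$. The equation $(I-P)F=0$, divided by $\eps$, is solved by the implicit function theorem, giving an analytic $v=v(\eps,\kappa,c)$ with $v=\mathcal O(\eps)$. Projecting onto $e^{iz}$ and dividing by $\eps$ gives one complex (two real) bifurcation equation
\begin{equation*}
g(\eps,\kappa,c)=\ell_1(\kappa,c)+\mathcal O(\eps)=0 .
\end{equation*}
At $\eps=0$ its Jacobian with respect to $(\kappa,c)$ is
\begin{equation*}
\begin{pmatrix}2\kappa_0&0\\ \ast&-\kappa_0\end{pmatrix},
\end{equation*}
which is invertible. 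Hence there are analytic $\kappa(\eps)$ and $c(\eps)$ with $\kappa(0)=\kappa_0$ and $c(0)=c_0$. The symmetry $z\mapsto z+\pi$ maps the solution at $\eps$ to the solution at $-\eps$. By uniqueness, $\kappa$ and $c$ are even in $\eps$, so they have no $\eps^1$ terms. Uniqueness up to translation follows from the uniqueness in the implicit function theorem together with the phase normalization. Smoothness of $w$ follows by bootstrapping, since the symbol grows like $n^2$.

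The coefficients come from expanding order by order. At order $\eps^2$ the equation is $Lw_2=-\alpha\kappa_0w_1w_1'-rw_1^2$ with $w_1=\cos z$. Its constant mode gives $-rw_2^{(0)}=-r/2$, hence the mean $\tfrac12$. The $\cos2z$ and $\sin2z$ modes give a $2\times2$ real linear system with determinant proportional to $9r^2+4\kappa_0^2(m(2\kappa_0)-c_0)^2$; solving it yields $A_2$ and $B_2$. At order $\eps^3$, the solvability condition on the $e^{\pm iz}$ modes involves $\partial_\kappa\ell_1\,\kappa_2+\partial_c\ell_1\,c_2$ and the cubic resonant forcing produced by $w_1$ and $w_2$. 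Its real and imaginary parts then give $\kappa_2$ and $c_2$.

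The main obstacle is the functional-analytic setup for general $\beta\ge-1$. We must check that the range equation is uniformly invertible with analytic dependence on $\kappa$ through $m(\kappa n)$. This relies on the real part of $\ell_n$ dominating and on analyticity of $m$ on compact subsets away from $0$. The one remaining risk is bookkeeping errors in the $\eps^3$ computation.
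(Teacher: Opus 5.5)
Your proposal is correct and follows essentially the same route as the paper: a Lyapunov--Schmidt reduction at $(\kappa_0,c_0)$ with kernel $\operatorname{span}\{\cos z,\sin z\}$, the implicit function theorem for the range equation, and division of the bifurcation equation by $\varepsilon$ with the same nondegenerate Jacobian (up to your overall sign convention for $F$). It then proceeds by order-by-order Fredholm solvability at $\mathcal{O}(\varepsilon^2)$ and $\mathcal{O}(\varepsilon^3)$, and once carried out, your description of that step reproduces the stated $A_2,B_2,\kappa_2,c_2$.

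The differences are minor. You rescale the complementary part as $\varepsilon v$ instead of proving $\partial_\varepsilon v^*(0,0,0)=\partial_{\tilde\kappa}v^*(0,0,0)=\partial_{\tilde c}v^*(0,0,0)=0$. You obtain $\kappa_1=c_1=0$, and indeed evenness of $\kappa,c$ in $\varepsilon$, from the half-period shift $z\mapsto z+\pi$ rather than from the $\mathcal{O}(\varepsilon^2)$ solvability condition; this argument is valid. Your remark that (A3) is not needed for the kernel computation is also correct.
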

\cref{S1:thm1} provides a complete local description of the bifurcating periodic waves, including explicit asymptotic expansions for the profile, wave speed, and wavenumber. A natural question is whether these waves persist under small perturbations of the governing
partial differential equation. Our second main result shows that this is not the case: the periodic waves constructed above are spectrally unstable with respect to localized perturbations.
\vspace{10pt}
\begin{theorem}[Spectral Instability]\label{S1:thm2}
For sufficiently small $\eps$, the periodic traveling wave $w^\eps$ of \eqref{S1:eq1} obtained in \cref{S1:thm1} is spectrally unstable with respect to perturbations in $L^2(\R)$. 
\end{theorem}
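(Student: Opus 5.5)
We would prove \cref{S1:thm2} by exhibiting, for all small $\eps$, an eigenvalue with positive real part of the Bloch operator at the periodic Floquet exponent $\xi=0$. This eigenvalue comes from the instability of the constant state $u=0$ under the Fisher--KPP source. The waves of \cref{S1:thm1} bifurcate from $0$, and there the reaction term contributes $+r$.

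\textbf{Linearization and Bloch reduction.} In the moving frame $z=\ka(x-ct)$, linearizing \eqref{S1:eq1} about $w^\eps$ gives
\begin{equation*}
\cL^\eps v = \ka^2 v_{zz} - \ka(\cM_\ka - c) v_z - \alpha\ka (w^\eps v)_z + r(1-2w^\eps) v ,
\end{equation*}
where $\cM_\ka$ has symbol $m(\ka k)$ and $\ka=\ka(\eps)$, $c=c(\eps)$. The coefficients are $2\pi$-periodic, so Floquet--Bloch theory gives
\begin{equation*}
\sigma_{L^2(\R)}(\cL^\eps)=\bigcup_{\xi\in(-1/2,1/2]}\sigma_{L^2_{per}}(\cL^\eps_\xi),\qquad \cL^\eps_\xi=e^{-i\xi z}\cL^\eps e^{i\xi z}.
\end{equation*}
Hence it suffices to find one $\xi$, and we take $\xi=0$, for which $\cL^\eps_0$ on $L^2(\mathbb T)$ has an eigenvalue $\lambda(\eps)$ with $\operatorname{Re}\lambda(\eps)>0$.

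\textbf{Step 1: the unperturbed spectrum.} At $\eps=0$ we have $w^0=0$, $\ka=\ka_0=\sqrt r$ and $c=c_0=m(\sqrt r)$. The operator $\cL^0_0$ is then diagonal in the Fourier basis $e^{inz}$, with eigenvalues
\begin{equation*}
\lambda_n = r - r n^2 - i\sqrt r\, n\bigl(m(\sqrt r\, n)-m(\sqrt r)\bigr),\qquad n\in\Z .
\end{equation*}
These give three cases.
\begin{itemize}
\item For $n=0$, $\lambda_0=r>0$.
\item For $n=\pm1$, $\lambda_{\pm1}=0$; these are the translation and bifurcation modes.
\item For $|n|\ge2$, $\operatorname{Re}\lambda_n = r(1-n^2)\le -3r$.
\end{itemize}
Thus $\lambda_0=r$ is a simple eigenvalue isolated from the rest of the spectrum by a distance at least $r$. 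We fix the circle $\Gamma=\{|\lambda-r|=r/2\}$, which lies in the right half-plane, and note that $\sup_{\lambda\in\Gamma}\|(\cL^0_0-\lambda)^{-1}\|$ is finite. This resolvent bound can be read off the Fourier multipliers, since the factor $r-rn^2$ dominates for large $|n|$.

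\textbf{Step 2: perturbation.} We write $\cL^\eps_0=\cL^0_0+\cB^\eps$. We then show that $\|\cB^\eps(\cL^0_0-\lambda)^{-1}\|_{L^2\to L^2}=\mathcal O(\eps)$ uniformly for $\lambda\in\Gamma$. The terms of $\cB^\eps$ split into three groups.
\begin{itemize}
\item The terms $-\alpha\ka(w^\eps v)_z - 2rw^\eps v$ are of order at most one, with $\mathcal O(\eps)$ smooth coefficients by \eqref{S1:eq4}. They are therefore bounded relative to $\partial_z^2$ with bound $\mathcal O(\eps)$.
\item The change $(\ka^2-\ka_0^2)\partial_z^2$ is $\mathcal O(\eps^2)$ relative to $\partial_z^2$, by \eqref{S1:eq6}.
\item The change in the dispersive term has symbol $i k\bigl[\ka(m(\ka k)-c)-\ka_0(m(\ka_0 k)-c_0)\bigr]$.
\end{itemize}
The third group is the one delicate point. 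We plan to bound it using (A1)--(A2) and the mean value theorem on $\R\setminus\{0\}$. The goal is to show it is $\mathcal O(\eps^2)$ times the symbol of $\cL^0_0-\lambda$ when $\beta\le1$, and times the dispersive part $|k\, m(\ka_0 k)|$ when $\beta>1$. The latter is itself controlled by $|\lambda_k-\lambda|$, because the real and imaginary parts of $\lambda_k$ are orthogonal components. Should this bound need $m'$ estimates not literally contained in (A2), we would instead compare $\cL^\eps_0$ with $\cL^0_0$ after rescaling, keeping the exact dispersive symbol $\ka(\eps)(m(\ka(\eps)k)-c(\eps))$ inside the unperturbed operator. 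That operator is still diagonal, with the same eigenvalue structure up to $\mathcal O(\eps^2)$, and only the $\mathcal O(\eps)$ terms of the first group would remain to perturb. We expect this relative-smallness estimate to be the main obstacle.

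\textbf{Step 3: conclusion.} Once the resolvent estimate holds, a Neumann series shows that $\Gamma$ lies in the resolvent set of $\cL^\eps_0$. Moreover, the Riesz projection $\frac{1}{2\pi i}\oint_\Gamma(\lambda-\cL^\eps_0)^{-1}\,d\lambda$ depends continuously on $\eps$, so it keeps rank one. Therefore $\cL^\eps_0$ has exactly one eigenvalue $\lambda(\eps)$ inside $\Gamma$, with $|\lambda(\eps)-r|<r/2$, and in fact $\lambda(\eps)=r+\mathcal O(\eps)$ by Kato's perturbation theory. In particular $\operatorname{Re}\lambda(\eps)>r/2>0$. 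By the Bloch decomposition, $\lambda(\eps)$ lies in the $L^2(\R)$ spectrum of $\cL^\eps$, which gives the spectral instability claimed in \cref{S1:thm2}.
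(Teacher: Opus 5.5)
Your proposal is correct and follows essentially the paper's route: pass to the Bloch operator at $\tau=0$ (your $\xi=0$), use the diagonal unperturbed operator with its simple isolated eigenvalue $r$, show the perturbation is $\mathcal{O}(\eps)$ relative to it, and trap a perturbed eigenvalue inside $\Gamma=\{|\lambda-r|=r/2\}$. The only difference in the last step is packaging: the paper uses Kato's gap estimate and continuity of finite systems of eigenvalues, while you use the equivalent Neumann-series and Riesz-projection argument. Your fallback of keeping the exact symbol $\ka(\eps)\bigl(m(\ka(\eps)k)-c(\eps)\bigr)$ inside a still-diagonal unperturbed operator is actually more careful than the paper's estimate \eqref{S5:F}, which tacitly needs a large-frequency bound on $m'$ that (A1)--(A2) do not supply.
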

Since the hypotheses of Theorems \ref{S1:thm1} and \ref{S1:thm2} are satisfied by a broad class of dispersive symbols, the existence and instability theory developed here applies directly to dissipative-reaction counterparts of several classical nonlinear wave equations, including the KdV, Whitham, Benjamin--Ono, fractional KdV, Intermediate Long Wave, and Kawahara equations. Specific
corollaries for these models are presented in \S\ref{section6}.

\subsection{Relevance and Comparison with Existing Literature}
The first rigorous analysis of periodic traveling waves in a dispersive-dissipative equation with a Fisher--KPP source appears to be the recent work of Folino, Naumkina, and Plaza \cite{Folino2024}, who considered the Korteweg--de Vries--Burgers--Fisher equation. They proved the existence of small-amplitude periodic traveling waves bifurcating from a constant equilibrium through a local Hopf mechanism and established their spectral instability using Floquet theory and perturbation arguments. Owing to the local nature of the KdV dispersion operator, the associated traveling wave equation can be
reduced to a finite-dimensional ordinary differential equation, making available the tools of dynamical systems and local bifurcation theory.

The present work extends this theory far beyond the classical KdV setting. Rather than focusing on a single dispersive equation, we consider a broad class of dispersive symbols $m(\xi)$ encompassing local, nonlocal, fractional, and higher-order dispersive models. This includes dissipative-reaction counterparts of the KdV, Whitham, Benjamin--Ono, fractional KdV, Intermediate Long Wave, and Kawahara equations. To our knowledge, there are currently no general existence and instability results for periodic traveling waves in dispersive-dissipative equations with monostable reaction terms at this level of generality. This transition from a local to a nonlocal framework renders the standard dynamical systems techniques entirely inapplicable and creates severe technical challenges:
\begin{itemize}
\item The pseudodifferential operator $\cM$ prevents the reduction of the traveling-wave equation to a finite-dimensional dynamical system. Consequently, classical phase-plane methods, center-manifold theory, and standard Hopf bifurcation arguments are no longer available. 
\item The linearized profile operator is genuinely nonlocal and acts on periodic Sobolev spaces. Establishing its Fredholm properties, kernel structure, and bifurcation geometry requires a careful Fourier multiplier analysis that depends explicitly on the properties of the symbol $m(\xi)$. 
\item The existence theory must therefore be developed entirely within an infinite-dimensional functional-analytic framework. This necessitates a constructive Lyapunov--Schmidt reduction adapted to nonlocal operators together with explicit asymptotic expansions of the bifurcating wave family. 
\item The spectral problem is likewise nonlocal and must be analyzed through Floquet--Bloch decomposition and perturbation theory for families of pseudodifferential operators. 
\end{itemize}
By establishing the existence and spectral instability of small-amplitude periodic traveling waves under general assumptions on the dispersion symbol, we show that the destabilizing influence of a monostable Fisher--KPP source persists across a remarkably broad class of dispersive systems. Our results recover the Korteweg--de Vries--Burgers--Fisher theory of Folino et al.~\cite{Folino2024} as a special case while simultaneously providing a unified framework for nonlocal, fractional, and higher-order dispersive equations. In this sense, the present work identifies a common instability mechanism generated by the interplay of dispersion, Burgers dissipation, and monostable reaction dynamics.

The remainder of the paper is organized as follows.
\S\ref{Section2} introduces notation and preliminary results. \S\ref{section3} develops the periodic traveling wave formulation and proves \cref{S1:thm1} via a Lyapunov--Schmidt reduction.
\S\ref{section4} formulates the spectral problem and derives the associated
Floquet--Bloch decomposition.
\S\ref{section5} establishes \cref{S1:thm2} using perturbation theory for the Bloch operators. Finally, \S\ref{section6} discusses applications of the theory to several classical dispersive models. Auxiliary results are collected in an appendix.

\section{Preliminaries}\label{Section2}
Let $X$ and $Y$ be Banach spaces, then $B(X,Y)$ is the space of all bounded linear operators from $X$ to $Y$. In particular, $B(X)$ is the space of all bounded linear operators from $X$ to $X$.

Let $p\in[1,\infty]$, then $L^{p}(\R)$ denotes the space of all Lebesgue measurable, real or complex-valued functions over $\R$ such that 
\begin{equation*}
\|f\|_{L^{p}(\R)}\coloneqq\left(\int\limits_{-\infty}^{\infty}|f(x)|^{p}\, dx\right)^{\frac{1}{p}}<\infty \quad \text{if} \quad p<\infty,
\end{equation*}
and $\|f\|_{L^{\infty}(\R)}\coloneqq\text{ess sup}_{x\in\R}|f(x)|<\infty$ if $p=\infty$.

Let $p\in[1,\infty]$, then $L^{p}_{\text{per}}(\mathbb{T})$ denotes the space of all $2\pi$-periodic, Lebesgue measurable, real or complex-valued functions over the one-dimensional Torus $\mathbb{T} = \mathbb{R}/2\pi\mathbb{Z}$ such that 
\begin{equation*}
\|f\|_{L^{p}_{\text{per}}(\mathbb{T})}\coloneqq\left(\frac{1}{2\pi}\int\limits_{0}^{2\pi}|f(x)|^{p}\, dx\right)^{\frac{1}{p}}<\infty \quad \text{if} \quad p<\infty,
\end{equation*}
and $\|f\|_{L^{\infty}_{\text{per}}(\mathbb{T})}\coloneqq\text{ess sup}_{x\in[0,2\pi]}|f(x)|<\infty$ if $p=\infty$.

The space $C^{\infty}(\R)$ is the space of all smooth (infinitely-differentiable) functions defined on $\R$. Also, $C^{\infty}_{\text{per}}(\mathbb{T})$ is the space of smooth  and $2\pi$-periodic functions.

The Schwartz Space $\mathcal{S}(\R)$ is the space of rapidly decreasing functions defined by
\begin{equation*}
\left\{f\in C^{\infty}(\R)\Bigm|\lim\limits_{|x|\to\infty}|x^k  f^{(j)}(x)| =0 \text{ for all integers } k, j\right\}.
\end{equation*}

Let $m>0$ be an integer and let $1\leq p \leq \infty$ then we define
\begin{equation*}
W^{m,p}(\R)\coloneqq\{f\in L^{p}(\R)\,|f^{(j)}\in L^{p}(\R) \text{ for all integers } j,1\leq j\leq m\}.
\end{equation*}

For $s\in\R$, 
\begin{equation*}
H^{s}(\R)\coloneqq\left\{f\in \mathcal{S}'(\R)\,\Bigm|\left(1+|\xi|^{2}\right)^\frac{s}{2}\widehat{f}(\xi)\in L^{2}(\R)\right\},
\end{equation*}
such that,
\begin{equation*}
\|f\|_{H^{s}(\R)}\coloneqq\left(\int\limits_{-\infty}^{\infty}\left(1+|\xi|^{2}\right)^s|\widehat{f}(\xi)|^{2}\, d\xi\right)^{\frac{1}{2}}<\infty,
\end{equation*}
and $H^{\infty}(\R)\coloneqq\bigcap\limits_{s\in\mathbb{N}\cup\{0\}}H^{s}(\R)$ and where $\mathcal{S}'(\R)$ is the space of all tempered distributions on $\R$.

For $s\in\R$, \begin{equation*}
H^{s}_{\text{per}}(\mathbb{T})\coloneqq\left\{f\in \mathcal{D}'_{\text{per}}(\mathbb{T})\,\Big| \sum_{n\in \Z}\left(1+|n|^{2}\right)^\frac{s}{2}\widehat{f}(n)<\infty\right\},
\end{equation*}
such that,
\begin{equation*}
\|f\|^{2}_{H^{s}_{\text{per}}(\mathbb{T})}\coloneqq\sum\limits_{n\in\mathbb{Z}}(1+|n|^{2})^{s}|\widehat{f}(n)|^{2},
\end{equation*}
and $H^{\infty}_{\text{per}}(\mathbb{T})\coloneqq\bigcap\limits_{s\in\mathbb{N}\cup\{0\}}H^{s}_{\text{per}}(\mathbb{T})$ and where $\mathcal{D}'_{\text{per}}(\mathbb{T})$ is the space of periodic distributions.

Let H be a separable Hilbert space and $\langle X,\mu\rangle$ be a measure space. Then $L^2(X;H)$ is the
Hilbert space of square integrable $H$-valued functions with norm defined as 
\begin{equation*}
\|f\|^{2}_{L^2(X;H)}\coloneqq\int_{X}\|f(x)\|_{H}^{2}\,d\mu(x).
\end{equation*}

We consider the orthonormal basis of $L^{2}_{\text{per}}(\mathbb{T})$ as $\left\{e^{inz}:n\in\Z\right\}$. Any $f\in L^{1}_{\text{per}}(\mathbb{T})$ is expressed in the Fourier series as 
\begin{equation*}
f(z)\sim\sum\limits_{n\in\Z}\widehat{f}(n)e^{inz}, \quad \text{where } \widehat{f}(n)\coloneqq\frac{1}{2\pi}\int\limits_{0}^{2\pi}f(z)e^{-inz}\, dz.
\end{equation*}
If $f\in L^{p}_{\text{per}}(\mathbb{T})$, $p>1$, the Fourier series converges to $f$ pointwise almost everywhere. If $f\in$ $H^{s}_{\text{per}}(\mathbb{T})$, then the Fourier series is absolutely convergent to $f$, provided $s>\frac{1}{2}$.

We define the $L^{2}_{\text{per}}(\mathbb{T})$-inner product as 
\begin{equation*}
\langle f,g\rangle_{L^{2}_{\text{per}}(\mathbb{T})}\coloneqq\frac{1}{2\pi}\int\limits_{0}^{2\pi}f(z)\overline{g}(z)\, dz=\sum\limits_{n\in\Z}\widehat{f}(n)\overline{\widehat{g}(n)}.
\end{equation*}
For $f\in L^{1}(\R)$ we define its Fourier transform as 
\begin{equation*}
\widehat{f}(\xi)\coloneqq\frac{1}{2\pi}\int\limits_{-\infty}^{\infty}f(x)e^{-i\xi x}\,dx,
\end{equation*}
and we define the inverse Fourier transform (given $\widehat{f}\in L^{1}(\R)$) as 
\begin{equation*}
f(x)\coloneqq\int\limits_{-\infty}^{\infty}\widehat{f}(\xi)e^{i\xi x}\,d\xi.
\end{equation*}
Let $f(x)$ and $g(x)$ be two functions then $f(x)$ is said to be $\mathcal{O}(g(x))$ if there exist positive constants $C$ and $x_0$ such that
\begin{equation*}
    |f(x)|\leq C |g(x)| \quad \text{for all } x\geq x_0.
\end{equation*}
\begin{theorem}[Analytic Implicit Function Theorem]\label[theorem]{S2:thm1}
Let $X, Y$ and $Z$ be Banach spaces, and let $U \subset X \times Y$ be an open neighborhood of a point $(x_0, y_0)$. Suppose that a mapping $F: U \to Z$ is analytic with $F(x_0, y_0) = 0$ and the partial Fr\'echet derivative $\partial_x F(x_0, y_0) : X \to Z$ is a bounded linear isomorphism.
Then, there exist neighborhoods $V \subset X$ of $x_0$ and $W \subset Y$ of $y_0$, and a unique analytic mapping $x^*: W \to V$ such that $F(x^*(y), y) = 0$ for all $y \in W$. 
\end{theorem}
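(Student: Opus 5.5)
We may translate so that $(x_0,y_0)=(0,0)$ and replace $F$ by $L^{-1}F$, where $L:=\partial_x F(0,0)$. Since $L^{-1}\in B(Z,X)$ is an isomorphism, this preserves analyticity and the zero set. We may therefore assume $Z=X$ and $\partial_xF(0,0)=I$. The plan has two parts. First we construct $x^*$ by a contraction argument, which gives existence, uniqueness and continuity. Then we upgrade to analyticity by writing $x^*$ as a uniform limit of analytic iterates.

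\textbf{Step 1 (contraction).} Define $G(x,y):=x-F(x,y)$. Then $G$ is analytic, $G(0,0)=0$ and $\partial_xG(0,0)=0$. By continuity of $\partial_xG$ (analytic maps are $C^1$), choose $\rho,\delta>0$ such that $\|\partial_xG(x,y)\|\le \tfrac12$ for $\|x\|\le\rho$ and $\|y\|<\delta$. Shrinking $\delta$ further, we can also ensure $\|G(0,y)\|\le \rho/2$. The mean value inequality then shows that $G(\cdot,y)$ maps the closed ball $\overline{B}_\rho\subset X$ into itself and is a $\tfrac12$-contraction there, uniformly in $y\in W:=B_\delta$. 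Banach's fixed point theorem gives a unique $x^*(y)\in V:=B_\rho$ with $F(x^*(y),y)=0$. Moreover, the iterates
\[
x_0(y):=0,\qquad x_{n+1}(y):=G(x_n(y),y)
\]
satisfy $\sup_{y\in W}\|x_n(y)-x^*(y)\|\le 2^{-n}\rho$. This yields uniqueness in $V$ and shows that $x^*$ is the uniform limit of the $x_n$.

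\textbf{Step 2 (analyticity).} Each $x_n$ is analytic on $W$, because compositions of analytic maps between Banach spaces are analytic. In the complex case (complex Banach spaces, $F$ holomorphic) we conclude directly: a uniform limit of locally bounded holomorphic Banach-valued maps is holomorphic. To see this, apply the Cauchy integral formula on complex lines; it passes to the uniform limit, so the limit is G\^ateaux-holomorphic and locally bounded, hence Fr\'echet holomorphic and analytic.

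In the real-analytic case we first complexify $X,Y,Z$ to $X_\C,Y_\C,Z_\C$. The power series of $F$ at each point converge on small complex balls, so $F$ extends to a holomorphic map $F_\C$ on a neighbourhood of $(0,0)$ in $X_\C\times Y_\C$. Its derivative $\partial_xF_\C(0,0)$ is the complexification of $I$, still the identity. Running Step~1 for $F_\C$ produces a holomorphic $x^*_\C$ on a complex ball around $0$. Its restriction to real $y$ is a solution lying in the real ball, since the iterates of $G_\C$ stay real when $y$ is real. By uniqueness this restriction equals $x^*$, so $x^*$ is real analytic. Shrinking $V,W$ if necessary completes the argument.

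\textbf{Main obstacle.} The delicate point is Step~2 in the real setting. One must justify that a real-analytic $F$ on an open set of a real Banach space extends holomorphically to a uniform complex neighbourhood of $(0,0)$ on which the contraction estimates of Step~1 still hold. I would handle this by working only near the single point $(0,0)$. There the power series $\sum_k A_k(x,y)^{\otimes k}$ has a positive radius of convergence, and the norm of the complexified $k$-linear maps is at most $2^k\|A_k\|$. Hence the complexified series converges on a complex ball of half the radius, which suffices. If this becomes cumbersome, a fallback is the classical majorant-series argument: substitute a formal power series for $x^*$, solve recursively for its homogeneous terms using $\partial_xF(0,0)=I$, and bound the coefficients by those of a scalar majorant equation solvable by the one-variable implicit function theorem.
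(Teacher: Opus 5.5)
Your argument is correct. Note that the paper gives no proof of this theorem at all; it only cites Buffoni--Toland. So your proposal supplies what the paper omits.

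The standard textbook argument also reduces the real case to the complex one by complexifying $F$ near $(x_0,y_0)$, but it gets holomorphy of the solution map in a different way. One first applies the ordinary $C^1$ implicit function theorem to obtain a $C^1$ map $x^*$ with $Dx^*(y)=-[\partial_xF(x^*(y),y)]^{-1}\partial_yF(x^*(y),y)$. In the complex setting this derivative is complex-linear, so $x^*$ is Fr\'echet holomorphic and hence analytic, and one then restricts to real points using uniqueness.

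You instead realize $x^*$ as the uniform limit of the holomorphic Picard iterates $x_n$. This way a single contraction estimate gives existence, uniqueness, continuity and analyticity at once, and you never need differentiability of $x^*$ or the derivative formula. The price is the fact that a locally uniform limit of holomorphic maps (equivalently, a locally bounded G\^ateaux-holomorphic map) is Fr\'echet holomorphic and analytic. This is essentially the same Cauchy-integral input the textbook route uses in the form ``complex-differentiable implies analytic''.

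Both routes rest on the same local complexification of a real-analytic map. Your bound $\|A_k^{\mathbb{C}}\|\le 2^k\|A_k\|$ is the right tool there, provided the complexified norms restrict to the original ones on real vectors and dominate real and imaginary parts (e.g.\ $\|x+iy\|=\sup_{\theta}\|x\cos\theta+y\sin\theta\|$).

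Finally, the last step of your real case is cleanest if you write $x^*=\operatorname{Re}\circ (x^*_{\mathbb{C}}|_{Y})$. This is a bounded real-linear map composed with the restriction of a holomorphic map to the real subspace, so it is real analytic into $X$, not merely into $X_{\mathbb{C}}$.
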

\begin{proof}
Refer \citep{Buffoni2003}.
\end{proof}
\begin{theorem}[Analytic Lyapunov--Schmidt Reduction]\label[theorem]{S2:thm2}
Let $X, Y$ and $\Lambda$ be Banach spaces (where $\Lambda$ represents parameter space). Let $F$ be an analytic mapping from $X \times \Lambda \to Y$. Let $U\subset X$ and $V\subset \Lambda$ be open where $F(x_{0}, \lambda_{0})=0$ for some $(x_{0}, \lambda_{0})\in U\times V$. Suppose the Fr\'echet derivative $L_0=\partial_{x}F(x_{0}, \lambda_{0})$ is a Fredholm operator of index zero and $\ker(L_0)\neq \left\{0\right\}$. Let $X = \ker(L_0) \oplus X_2$ and $Y=\operatorname{range}(L_0) \oplus Y_2$, where $Y_2 \cong \ker(L_0)$. Let $\Pi: Y \to Y_2$ be a continuous projection. Then there exists a neighbourhood $U_{2}\times V_{2}$ of $(x_{0}, \lambda_{0})$ in $U\times V \subset X\times \Lambda$ such that the problem $$F(x,\lambda)=0 \quad \text{for} \quad (x,\lambda)\in U_{2}\times V_{2},$$ is equivalent to the finite-dimensional problem 
\begin{equation*}
\Phi(u, \lambda) \coloneqq \Pi F(u + v^*(u, \lambda), \lambda) = 0,
\end{equation*}
where $u \in \ker(L_0)$, and $v^*(u, \lambda) \in X_2$ is a uniquely determined analytic function mapping a neighborhood of $(x_0, \lambda_0)$ into $X_2$.
\end{theorem}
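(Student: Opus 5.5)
We derive the reduction directly from the Analytic Implicit Function Theorem, \cref{S2:thm1}. The argument splits the equation $F=0$ along the decomposition $Y=\operatorname{range}(L_0)\oplus Y_2$ and solves the infinite-dimensional component for the $X_2$-part of the unknown. To match the statement, we translate so that $x_0$ is the base point and write the unknown as $x = x_0 + u + v$ with $u\in\ker(L_0)$ and $v\in X_2$ (when $x_0=0$, as in the application to the trivial equilibrium, this is literally $x=u+v$).

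First we make the splittings topological. Since $L_0$ is Fredholm, $\ker(L_0)$ is finite dimensional, so it has a closed complement $X_2$ (via Hahn--Banach applied to a basis of the kernel). Likewise $\operatorname{range}(L_0)$ is closed with finite codimension, so any algebraic complement $Y_2$ is finite dimensional and closed. Index zero gives $\dim Y_2=\dim\ker(L_0)$, i.e.\ $Y_2\cong\ker(L_0)$. We take $\Pi$ to be the continuous projection onto $Y_2$ with kernel $\operatorname{range}(L_0)$, and set $Q=I-\Pi$, the continuous projection onto $\operatorname{range}(L_0)$. Because $\Pi$ and $Q$ are complementary, $F(x,\lambda)=0$ holds if and only if both
\begin{equation*}
QF(x_0+u+v,\lambda)=0 \quad\text{and}\quad \Pi F(x_0+u+v,\lambda)=0 .
\end{equation*}

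The key step is to solve the first equation. Define
\begin{equation*}
G(v;u,\lambda) \coloneqq QF(x_0+u+v,\lambda), \qquad G:\; X_2\times\bigl(\ker(L_0)\times\Lambda\bigr)\to \operatorname{range}(L_0).
\end{equation*}
Here $\operatorname{range}(L_0)$ is a Banach space because it is closed. The map $G$ is analytic, since it is the composition of a bounded linear map with the analytic map $F$ and an affine map. It satisfies $G(0;0,\lambda_0)=0$, and its partial derivative at this point is
\begin{equation*}
\partial_v G(0;0,\lambda_0)=QL_0|_{X_2}=L_0|_{X_2}.
\end{equation*}
This restriction is injective because $X_2\cap\ker(L_0)=\{0\}$, and surjective onto $\operatorname{range}(L_0)$ because $L_0$ annihilates the kernel component. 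The open mapping theorem then makes it a bounded isomorphism. \cref{S2:thm1}, applied with the parameter $y=(u,\lambda)$, now yields neighborhoods of $0\in X_2$ and of $(0,\lambda_0)$, together with a unique analytic map $v^*(u,\lambda)$ such that $G(v^*(u,\lambda);u,\lambda)=0$.

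Finally we establish the equivalence. Choose $U_2\times V_2$ small enough that every $x\in U_2$ decomposes as $x_0+u+v$ with $(u,v)$ lying in the neighborhoods produced above. By the uniqueness clause of \cref{S2:thm1}, any solution of $F(x,\lambda)=0$ in $U_2\times V_2$ must have $v=v^*(u,\lambda)$. Its remaining condition is then exactly $\Phi(u,\lambda)=\Pi F(x_0+u+v^*(u,\lambda),\lambda)=0$. Conversely, any zero of $\Phi$ gives a zero of $F$, since the $Q$-component vanishes by construction. The function $\Phi$ is analytic as a composition of analytic maps, and it takes values in the finite-dimensional space $Y_2\cong\ker(L_0)$.

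The main obstacle is bookkeeping rather than depth. One must ensure that the complements are closed and the projections continuous, so that $\operatorname{range}(L_0)$ and $X_2$ are Banach spaces and \cref{S2:thm1} applies. One must also check that the neighborhoods from the implicit function theorem can be shrunk consistently, so that the uniqueness of $v^*$ really gives the claimed equivalence on a product neighborhood.
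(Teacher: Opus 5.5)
Your proposal is correct. It is the standard argument: split $F=0$ with $\Pi$ and $I-\Pi$, solve the range equation for the $X_2$-component by the implicit function theorem, and substitute into the bifurcation equation. This is the proof in Kielh\"ofer, to which the paper simply defers without giving its own proof. One point to tighten: your equivalence step needs the local-uniqueness form of the implicit function theorem (near the base point the zero set of $G$ is exactly the graph of $v^*$). That form is standard, but it is slightly stronger than the uniqueness-of-the-map clause as \cref{S2:thm1} is worded in the paper.
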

\begin{proof}
Refer \citep{Kielhofer2004}.
\end{proof}

\section{Existence and asymptotics of Periodic Traveling Waves}\label{section3}
We aim to show the existence of small-amplitude periodic traveling wave solutions to \eqref{S1:eq1}. To achieve this we first substitute \eqref{S1:eq3} in \eqref{S1:eq1} which yields
\begin{equation}\label{S3:eq4}
-\ka c w'(z) + \ka \mathcal{M}_{\ka} w'(z) + \alpha \ka w(z) w'(z) = \ka^{2} w''(z) + r w(z)(1-w(z)),
\end{equation}
where $'=\frac{d}{dz}$. Here the bounded linear operator \begin{equation*}
\mathcal{M}: H^s(\mathbb{R}) \to H^{s-\beta}(\mathbb{R}) \quad \text{for all } s \geq 0,
\end{equation*}
defined via its Fourier transform on $\R$ as in \eqref{S1:eq2} is transformed to the following bounded linear operator on $\mathbb{T}$
\begin{equation*}
\cM_{\ka}:H^s_{\text{per}}(\mathbb{T})\to H^{s-\beta}_{\text{per}}(\mathbb{T}) \quad \text{for all }\ka>0 \text{ and for all } s\geq 0,
\end{equation*}
defined as
\begin{equation*}
\cM_{\ka}e^{inz}=m(\ka n)e^{inz} \quad \text{for } n\in\Z\setminus\left\{0\right\},
\end{equation*}
with $\cM_{\ka}(1)=m(0)$, see \cref{A} for a proof.

\subsection{Defining the Nonlinear Operator}
From \eqref{S3:eq4} we define the nonlinear operator $F$ as
\begin{equation}\label{S3:eq7}
F(w, \ka, c) \coloneqq \ka^{2} w'' - \ka(\cM_{\ka} - c)w' - \alpha \ka w w' + r w(1-w).
\end{equation}
The linear portion of $F$ involves the viscous term $\ka^{2} w''$ (order $2$) and the dispersive term $\cM_{\ka}w'$ (which, by the growth condition in Assumption \ref{assumption2}, is of pseudodifferential order $\beta + 1$). Thus, the maximal pseudodifferential order of the equation is given by $\mu := \max\{2, \beta+1\}$. Therefore, we define our domain space $X$ and co-domain space $Y$ as
\begin{equation}\label{S3:eq8}
X = H^{s+\mu}_{\text{per}}(\mathbb{T}) \quad \text{and} \quad Y = H^s_{\text{per}}(\mathbb{T}) \quad \text{for } s > 1/2.
\end{equation}
The restriction $s > 1/2$ ensures that $H^s_{\text{per}}(\mathbb{T})$ forms a Banach algebra. Hence, $$F:X\times \R^{+}\times \R\to Y$$ and we want to find $w$, $\ka$ and $c$ such that
\begin{equation}\label{S3:eq9}
F(w,\ka,c)=0.
\end{equation}
\begin{lemma}[The operator $F$ is analytic]\label[lemma]{S3:analytic}
Under Assumptions $\mathrm{\ref{assumption1}}$ and $\mathrm{\ref{assumption2}}$, the mapping $F: X \times \R^+ \times \R \to Y$ is well-defined and is analytic.
\end{lemma}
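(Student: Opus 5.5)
Our plan is to split $F$ into its pieces and verify well-definedness and analyticity for each. We write
\begin{equation*}
F(w,\ka,c) = \ka^2 w'' + c\ka w' - \ka \cM_\ka w' + r w - \alpha\ka\, w w' - r w^2 .
\end{equation*}
The pieces $\ka^2 w''$, $c\ka w'$ and $rw$ are products of polynomials in the scalars $(\ka,c)$ with bounded linear maps $X\to Y$. Indeed, $\partial_z^j: H^{s+\mu}_{\text{per}}\to H^{s+\mu-j}_{\text{per}}\hookrightarrow H^s_{\text{per}}$ for $j\le 2\le\mu$. Such maps are analytic on $X\times\R^+\times\R$, since they are continuous multilinear forms composed with polynomials.

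The nonlinear pieces are handled by the algebra property. Because $s>1/2$, the space $H^s_{\text{per}}(\mathbb{T})$ is a Banach algebra, so $(f,g)\mapsto fg$ is a bounded bilinear map $Y\times Y\to Y$. We have $w\in X\hookrightarrow Y$ and $w'\in H^{s+\mu-1}_{\text{per}}\hookrightarrow Y$ because $\mu\ge 2$. Hence $w\mapsto ww'$ and $w\mapsto w^2$ are continuous quadratic forms $X\to Y$, and therefore analytic. Multiplying by the analytic scalar $\alpha\ka$ (resp.\ $r$) keeps them analytic.

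The main obstacle is the dispersive term $(\ka,w)\mapsto \ka\cM_\ka w'$, because the operator itself depends on $\ka$ through $m(\ka n)$.

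\textbf{Boundedness for fixed $\ka$.} On Fourier modes, $\ka\cM_\ka w'$ acts as multiplication by $i\ka n\, m(\ka n)$. By \ref{assumption1}, $m$ is continuous, hence bounded on the compact set $|\xi|\le M$. By \ref{assumption2}, $|m(\xi)|\le C_2|\xi|^\beta$ for $|\xi|>M$. Combining these, $|\ka n\, m(\ka n)|\le C(\ka)(1+|n|)^{\beta+1}$ with $C(\ka)$ locally bounded in $\ka$. Since $\beta+1\le\mu$, this gives a bounded map $X\to H^{s+\mu-\beta-1}_{\text{per}}\hookrightarrow Y$. This is the same bound as in the appendix result giving $\cM_\ka: H^s_{\text{per}}\to H^{s-\beta}_{\text{per}}$.

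\textbf{Analyticity in $\ka$.} It suffices to show that $\ka\mapsto T(\ka):=\ka\cM_\ka\partial_z\in B(X,Y)$ is analytic near each $\ka_*>0$. The composite $(w,\ka,c)\mapsto T(\ka)w$ is then analytic, being an evaluation (a bounded bilinear map) applied to analytic arguments. The mode $n=0$ contributes nothing because of the factor $n$. For $n\neq0$, $\ka n$ lies in $\R\setminus\{0\}$, where $m$ is analytic by \ref{assumption1}.

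Here we would expand each diagonal entry $\ka n\,m(\ka n)$ in powers of $(\ka-\ka_*)$. We need a uniform bound of the type $|\partial_\ka^k(\ka n\, m(\ka n))|\le C R^{-k}k!\,(1+|n|)^{\mu}$ for $\ka$ near $\ka_*$. Given this bound, the power series converges in $B(X,Y)$ for $|\ka-\ka_*|<R$.

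To obtain the bound, we would first try Cauchy estimates on a complex disc of radius $\rho|\ka_*|$ around each $\ka_* n$, rescaled by $n$. Writing $\partial_\ka^k[m(\ka n)]=n^k m^{(k)}(\ka n)$, the Cauchy estimate on a disc of radius $\rho\ka_*|n|$ gives $|m^{(k)}(\ka n)|\le k!\,(\rho\ka_*|n|)^{-k}\sup|m|$ over that disc. The powers $n^k$ then cancel, leaving $k!\,(\rho\ka_*)^{-k}$ times the supremum.

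Making this rigorous requires that $m$ extend holomorphically to a sector around $\R\setminus\{0\}$ with the growth bound $|\xi|^\beta$ persisting there. This holds for all the model symbols listed in the paper, and we would read \ref{assumption1}--\ref{assumption2} as including it. If that reading is not acceptable, a fallback is to use only the finitely many modes near the bifurcation, but we regard this sector extension as the genuinely delicate step.
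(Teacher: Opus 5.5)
Your decomposition, and your handling of the polynomial terms, of the quadratic terms (via the algebra property of $H^s_{\mathrm{per}}$ and $\mu\ge 2$), and of the boundedness of $\kappa\cM_\kappa\partial_z$ for fixed $\kappa$, are the same as in the paper. You differ at the step you single out. The paper argues only that each entry $\kappa\mapsto\kappa m(\kappa n)$ is analytic on $\R^+$, and that the growth bound makes $\cM_\kappa\partial_z$ bounded $X\to Y$ for each fixed $\kappa$. From this it concludes that $F$ is analytic. As you correctly recognize, that conclusion does not follow. Analyticity of $\kappa\mapsto\kappa\cM_\kappa\partial_z\in B(X,Y)$ needs Taylor coefficients of $\kappa n\,m(\kappa n)$ bounded by $CR^{-k}(1+|n|)^{\mu}$ uniformly in $n$. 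Entrywise analyticity controls neither the radii of convergence nor the $n$-growth of the derivatives. Your Cauchy estimate on discs of radius $\rho\kappa_*|n|$, in which the factors $n^k$ cancel, closes this correctly under your sector hypothesis. That hypothesis does hold for the six model symbols, since the poles and zeros of $\tanh$ and $\coth$ lie on the imaginary axis.

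You should, however, state the sector condition as an additional assumption rather than as a reading of (A1)--(A2). Without some hypothesis of this kind the lemma is false. Take $m(\xi)=\xi^2\bigl(2+\sin(\xi^2)\bigr)$. It is even and entire, and it satisfies (A2) with $\beta=2$, so $\mu=3$. Yet
\[
\partial_\kappa\bigl(\kappa n\,m(\kappa n)\bigr)=3\kappa^2n^3\bigl(2+\sin(\kappa^2n^2)\bigr)+2\kappa^4n^5\cos(\kappa^2n^2).
\]
At $\kappa=\sqrt{2\pi}$ this equals $12\pi n^3+8\pi^2n^5$; at every other $\kappa>0$ it is of size $|n|^5$ along an infinite set of $n$. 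Now take $\widehat{w}(n)=|n|^{-s-7/2}/\log(2+|n|)$ for $n\neq0$. Then $w\in X$, but the coefficientwise limit of the difference quotients of $\kappa\mapsto F(w,\kappa,c)$ at $\kappa=\sqrt{2\pi}$ is not in $Y$. So $F$ is not even differentiable in $\kappa$.

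For the same reason, your fallback of keeping only finitely many modes does not rescue the statement. Analyticity of $F$ on $X$ involves all modes, and so does the range equation in the Lyapunov--Schmidt step. Your argument is a correct proof of the lemma under the strengthened hypothesis, and the gap it isolates is a gap in the paper's own proof, not in yours.
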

\begin{proof}We analyze $F$ term by term. The map $(w, \ka) \mapsto \ka^2 w''$ is a bounded linear operator in $w$ and a bounded bilinear operator in $\ka$ and therefore is an analytic map. The maps $(w, \ka, c) \mapsto \ka cw'$ and $w \mapsto rw$ are bounded, multilinear and linear operators, respectively and hence are analytic. The operator $w \mapsto \ka \cM_\ka w'$ is linear in $w$. Its dependence on $\ka$ is determined by the symbol $\ka \cdot m(\ka n) \cdot (in)$. Because $m$ is analytic in $\R^+$ by Assumption \ref{assumption1}, the mapping $\ka \mapsto \ka m(\ka n)$ is analytic in $\R^+$. The growth bound $|m(\ka n)| \le C_2|\ka n|^\beta$ by Assumption \ref{assumption2} guarantees that $\cM_\ka w'$ maps $H^{s+\mu}_{\text{per}}(\mathbb{T})$ to $H^{s+\mu-\beta-1}_{\text{per}}(\mathbb{T}) \subseteq H^s_{\text{per}}(\mathbb{T})$ boundedly. Because the Sobolev space $H^s_{\text{per}}(\mathbb{T})$ is a Banach algebra, the maps $(w,\ka) \mapsto -\alpha \ka w w'$ and $w \mapsto -rw^2$ are bounded bilinear maps in $w$ and hence are analytic. Since $F$ is a finite sum of analytic maps, $F$ is analytic on its domain.
\end{proof}
For any $\ka>0$ and $c\in\R$, $w\equiv 0$ and $w\equiv 1$ are the only constant solutions to \eqref{S3:eq9}. We shall seek nonconstant solutions to \eqref{S3:eq9} bifurcating from the constant solutions $w\equiv 0$ and $w\equiv 1$. The Fr\'echet derivative of $F$ in \eqref{S3:eq7}
with respect to $w$ evaluated at a point $(w,\ka,c)$ is given by
\begin{equation}\label{S3:eq10}
\partial_{w}F\left(w,\ka,c\right)\coloneqq\ka^{2}\frac{d^{2}}{dz^{2}}-\ka \left(\cM_{\ka}-c\right)\frac{d}{dz}-\alpha\ka w\frac{d}{dz}-\alpha\ka w'\cI+r(1-2w)\cI,
\end{equation}
where $\cI$ is the identity operator.

\subsection{Bifurcation for \texorpdfstring{$w\equiv 1$}{w equiv 1}}
At $w=1$, \eqref{S3:eq10} gives 
\begin{equation*}
\partial_{w}F\left(1,\ka,c\right)=\ka^{2}\frac{d^{2}}{dz^{2}}-\ka \left(\cM_{\ka}-c\right)\frac{d}{dz}-\alpha\ka\frac{d}{dz}-r\cI.
\end{equation*}
Let $g(z)\in X$ is in $\ker(\partial_{w}F(1,\ka,c))$, then $(\partial_{w}F(1,\ka,c))g(z)=0$. Using the Fourier series of $g$, $g(z)=\sum\limits_{n\in\Z}\widehat{g}(n)e^{inz}$, with $\widehat{g}(n)=\widehat{g_1}(n)+i\widehat{g_2}(n)$ we have 
\begin{equation}\label{S3:eq100}
\sum\limits_{n\in\Z}(\lambda_1(n)+i\lambda_2(n))(\widehat{g_1}(n)+i\widehat{g_2}(n))e^{inz}=0,
\end{equation}
where  
$$\lambda_1(n)=-\ka^{2}n^{2}-r \quad \text{and} \quad \lambda_2(n)=-\ka n\,m(\ka n)+c\ka n-\alpha\ka n.$$ 
Comparing the real and imaginary parts in \eqref{S3:eq100}, we get
$$\lambda_1(n)\widehat{g_1}(n)-\lambda_2(n)\widehat{g_2}(n)=0 \quad \text{and} \quad \lambda_2(n)\widehat{g_1}(n)+\lambda_1(n)\widehat{g_2}(n)=0,$$
which can be expressed as 
\begin{equation}\label{S3:eq101}
\begin{pmatrix}
\lambda_1(n) & -\lambda_2(n)\\ \lambda_2(n) & \lambda_1(n)
\end{pmatrix}
\begin{pmatrix}
\widehat{g_1}(n) \\ \widehat{g_2}(n)
\end{pmatrix}=
\begin{pmatrix}
    0\\0
\end{pmatrix}.
\end{equation}
For the above system \eqref{S3:eq101} to have a nontrivial solution, we must have $\lambda_1^{2}(n)+\lambda_2^{2}(n) = 0$, which means that $\lambda_1(n)=0$ and $\lambda_2(n)=0$. Therefore, $$-\ka^{2}n^{2}-r=0 \quad \text{and } -\ka n\, m(\ka n)+c\ka n-\alpha\ka n=0,$$  
which is not possible as $r > 0$. So, $\widehat{g}(n)=0$, for all $n\in\Z$ implying $g(z)=0$ and $\ker(\partial_{w}F(1,\ka,c))$ is trivial. Hence, by the Implicit Function Theorem (\cref{S2:thm1}), there is no bifurcation for $w\equiv 1$.  

\subsection{Bifuraction for \texorpdfstring{$w\equiv 0$}{w equiv 0}}
At $w=0$, \eqref{S3:eq10} gives 
\begin{equation}\label{S3:eq12}
\partial_w F(0,\ka,c) = \ka^{2}\frac{d^2}{dz^2} - \ka(\mathcal{M}_{\ka} - c)\frac{d}{dz} + r\cI.
\end{equation}
Let us define the linearization of $F$ around $w=0$ as a bounded linear operator $\cL(\ka,c) : X \to Y$ defined by
\begin{equation}\label{S3:eq13}
\cL(\ka,c)w \coloneqq \ka^{2} w'' - \ka(\mathcal{M}_{\ka} - c)w' + rw.
\end{equation}
Applying the operator in \eqref{S3:eq13} to the Fourier basis functions, we get for all $n\in\Z$,
\begin{align*}
\cL(\ka,c)e^{inz} &= \left( -\ka^2 n^2 - \ka(m(\ka n) - c)in + r \right)e^{inz}.
\end{align*}
The function $e^{inz}$ will be in $\ker(\cL(\ka,c))$ if
\begin{align*}
r - \ka^2 n^2 &= 0 \implies \ka = \frac{\sqrt{r}}{|n|}, \quad (\text{since } \ka > 0) \\
\text{and } - \ka(m(\ka n) - c)n &= 0 \implies c = m(\ka n).
\end{align*}
For any $n\in\N$, taking a general $g(z)\in X$ and using its Fourier series, it is straightforward to see that 
\begin{equation}\label{S3:eq14}
\ker\left(\cL\left(\frac{\sqrt{r}}{n},m(\ka n)\right)\right) = \operatorname{span}\{e^{-inz},e^{inz}\}.
\end{equation}
We choose $n=1$ to demonstrate the existence of the fundamental harmonic. The analysis remains the same for any fixed $n > 1$, $n\in\N$. For $n=1$, we set $\ka_0 = \sqrt{r}$ and $c_0 = m(\ka_0)=m(\sqrt{r})$ and the linearized operator at $w_0 = 0$ as 
\begin{equation}\label{S3:eq15}
\cL_{0}\coloneqq \cL(\ka_0, c_0)=\ka_{0}^{2}\frac{d^2}{dz^2} - \ka_0(\mathcal{M}_{\ka_0} - c_0)\frac{d}{dz} + r\cI.
\end{equation}
\begin{lemma}[Adjoint of $\cL_0$]\label[lemma]{S3:lm0}
    The $L^2$-adjoint of $\mathcal{L}_{0}$ is
\begin{equation*}
\mathcal{L}_{0}^{*}\coloneqq\ka_0^2 \frac{d^2}{dz^2} + \ka_0 (\mathcal{M}_{\ka_0} - c_0) \frac{d}{dz} + r\cI, \end{equation*} 
where $\cL_{0}^{*}:X\to Y$ with $X$ and $Y$ be as defined in \eqref{S3:eq8} and \begin{equation}\label{S3:eqm2}
\ker(\cL_{0}^{*})=\operatorname{span} \left\{\cos z, \sin z\right\}.
\end{equation}
\end{lemma}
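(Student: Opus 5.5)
The plan is to work entirely on the Fourier side, where every operator involved is diagonal in the basis $\{e^{inz}\}_{n\in\Z}$. First I compute the symbol of $\cL_0$ from \eqref{S3:eq15}:
\[
\cL_0 e^{inz} = \ell(n) e^{inz}, \qquad \ell(n) = -\ka_0^2 n^2 + r - i\ka_0 n\,(m(\ka_0 n)-c_0).
\]
For $f\in X$ and $g\in Y$ (or both in $X$), Parseval's identity in the $L^2_{\text{per}}$ inner product gives
\[
\langle \cL_0 f,g\rangle = \sum_{n\in\Z}\ell(n)\widehat f(n)\overline{\widehat g(n)} = \sum_{n\in\Z}\widehat f(n)\overline{\overline{\ell(n)}\,\widehat g(n)}.
\]
Hence the adjoint is the Fourier multiplier with symbol $\overline{\ell(n)} = -\ka_0^2 n^2 + r + i\ka_0 n\,(m(\ka_0 n)-c_0)$. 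Here I use that $m$ is real-valued by \ref{assumption1} and that $c_0,\ka_0,r$ are real. I then identify this symbol with the operator $\ka_0^2 \frac{d^2}{dz^2} + \ka_0(\cM_{\ka_0}-c_0)\frac{d}{dz} + r\cI$. The point is that $\frac{d}{dz}$ has symbol $in$, which is skew-adjoint, while $\cM_{\ka_0}$ has real symbol $m(\ka_0 n)$, so it is self-adjoint and commutes with $\frac{d}{dz}$. Consequently $((\cM_{\ka_0}-c_0)\frac{d}{dz})^* = -(\cM_{\ka_0}-c_0)\frac{d}{dz}$, and the sign of the first-order term flips while the second-order and zeroth-order terms are unchanged.

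Boundedness of $\cL_0^*:X\to Y$ follows exactly as in \cref{S3:analytic}. By \ref{assumption2}, $|\overline{\ell(n)}|\lesssim (1+|n|)^{\mu}$, so the multiplier maps $H^{s+\mu}_{\text{per}}$ into $H^s_{\text{per}}$.

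For the kernel, $g\in\ker(\cL_0^*)$ iff $\overline{\ell(n)}\widehat g(n)=0$ for all $n$. Since $\overline{\ell(n)}=0$ iff $\ell(n)=0$, this is the same zero set as for $\cL_0$. The real part vanishes only when $\ka_0^2n^2=r$, i.e. $n=\pm1$ because $\ka_0=\sqrt r$. At $n=\pm1$ the imaginary part is $\pm\ka_0(m(\pm\ka_0)-c_0)=0$, by evenness of $m$ and $c_0=m(\ka_0)$. So $\ker(\cL_0^*)=\operatorname{span}\{e^{iz},e^{-iz}\}=\operatorname{span}\{\cos z,\sin z\}$, consistent with \eqref{S3:eq14}.

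The only delicate point is the case $n=0$ for the operator $\cM_{\ka_0}$, which is defined separately by $\cM_{\ka_0}(1)=m(0)$. There the factor $\frac{d}{dz}$ kills the constant mode, and $\ell(0)=r\neq0$, so the $n=0$ mode neither contributes to the kernel nor affects the adjoint formula. Beyond that I expect no genuine obstacle; the argument rests on the realness of $m$ from \ref{assumption1}.
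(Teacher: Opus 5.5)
Your proposal is correct and follows essentially the same route as the paper. The adjoint comes from the skew-adjointness of $\frac{d}{dz}$ and the self-adjointness of $\cM_{\ka_0}$, which you phrase as conjugating the Fourier symbol $\ell(n)$. The kernel is found by testing on $e^{inz}$ and forcing both $r-\ka_0^2 n^2$ and the imaginary part to vanish, which gives $n=\pm1$. You also make explicit two points the paper leaves implicit: that $\cM_{\ka_0}$ commutes with $\frac{d}{dz}$, and that evenness of $m$ is needed at $n=-1$.
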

\begin{proof}
 We know that there exists a unique $L^2$-adjoint of the operator $\mathcal{L}_{0}$ satisfying
\begin{equation*}
    \langle \cL_0 u, v \rangle_{L^2_{\text{per}}(\mathbb{T})}= \langle u, \cL_0^* v \rangle_{L^2_{\text{per}}(\mathbb{T})} \quad \text{for all } u,v \in C^\infty_\text{per}(\mathbb{T}).
\end{equation*}
The $L^2$-adjoint of $\frac{d}{dz}$ is $-\frac{d}{dz}$ and $\cM_\ka$ is a self-adjoint operator. Then using \eqref{S3:eq15} we obtain
\begin{equation*}
\mathcal{L}_{0}^{*}=\ka_0^2 \frac{d^2}{dz^2} + \ka_0 (\mathcal{M}_{\ka_0} - c_0) \frac{d}{dz} + r\cI, \end{equation*}
which can be extended to be defined from $X$ to $Y$ by using the pseudodifferential order $\mu=\max\{2,\beta+1\}$ of $\mathcal{L}_{0}^{*}$ and the fact that $C^\infty_\text{per}(\mathbb{T})$ is dense in $X$. To calculate the kernel of $\mathcal{L}_{0}^{*}$ we set
\begin{align*}
\cL_{0}^{*}e^{inz} &= \left( -\ka_{0}^2 n^2 + \ka_{0}(m(\ka_{0} n) - c_{0})in + r \right)e^{inz} = 0.
\end{align*}
Comparing the real and imaginary parts, we have
\begin{align*}
r - \ka_{0}^2 n^2 = 0 \implies r(1-n^2)=0 \implies n=\pm 1,& \\
\text{and } \ka_{0}(m(\ka_{0} n) - c_{0})n = 0 \text{ which is true for } n=\pm 1.&
\end{align*}
Again, taking a $g(z)\in X$ and using its Fourier series, we can easily see that
\begin{equation*}
\ker(\cL_{0}^{*}) = \operatorname{span}\{e^{-iz},e^{iz}\}=\operatorname{span} \left\{\cos z, \sin z\right\}.
\end{equation*}
\end{proof}
\begin{lemma}[Fredholm Operator]\label[lemma]{S3:lm1}
    $\cL_{0}$ is a Fredholm operator of index zero.
\end{lemma}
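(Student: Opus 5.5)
Since $\cL_0$ is a Fourier multiplier on $\mathbb{T}$, we will read off all of its Fredholm data from its symbol. From \eqref{S3:eq15},
\begin{equation*}
\cL_0 e^{inz} = \ell(n)\, e^{inz}, \qquad \ell(n) \coloneqq r - \ka_0^2 n^2 - i\ka_0 n\bigl(m(\ka_0 n) - c_0\bigr), \qquad n\in\Z .
\end{equation*}
Because $\ka_0^2 = r$, the real part is $r(1-n^2)$, which vanishes only at $n=\pm 1$; and there the imaginary part also vanishes, since $c_0 = m(\ka_0)$ and $m$ is even. Hence $\ell(n)=0$ exactly for $n=\pm1$, which recovers \eqref{S3:eq14}: $\dim\ker(\cL_0) = 2$. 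By \cref{S3:lm0}, $\ker(\cL_0^*)=\operatorname{span}\{\cos z,\sin z\}$ is also two-dimensional.

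Next we identify the range of $\cL_0$ as
\begin{equation*}
\operatorname{range}(\cL_0) = \{ f\in Y : \widehat f(\pm1)=0\} = \ker(\cL_0^*)^{\perp}\cap Y .
\end{equation*}
The inclusion ``$\subseteq$'' holds because $\ell(\pm1)=0$. For ``$\supseteq$'', take $f\in Y$ with $\widehat f(\pm1)=0$ and define $\widehat w(n)=\widehat f(n)/\ell(n)$ for $n\neq\pm1$ and $\widehat w(\pm1)=0$. It remains to show that $w\in X = H^{s+\mu}_{\text{per}}(\mathbb{T})$, which amounts to the lower bound
\begin{equation*}
|\ell(n)| \ge C (1+|n|)^{\mu}, \qquad |n|\ge 2,
\end{equation*}
for some constant $C>0$.

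This bound is the main step, and we prove it by splitting into small and large $|n|$.
\begin{itemize}
\item For finitely many $n$ with $|n|\ge2$, the quantity $|\ell(n)|$ is nonzero, so it is bounded below by a positive constant.
\item For large $|n|$, we have $|\ell(n)|^2 = (r-\ka_0^2n^2)^2 + \ka_0^2n^2\bigl(m(\ka_0 n)-c_0\bigr)^2$. The first term is $\gtrsim n^4$, which already gives $|\ell(n)|\gtrsim |n|^2$. If $\beta\le1$, then $\mu=2$ and we are done.
\item If $\beta>1$, then $\mu=\beta+1$ and we use the lower bound in \ref{assumption2}: $|m(\ka_0 n)|\ge C_1|\ka_0 n|^\beta$, which dominates the constant $c_0$ for large $|n|$. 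This yields $\ka_0|n|\,|m(\ka_0 n)-c_0|\gtrsim |n|^{\beta+1}$.
\end{itemize}
Combining the cases gives $|\ell(n)|\gtrsim(1+|n|)^\mu$. Therefore $\|w\|_{X}\lesssim\|f\|_Y$, so the range is exactly the stated closed subspace, whose codimension is $2$.

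Putting this together, $\cL_0$ is bounded, by \cref{S3:analytic} and \ref{assumption2}. It has closed range of codimension $2$ and a two-dimensional kernel. Hence $\cL_0$ is Fredholm with index $2-2=0$.
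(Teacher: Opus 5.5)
Your proposal is correct and follows essentially the same route as the paper: you diagonalize $\cL_0$ on Fourier modes and identify the range as $\{f\in Y:\widehat f(\pm1)=0\}$. You obtain the reverse inclusion by dividing by the symbol, using the lower bound $|\ell(n)|\gtrsim(1+|n|)^{\mu}$ split into finitely many low modes and large $|n|$, and then conclude that the index is $2-2=0$. The differences are cosmetic. You read the codimension directly off the explicit range, whereas the paper goes through $\ker(\cL_0^*)=(\operatorname{range}\cL_0)^{\perp}$. Your case split on $\beta$ is slightly more careful than the paper's combined $\max$ bound. Finally, your lower bound should be stated for all $n\neq\pm1$, including $n=0$ where $\ell(0)=r$, not only for $|n|\ge 2$.
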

\begin{proof}
    Now from \eqref{S3:eq14}, we have 
\begin{equation*}
\ker(\cL_0)=\operatorname{span}\left\{e^{-iz},e^{iz}\right\}=\operatorname{span}\left\{\cos{z},\sin{z}\right\}.
\end{equation*}
Therefore, $\dim\ker(\cL_{0})=2$, and hence $\ker(\cL_{0})$ is finite-dimensional. We shall prove that 
\begin{equation*}
\operatorname{range}(\cL_0) = \left\{ f \in Y \bigm| \widehat{f}(-1) = \widehat{f}(1) = 0 \right\}.
\end{equation*}
Let $f\in\operatorname{range}(\cL_{0})$, then there exists $w\in X$ such that $\cL_{0}w=f$. Using the Fourier series expansion of $w$ and $f$ and comparing the Fourier coefficients for each $n\in\mathbb{Z}$, we get that $\widehat{f}(-1) = \widehat{f}(1) = 0$. Conversely, suppose that $f\in Y$ and $\widehat{f}(-1) = \widehat{f}(1) = 0$. We define $w$ for which $$\widehat{w}(n) = \frac{\widehat{f}(n)}{-\ka_0^2 n^2 - \ka_0(m(\ka_0 n) - c_0)in + r}$$ for $n\neq\pm 1$ and $\widehat{w}(-1) = \widehat{w}(1) = 0$. It can be shown that $\cL_{0}w=f$. Now we prove that such $w\in X$. 
\begin{align*}
    \|w\|^{2}_{X}=\sum\limits_{n\in\Z}\left(1+|n|^{2}\right)^{s+\mu}|\widehat{w}(n)|^{2}
    =\sum\limits_{n\in\Z\setminus\{\pm1\}}\left(1+|n|^{2}\right)^{s+\mu}\frac{|\widehat{f}(n)|^{2}}{(-\ka_0^2 n^2+r)^{2}+( \ka_0n(m(\ka_0 n) - c_0))^{2}}.
\end{align*}
Let us define 
\begin{align*}
    g(n)&=(-\ka_0^2 n^2+r)^{2}+( \ka_0n(m(\ka_0 n) - c_0))^{2}\\ 
    &\geq \max \left\{(-\ka_0^2 n^2+r)^{2},( \ka_0n(m(\ka_0 n) - c_0))^{2}\right\}\\
    &\geq \max\left\{C_1 |n|^{4}, C_2 |n|^{2\beta+2}\right\} \qquad & \text{for } |n|\geq N \text{ and positive constants } C_1, C_2, N\\
    &\geq C_3(1+|n|^{2})^{\mu}  \qquad & \text{for } |n|\geq N \text{ and positive constant } C_3. 
\end{align*}
We set $$C_4=\min_{\substack{0<|n|<N\\|n|\neq 1}}\left\{\frac{g(n)}{(1+|n|^{2})^{\mu}}\right\}.$$ Then $g(n)\geq C_5 (1+|n|^{2})^{\mu}$ where $C_5=\min\{C_3,C_4\}$. Therefore, $\|w\|_{X}\leq C\|f\|_{Y}<\infty$.

Next we define the linear functional $$\phi_n : Y \to \mathbb{R}$$ by $$\phi_n(f) = \widehat{f}(n).$$
It can be shown that for any $f\in Y=H^{s}_{\text{per}}(\mathbb{T})$, $s>1/2$,
\begin{equation*}
|\phi_n(f)|^{2} = |\widehat{f}(n)|^{2}\leq \frac{1}{(1+|n|^{2})^{s}}\|f\|_Y^2.
\end{equation*}
Therefore, 
\begin{equation*}
|\phi_n(f)|\leq M \|f\|_Y, \text{ where } M=\frac{1}{(1+|n|^{2})^{\frac{s}{2}}} > 0.
\end{equation*}
Thus, $\phi_n$ is bounded and therefore continuous. This implies that the maps $\phi_{-1} : Y \to \mathbb{R}$ and $\phi_{1} : Y \to \mathbb{R}$ are continuous, and hence the following sets are closed
\begin{equation*}
S_1 = \{ f \in Y \mid \widehat{f}(-1) = 0 \}, \quad
S_2 = \{ f \in Y \mid \widehat{f}(1) = 0 \}.
\end{equation*}
Therefore, $\operatorname{range}(\cL_0) = S_1 \cap S_2$ is closed. Also,
\begin{equation*}
\ker(\cL_0^{*}) = (\operatorname{range}(\cL_{0}))^\perp \cong Y/\operatorname{range}(\cL_{0})=\operatorname{cokernel}(\cL_0).
\end{equation*}
Therefore, by \eqref{S3:eqm2}, $\dim\operatorname{cokernel}(\cL_{0})=2$, and hence $\operatorname{cokernel}(\cL_{0})$ is finite-dimensional. Thus, $\cL_0$ is a Fredholm operator with index $0$.
\end{proof}

\subsubsection{Lyapunov--Schmidt Reduction}
We now prove the existence of periodic traveling waves bifurcating from $(0, \ka_0, c_0)$. We have $F(0,\ka_0,c_0)=0$. We can define the $L^2$-orthogonal projection $\Pi: Y \to \ker(\cL_0)$ by
\begin{equation}\label{S3:eq19}
(\Pi f)(z) = \frac{1}{\pi} \left( \int_{0}^{2\pi} f(s) \cos(s) ds \right) \cos z + \frac{1}{\pi} \left( \int_{0}^{2\pi} f(s) \sin(s) ds \right) \sin z,
\end{equation}
and $(\cI-\Pi):Y\to(\cI-\Pi)Y$ by
\begin{equation}\label{S3:eq20}
    ((\cI-\Pi)f)(z)=f(z)-(\Pi f)(z).
\end{equation}
Since $\ker(\cL_0)$ and $\operatorname{range}(\cL_0)$ are closed we can split our function spaces $X = X_1 \oplus X_2$ and $Y = Y_1 \oplus Y_2$ dictated by the projections defined in \eqref{S3:eq19} and \eqref{S3:eq20} as
\begin{align*}
X_1 &= \ker(\cL_0) \subset X, \quad &X_2 = (\cI - \Pi)X, \\
Y_1 &= \ker(\cL_0) \subset Y, \quad &Y_2 = (\cI - \Pi)Y.
\end{align*}
So, any solution $w\in X$ can be decomposed as $w(z)=a\cos{z}+b\sin{z}+v(z)$ where $v\in X_2$. We can express this in the form 
\begin{equation*}
    w(z)=\eps \cos{(z-\theta)}+v(z),
\end{equation*} such that $\eps = \sqrt{a^2+b^2}$ and $\theta$ is such that $\cos{\theta}=\frac{a}{a^{2}+b^{2}}$ and $\sin{\theta}=\frac{b}{a^{2}+b^{2}}$. Due to the translation invariance of the PDE \eqref{S1:eq1} (if $w(z)$ is a solution, so is $w(z+\theta)$), we can take the phase angle $\theta=0$. Therefore, we decompose the solution $w \in X$ as
\begin{equation}\label{S3:eq21}
w = \eps \cos z + v,
\end{equation}
where $\eps \in \R$ is an amplitude parameter and $v \in X_2$ contains all higher-order harmonics ($v$ is $L^2$-orthogonal to both $\cos z$ and $\sin z$). Also, since $\ker(\cL(\ka_0,c_0))$ is nontrivial, to look for a nontrivial solution in a neighborhood of $\ka_0$ and $c_0$, we introduce parameter shifts $\tilde{\ka}\in\R$ and $\tilde{c}\in\R$ such that
\begin{equation}\label{S3:eq22}
\ka = \ka_0 + \tilde{\ka} \quad \text{and} \quad c = c_0 + \tilde{c}.
\end{equation}
Substituting \eqref{S3:eq21} and \eqref{S3:eq22} into \eqref{S3:eq9} and applying the projections defined in \eqref{S3:eq19} and \eqref{S3:eq20}, we split the infinite-dimensional problem \eqref{S3:eq9} into a coupled system defined as
\begin{empheq}
[left=\empheqlbrace]{align}(\cI - \Pi) F(\eps \cos z + v, \ka_0 + \tilde{\ka}, c_0 + \tilde{c}) &= 0, \quad \text{(The Range Equation)} \label{S3:eq23} \\
\Pi F(\eps \cos z + v, \ka_0 + \tilde{\ka}, c_0 + \tilde{c}) &= 0. \quad \text{(The Bifurcation Equation)} \label{S3:eq24}
\end{empheq}
We start with the Range Equation \eqref{S3:eq23} and define $G: \R \times X_2 \times \R \times \R \to Y_2$ as 
\begin{equation*}
    G(\eps, v, \tilde{\ka}, \tilde{c}) = (\cI - \Pi) F(\eps \cos z + v, \ka_0 + \tilde{\ka}, c_0 + \tilde{c}).
\end{equation*}
Since $w\equiv 0$ is a solution of \eqref{S3:eq9} for all $\ka$ and $c$, $G(0,0,0,0) = (\cI-\Pi)F(0,\ka_0,c_0) = 0$. $G$ is analytic because it is the composition of a bounded linear projection $(\cI-\Pi)$ and the analytic map $F$ as in \cref{S3:analytic}. The Fr\'echet derivative of $G$ with respect to $v$ at a point $(\eps,v,\tilde{\ka},\tilde{c})$ in the direction $\tilde{v}\in X_2$ is
\begin{equation*}
   \partial_v G(\eps,v,\tilde{\ka},\tilde{c})\tilde{v} = (\cI - \Pi) \partial_w F(\eps \cos z + v, \ka_0 + \tilde{\ka}, c_0 + \tilde{c})\tilde{v}.
\end{equation*}
Therefore, using the fact that $\ker(\Pi)=\operatorname{range}(\cL_0)$, we have
\begin{equation*}
\partial_v G(0,0,0,0) = (\cI - \Pi) \partial_w F(0, \ka_0, c_0) |_{X_2} = \cL_0 |_{X_2}.
\end{equation*}
The operator $\cL_0$ restricts to a bounded isomorphism mapping from $X_2$ to $Y_2$ as the functions that correspond to a nontrivial kernel have been removed from the domain of $\cL_0$. Also, the functions with nonzero Fourier coefficients corresponding to $n=\pm 1$ have been removed from the co-domain of $\cL_0$. Therefore, $\partial_v G(0,0,0,0):X_2\to Y_2$ is a bounded linear isomorphism. By the Implicit Function Theorem (\cref{S2:thm1}), there exists a unique analytic map $v^*:\R\times\R\times\R\to X_2$ such that $v^*(\eps, \tilde{\ka}, \tilde{c}) \in X_2$, defined in a neighborhood of $(0,0,0)$ such that $v^*(0,0,0) = 0$ and \begin{equation*}
    G(\eps,v^*(\eps,\tilde{\ka},\tilde{c}),\tilde{\ka},\tilde{c})\equiv 0.
\end{equation*}
Therefore, 
\begin{equation}\label{S3:eq27}
(\cI - \Pi) F(\eps \cos z + v^*(\eps, \tilde{\ka}, \tilde{c}), \ka_0 + \tilde{\ka}, c_0 + \tilde{c}) \equiv 0.
\end{equation}
Because $F(0,\ka,c) = 0$ regardless of the value of the parameters $\ka$ and $c$, uniqueness dictates that $v^*(0, \tilde{\ka}, \tilde{c}) = 0$ in a neighborhood of $(0,0,0)$. By Taylor expansion of $v^*$, we conclude that $v^* = \mathcal{O}(\eps)$.
\vspace{10pt}
\begin{lemma}[Derivatives of $v^*$]\label[lemma]{S3:lm2}
The unique solution to the Range Equation, $v^*(\eps, \tilde{\ka}, \tilde{c}) \in X_2$, satisfies
\begin{equation}\label{S3:eq28}
\partial_\eps v^*(0, 0, 0) = 0.
\end{equation}
Consequently, for fixed parameters $\ka=\ka_0$ and $c=c_0$, the Taylor expansion of $v^*$ with respect to the amplitude $\eps$ contains no linear terms, implying $v^* = \mathcal{O}(\eps^2)$. Also, \begin{equation}\label{S3:eq29}
\partial_{\tilde{\ka}} v^*(0, 0, 0) = 0,
\end{equation}
and \begin{equation}\label{S3:eq30}
\partial_{\tilde{c}} v^*(0, 0, 0) = 0.
\end{equation}
\end{lemma}
\begin{proof}
Differentiating \eqref{S3:eq27} with respect to $\eps$, we get
\begin{equation*}
   (\cI - \Pi) \partial_{w} F(\eps \cos{z} + v^*(\eps, \tilde{\ka}, \tilde{c}), \ka_0 + \tilde{\ka}, c_0 + \tilde{c})\left[\cos{z}+\partial_{\eps}v^*(\eps,\tilde{\ka},\tilde{c})\right]= 0,
\end{equation*}
and evaluating it at the origin $(\eps, \tilde{\ka}, \tilde{c}) = (0, 0, 0)$ yields
\begin{equation*}
(\cI - \Pi) \partial_w F(0, \ka_0, c_0) \left[ \cos z + \partial_\eps v^*(0, 0, 0) \right] = 0.
\end{equation*}
Therefore, by linearity,
\begin{equation*}
(\cI - \Pi) \cL_0 (\cos z) + (\cI - \Pi) \cL_0 (\partial_\eps v^*(0, 0, 0)) = 0.
\end{equation*}
Because $\cos z \in \ker(\cL_0)$, we have $\cL_0 (\cos z) = 0$. Furthermore, since $\ker(\Pi)=\operatorname{range}(\cL_0)$ we have $(\cI-\Pi) \cL_0 = \cL_0$. Thus, the equation simplifies to
\begin{equation*}
\cL_0 \left( \partial_\eps v^*(0, 0, 0) \right) = 0.
\end{equation*}
However, $\partial_\eps v^*(0, 0, 0) \in X_2$ by construction, and $X_2 \cap \ker(\cL_0) = \{0\}$ by the direct sum decomposition, forcing $\partial_\eps v^*(0, 0, 0) = 0$. Since, we previously established $v^*(0,0,0) = 0$ for fixed parameters $\ka=\ka_0$ and $c=c_0$, the Taylor expansion of $v^*$ with respect to the amplitude $\eps$ contains no linear terms, implying $v^* = \mathcal{O}(\eps^2)$.

Differentiating \eqref{S3:eq7} with respect to $\ka$ evaluated at a point $(w,\ka,c)$ is given by
\begin{equation*}
\partial_{\ka}F(w,\ka,c)=2\ka w''-\ka\cM'_{\ka}w'-\cM_{\ka}w'+cw'-\alpha ww',
\end{equation*}
which when evaluated at the point $(0,\ka_0,c_0)$ yields
\begin{equation}\label{S3:eq102}
\partial_{\ka}F(0,\ka_0,c_0)=0.
\end{equation}
Now, differentiating \eqref{S3:eq27} with respect to $\tilde{\ka}$, we get
\begin{align*}
    &(\cI - \Pi) \partial_{\ka} F(\eps \cos{z} + v^*(\eps, \tilde{\ka}, \tilde{c}), \ka_0 + \tilde{\ka}, c_0 + \tilde{c})\\&+(\cI - \Pi) \partial_{w} F(\eps \cos{z} + v^*(\eps, \tilde{\ka}, \tilde{c}), \ka_0 + \tilde{\ka}, c_0 + \tilde{c})\left[\partial_{\tilde{\ka}}v^*(\eps,\tilde{\ka},\tilde{c})\right] = 0,
\end{align*}
and evaluating it at the origin $(\eps, \tilde{\ka}, \tilde{c}) = (0, 0, 0)$ yields 
\begin{equation*}
   (\cI - \Pi) \partial_{\ka}F(0,\ka_0, c_0)+(\cI - \Pi) \partial_{w}F(0,\ka_0, c_0)\left[\partial_{\tilde{\ka}}v^*(0,0,0)\right] = 0. 
\end{equation*}
Therefore, by \eqref{S3:eq102} and the fact that $\ker(\Pi)=\operatorname{range}(\cL_0)$, we get
\begin{equation*}
\cL_0(\partial_{\tilde{\ka}}v^*(0,0,0))=0,
\end{equation*}
which by $X_2 \cap \ker(\cL_0) = \{0\}$ forces $\partial_{\tilde{\ka}}v^*(0,0,0)=0$. 
Following similar arguments we can prove that $\partial_{\tilde{c}} v^*(0, 0, 0) = 0$.
\end{proof}
To solve the finite-dimensional Bifurcation Equation we first substitute the mapping $v^*$ into \eqref{S3:eq24} to obtain a reduced equation $\Phi(\eps, \tilde{\ka}, \tilde{c}) = 0$, where $\Phi: \R \times \R \times \R \to \ker(\cL_0) \cong \R^2$ and is defined by
\begin{equation*}
\Phi(\eps, \tilde{\ka}, \tilde{c}) \coloneqq \Pi F(\eps \cos z + v^*(\eps, \tilde{\ka}, \tilde{c}), \ka_0 + \tilde{\ka}, c_0 + \tilde{c}).
\end{equation*}
Since $\Phi(0, \tilde{\ka}, \tilde{c}) = 0$ for all $\tilde{\ka},\tilde{c}\in\R$, the Jacobian matrix of $\Phi$ with respect to $(\tilde{\ka},\tilde{c})$ evaluated at the point $(0,0,0)$, $\partial_{(\tilde{\ka},\tilde{c})}\Phi(0,0,0)$, is the singular zero matrix. Therefore, the Implicit Function Theorem (\cref{S2:thm1}) does not apply to $\Phi=0$ at the origin. Instead, we factor out the $\eps$ parameter and define the divided difference function
\begin{equation*}
\tilde{\Phi}(\eps, \tilde{\ka}, \tilde{c}) \coloneqq 
\begin{cases} 
\frac{1}{\eps} \Phi(\eps, \tilde{\ka}, \tilde{c}), & \eps \neq 0, \\
\partial_\eps \Phi(0, \tilde{\ka}, \tilde{c}), & \eps = 0.
\end{cases}
\end{equation*}
We aim to apply the Implicit Function Theorem (\cref{S2:thm1}) to $\tilde{\Phi} = 0$ at the origin $(0,0,0)$ to solve for $\tilde{\ka}$ and $\tilde{c}$ as functions of $\eps$. We treat $\tilde{\Phi} =(\tilde{\Phi}_1, \tilde{\Phi}_2)^T$ as a vector corresponding to the coefficients of $\cos z$ and $\sin z$. The Fr\'echet derivative of $\Phi$ with respect to $\eps$ evaluated at a point $(\eps,\tilde{\ka},\tilde{c})$ is
\begin{equation}\label{S3:eq33}
\partial_{\eps}\Phi(\eps,\tilde{\ka},\tilde{c})=\Pi \partial_{w} F(\eps \cos{z} + v^*(\eps, \tilde{\ka}, \tilde{c}), \ka_0 + \tilde{\ka}, c_0 + \tilde{c})\left[\cos{z}+\partial_{\eps}v^*(\eps,\tilde{\ka},\tilde{c})\right].
\end{equation}
Evaluating $\tilde{\Phi}$ at the base point gives
\begin{equation*}
\tilde{\Phi}(0,0,0) = \partial_\eps \Phi(0,0,0) = \Pi \left( \partial_w F(0,\ka_0,c_0) [ \cos z + \partial_\eps v^*(0,0,0) ] \right).
\end{equation*}
Because $\partial_w F(0,\ka_0,c_0) = \cL_0$, and $\Pi \cL_0 = 0$, we have $\tilde{\Phi}(0,0,0) = 0$. By definition of $\tilde{\Phi}$, it is an analytic map. Differentiating \eqref{S3:eq33} with respect to $\tilde{\ka}$ we obtain
\begin{align*}
\partial_{\tilde{\ka}}\partial_{\eps}\Phi(\eps,\tilde{\ka},\tilde{c})&=
    \Pi \partial_{w} F(\eps \cos{z} + v^*(\eps, \tilde{\ka}, \tilde{c}), \ka_0 + \tilde{\ka}, c_0 + \tilde{c})\partial_{\tilde{\ka}}\partial_{\eps}v^*(\eps,\tilde{\ka},\tilde{c})\\&\quad +\Pi\partial_{\ka} \partial_{w} F(\eps \cos{z} + v^*(\eps, \tilde{\ka}, \tilde{c}), \ka_0 + \tilde{\ka}, c_0 + \tilde{c})\left[\cos{z}+\partial_{\eps}v^*(\eps,\tilde{\ka},\tilde{c})\right]\\&\quad+\Pi \partial_{ww} F(\eps \cos{z} + v^*(\eps, \tilde{\ka}, \tilde{c}), \ka_0 + \tilde{\ka}, c_0 + \tilde{c})\partial_{\tilde{\ka}}v^*(\eps,\tilde{\ka},\tilde{c})\left[\cos{z}+\partial_{\eps}v^*(\eps,\tilde{\ka},\tilde{c})\right].    
\end{align*}
Therefore,
\begin{align*}
\partial_{\tilde{\ka}}\partial_{\eps}\Phi(0,0,0)&=
    \Pi \partial_{w} F(0, \ka_0, c_0)\partial_{\tilde{\ka}}\partial_{\eps}v^*(0,0,0)\\&\quad+\Pi\partial_{\ka} \partial_{w} F(0, \ka_0, c_0)\left[\cos{z}+\partial_{\eps}v^*(0,0,0)\right]\\&\quad+\Pi \partial_{ww} F(0, \ka_0, c_0)\partial_{\tilde{\ka}}v^*(0,0,0)\left[\cos{z}+\partial_{\eps}v^*(0,0,0)\right].
\end{align*}
Using the fact that $\Pi\cL_0=0$, \eqref{S3:eq28}, and \eqref{S3:eq29} we find
\begin{equation*}
\partial_{\tilde{\ka}} \tilde{\Phi}(0,0,0) = \partial_{\tilde{\ka}} \partial_\eps \Phi(0,0,0) = \Pi \partial_{\ka} \left( \partial_w F(0, \ka_0, c_0) [\cos z] \right).
\end{equation*}
Using \eqref{S3:eq12} yields
\begin{equation}\label{S3:eq36}
\partial_w F(0, \ka, c) \cos z = -\ka^2 \cos z + \ka(m(\ka)-c)\sin z + r \cos z.
\end{equation}
Differentiating this with respect to $\ka$ and evaluating at $(\ka_0, c_0)$ yields
\begin{align*}
\left. \partial_\ka \left( \partial_w F(0, \ka, c) \cos z \right) \right|_{(\ka_0, c_0)} &= -2\ka_0 \cos z + \left[ (m(\ka_0)-c_0) + \ka_0 m'(\ka_0) \right]\sin z \\
&= -2\ka_0 \cos z + \ka_0 m'(\ka_0) \sin z.
\end{align*}
Projecting this onto $\cos z$ and $\sin z$ gives
\begin{equation}\label{S3:eq37}
\partial_{\tilde{\ka}} \tilde{\Phi}_1(0,0,0) = -2\ka_0 \quad \text{and} \quad \partial_{\tilde{\ka}} \tilde{\Phi}_2(0,0,0) = \ka_0 m'(\ka_0).
\end{equation}
Similarly, using \eqref{S3:eq28} and \eqref{S3:eq30} we can obtain 
\begin{equation*}
\partial_{\tilde{c}} \tilde{\Phi}(0,0,0) = \partial_{\tilde{c}} \partial_\eps \Phi(0,0,0) = \Pi \partial_{c} \left( \partial_w F(0, \ka_0, c_0) [\cos z] \right).
\end{equation*}
Differentiating \eqref{S3:eq36} with respect to $c$ and evaluating at $(\ka_0,c_0)$ yields
\begin{equation*}
\left. \partial_c \left( \partial_w F(0, \ka, c) \cos z \right) \right|_{(\ka_0, c_0)} = -\ka_0 \sin z.
\end{equation*}
Projecting onto $\cos{z}$ and $\sin{z}$ gives
\begin{equation}\label{S3:eq39}
\partial_{\tilde{c}} \tilde{\Phi}_1(0,0,0) = 0 \quad \text{and} \quad \partial_{\tilde{c}} \tilde{\Phi}_2(0,0,0) = -\ka_0.
\end{equation}
Therefore, using \eqref{S3:eq37} and \eqref{S3:eq39} the Jacobian matrix evaluated at the origin is
\begin{equation*}
J = \partial_{(\tilde{\ka},\tilde{c})}\tilde{\Phi}(0,0,0)=\begin{pmatrix} 
-2\ka_0 & 0 \\
\ka_0 m'(\ka_0) & -\ka_0
\end{pmatrix}.
\end{equation*}
Now $\det(J) = (-2\ka_0)(-\ka_0) = 2\ka_0^2= 2r > 0$. The Jacobian is nonsingular. Therefore, by the Implicit Function Theorem (\cref{S2:thm1}), there exist unique analytic mappings $\tilde{\ka}(\eps)$ and $\tilde{c}(\eps)$ defined for $\eps$ sufficiently small, defined in a neighborhood of $0$ such that $\tilde{\ka}(0)=0$, $\tilde{c}(0)=0$, and $\tilde{\Phi}(\eps, \tilde{\ka}(\eps), \tilde{c}(\eps)) \equiv 0$. Thus, we have proved the existence of an analytic, continuous branch of nontrivial periodic traveling wave solutions parameterized by the small amplitude $\eps$.

\subsection{Asymptotic Expansions}
Having proven existence, we now derive the explicit asymptotic expansions of the solutions. We expand the wave profile, wavenumber, and wave speed in powers of $\eps$ as 
\begin{align*}
w(z) &= \eps w_1(z) + \eps^2 w_2(z) + \eps^3 w_3(z) + \mathcal{O}(\eps^4), \\
\ka &= \ka_0 + \eps \ka_1 + \eps^2 \ka_2 + \mathcal{O}(\eps^3), \\
c &= c_0 + \eps c_1 + \eps^2 c_2 + \mathcal{O}(\eps^3). 
\end{align*}
Because of \eqref{S3:eq21}, $w_1(z) = \cos z$ and $w_j(z) \perp \cos z, \sin z$ for $j \geq 2$. We substitute these expansions into $F(w, k, c) = 0$ and match coefficients at each order of $\eps$. We get
\begin{align*}
F(w(z),\ka,c)&=\left(\ka_0+\eps\ka_1+\eps^{2}\ka_2\right)^{2}\left(\eps w_1''(z) + \eps^2 w_2''(z) + \eps^3 w_3''(z)\right)\\ &\quad-\left(\ka_0+\eps\ka_1+\eps^{2}\ka_2\right)\left(\cM_{\ka_0+\eps\ka_1+\eps^{2}\ka_2}-c_0-\eps c_1-\eps^{2}c_2\right)\left(\eps w_1'(z) + \eps^2 w_2'(z) + \eps^3 w_3'(z)\right)\\&\quad-\alpha\left(\ka_0+\eps\ka_1+\eps^{2}\ka_2\right)\left(\eps w_1(z) + \eps^2 w_2(z) + \eps^3 w_3(z)\right)\left(\eps w_1'(z) + \eps^2 w_2'(z) + \eps^3 w_3'(z)\right)\\&\quad+r\left(\eps w_1(z) + \eps^2 w_2(z) + \eps^3 w_3(z)\right)\left(1-\eps w_1(z) - \eps^2 w_2(z) - \eps^3 w_3(z)\right)+\cdots\\&= \,0.
\end{align*}
Using Assumption \ref{assumption1} on the symbol $m$, we get
\begin{align*}
&\cM_{\ka_0+\eps\ka_1+\eps^{2}\ka_2}\left(e^{inz}\right)\\&= m\left(\left(\ka_0+\eps\ka_1+\eps^{2}\ka_2\right)n\right)e^{inz}\\&=\left(m(\ka_{0}n)+\left(\eps\ka_1+\eps^{2}\ka_2\right)nm'(\ka_{0}n)+\frac{\eps^2\ka_1^2n^{2}}{2}m''(\ka_{0}n)+\mathcal{O}(\eps^{3})\right)e^{inz}\\&=\cM_{\ka_0}e^{inz}+\left(\eps \ka_{1}\right)\cM'_{\ka_0}e^{inz}+\eps^{2}\left(\ka_{2}\cM'_{\ka_0}+\frac{\ka_{1}^{2}}{2}\cM''_{\ka_0}\right)e^{inz}+\mathcal{O}(\eps^{3}),
\end{align*}
where
\begin{align*}
\cM_{\ka_0}e^{inz}&=m(\ka_{0}n)e^{inz},\\\cM'_{\ka_0}e^{inz}&=nm'(\ka_{0}n)e^{inz},\\\cM''_{\ka_0}e^{inz}&=n^{2}m''(\ka_{0}n)e^{inz}.
\end{align*}

\subsubsection{Order \texorpdfstring{$\mathcal{O}(\eps)$}{O eps}}
The $\mathcal{O}(\eps)$ equation is simply the linearized equation evaluated at the bifurcation point
\begin{equation}\label{S3:eq44}
\cL_0 w_1 = \ka_0^2 w_1'' - \ka_0(\cM_{\ka_0} - c_0)w_1' + r w_1 = 0.
\end{equation}
By our choice of $\ka_0=\sqrt{r}$ and $c_0=m(\sqrt{r})$, we see that $w_1(z) = \cos z$ satisfies \eqref{S3:eq44} and hence is a solution. 

\subsubsection{Order \texorpdfstring{$\mathcal{O}(\eps^2)$}{O eps 2}}
Collecting all terms of order $\eps^2$ and equating to $0$ we obtain
\begin{align}\label{S3:eq103}
&\ka_{0}^{2}w''_2+2\ka_{0}\ka_{1}w''_{1}-\ka_{0}\ka_{1}\cM'_{\ka_0}w'_{1}-\ka_{1}\cM_{\ka_{0}}w'_{1}-\ka_{0}\cM_{\ka_{0}}w'_{2}+\ka_{0}c_{0}w'_{2}\notag\\+&\ka_{0}c_{1}w'_{1}+\ka_{1}c_{0}w'_{1}-\alpha\ka_{0}w_1w'_1+rw_2-rw_{1}^{2}=0.
\end{align}
Using the expression for $\cL_{0}$ from \eqref{S3:eq15} and the fact that $c_{0}=m(\ka_{0})$ and $w_{1}(z)=\cos{z}$, the above equation \eqref{S3:eq103} can be rearranged as
\begin{equation} \label{S3:eq47}
\cL_0 w_2 = (2 \ka_0\ka_1) \cos z + (\ka_0 c_{1}-\ka_0 \ka_1 m'(\ka_0)) \sin z + \frac{r}{2} + \frac{r}{2} \cos(2z) - \frac{\alpha \ka_0}{2} \sin(2z).
\end{equation}
By the Fredholm Alternative \citep[see][\S 2.2.5]{Kapitula2013}, for a periodic solution $w_2(z)$ to exist, the right-hand side of \eqref{S3:eq47} must be orthogonal to $\ker(\cL_0^*) =\operatorname{span}\{\cos z, \sin z\}$. Thus, the coefficients of $\cos z$ and $\sin z$ in \eqref{S3:eq47} must be zero. Hence $\ka_{1}=0$ and $c_{1}=0$, which proves that the first-order corrections to the wavenumber and wave speed vanish. The equation for $w_2$ simplifies to
\begin{equation}\label{S3:eq48}
\cL_0 w_2 = \frac{r}{2} + \frac{r}{2} \cos(2z) - \frac{\alpha \ka_0}{2} \sin(2z).
\end{equation}
We seek a solution of the form $w_2(z) = A_0 + A_2 \cos(2z) + B_2 \sin(2z)$ to \eqref{S3:eq48}. 
Applying $\cL_0$ we have
\begin{align*}
\cL_0(A_0) &= r A_0, \\
\cL_0(A_2 \cos(2z) + B_2 \sin(2z)) &= A_2 \left[ -4\ka_0^2 \cos(2z) + 2\ka_0(m(2\ka_0)-c_0)\sin(2z) + r \cos(2z) \right] \\
&\quad + B_2 \left[ -4\ka_0^2 \sin(2z) - 2\ka_0(m(2\ka_0)-c_0)\cos(2z) + r \sin(2z) \right].
\end{align*}
Comparing coefficients,
\begin{align}
r A_0 = \frac{r}{2} \implies A_0 &= \frac{1}{2},\notag \\
-3r A_2 - 2\ka_0 (m(2\ka_0)-c_0) B_2 &= \frac{r}{2},\label{S3:eq50} \\
2\ka_0 (m(2\ka_0)-c_0) A_2 - 3r B_2 &= -\frac{\alpha \ka_0}{2}.\label{S3:eq51}
\end{align}
Solving for $A_2$ and $B_2$ from \eqref{S3:eq50} and \eqref{S3:eq51} gives
\begin{equation*}
A_2  = -\frac{3r^2 + 2 \alpha \ka_0^2 (m(2\ka_0)-c_0)}{2(9r^2 + 4\ka_0^2 (m(2\ka_0)-c_0)^2)},\quad
B_2  = \frac{3 \alpha r \ka_0 - 2 r \ka_0 (m(2\ka_0)-c_0)}{2(9r^2 + 4\ka_0^2 (m(2\ka_0)-c_0)^2)}. 
\end{equation*}
Therefore,
\begin{equation*}
w_2(z)=\frac{1}{2}-\left(\frac{3r^2 + 2 \alpha \ka_0^2 (m(2\ka_0)-c_0)}{2(9r^2 + 4\ka_0^2 (m(2\ka_0)-c_0)^2)}\right)\cos{2z}+\left(\frac{3 \alpha r \ka_0 - 2 r \ka_0 (m(2\ka_0)-c_0)}{2(9r^2 + 4\ka_0^2 (m(2\ka_0)-c_0)^2)}\right)\sin{2z}.
\end{equation*}

\subsubsection{Order \texorpdfstring{$\mathcal{O}(\eps^3)$}{O eps 3}}
Collecting all terms of order $\eps^3$ and equating to $0$ and using that $\ka_1=c_1=0$, and rearranging using \eqref{S3:eq15}, we obtain
\begin{equation}\label{S3:eq55}
\cL_{0}w_3=2\ka_{0}\ka_2\cos{z}+(\ka_0c_2 -\ka_0\ka_2m'(\ka_0))\sin{z}+2rw_1w_2+\alpha\ka_0(w_1w_2)'.
\end{equation}
We compute $w_1 w_2$ as
\begin{align}\label{S3:eq56}
w_1 w_2 &= \cos z \left( \frac{1}{2} + A_2 \cos(2z) + B_2 \sin(2z) \right) \nonumber \\
&= \left( \frac{1+A_2}{2} \right) \cos z + \frac{B_2}{2} \sin z + \frac{A_2}{2}\cos{3z}+\frac{B_2}{2}\sin{3z}.
\end{align}
Taking the derivative, we have
\begin{equation}\label{S3:eq57}
(w_1 w_2)' = - \left( \frac{1+A_2}{2} \right) \sin z + \frac{B_2}{2} \cos z -\frac{3}{2}A_{2}\sin{3z}+\frac{3}{2}B_2\cos{3z}.
\end{equation}
Substituting \eqref{S3:eq56} and \eqref{S3:eq57} in \eqref{S3:eq55}, and applying the Fredholm Alternative again, we can equate the coefficients of $\cos{z}$ and $\sin{z}$ to $0$ to obtain
\begin{align*}
2\ka_0 \ka_2 + r(1+A_2) + \frac{\alpha \ka_0 B_2}{2} &= 0, \\
-\ka_0 \ka_2m'(\ka_0) +\ka_0 c_2 + r B_2 - \frac{\alpha \ka_0(1+A_2)}{2} &= 0.
\end{align*}
Solving for $\ka_2$ and $c_2$ we get
\begin{align*}
\ka_2 &= -\frac{r(1+A_2) + \frac{1}{2}\alpha \ka_0 B_2}{2\ka_0},  \quad
c_2 = \frac{(\alpha\ka_0-rm'(\ka_0) )(1+A_2) - (\frac{1}{2}\alpha \ka_0m'(\ka_0) +2r) B_2}{2\ka_0}. 
\end{align*}

\section{The Spectral Stability Problem}\label{section4}
In \S\ref{section3} we found a family of $2\pi$-periodic traveling waves for fixed $r>0$ and $\alpha\in\R\setminus\{0\}$ given by
\begin{equation}\label{S4:eq1}
u(x,t) = w^{\eps}\left(\ka(\eps)\left(x - c(\eps)t\right)\right) =: w^{\eps}(z).
\end{equation}
We attempt to study the spectral stability of \eqref{S4:eq1} as solutions to the original PDE \eqref{S1:eq1}.

\subsection{The Linearized Spectral Problem}
We fix the value of $\eps$ and consider a unique $2\pi$-periodic traveling wave solution to \eqref{S1:eq1} of the form \eqref{S4:eq1} where $z = \ka(\eps)(x - c(\eps)t)$. In this co-moving coordinate frame, we seek perturbed solutions to the PDE \eqref{S1:eq1} of the form
\begin{equation}\label{S4:eq2}
u(x,t) =  w^\eps(z) + \tilde{v}(z,t).
\end{equation}
Substituting \eqref{S4:eq2} into \eqref{S1:eq1}, the equation transforms to 
\begin{align*}
&\tilde{v}_t- \ka(\eps)c(\eps)(w^\eps)'(z) - \ka(\eps)c(\eps) \tilde{v}_z+\ka(\eps) \cM_{\ka(\eps)}(w^\eps)'(z) + \ka(\eps) \cM_{\ka(\eps)} \tilde{v}_z \\&+ \alpha(w^\eps(z)+\tilde{v})(\ka(\eps)(w^\eps)'(z)+\ka(\eps)\tilde{v}_{z})\\=& \ka^{2}(\eps)(w^\eps)''(z)+\ka^2(\eps) \tilde{v}_{zz} + r(w^\eps(z) + \tilde{v})(1 - w^\eps(z) - \tilde{v}).
\end{align*}
Since $w^\eps(z)$ satisfies \eqref{S3:eq4}, the equation governing the spatial perturbation $\tilde{v}(z,t)$ in the co-moving frame becomes
\begin{align*}
\tilde{v}_t - \ka(\eps)c(\eps) \tilde{v}_z + \ka(\eps) \cM_{\ka(\eps)} \tilde{v}_z + \alpha \ka(\eps) (w^\eps(z)\tilde{v})_z+\alpha\ka(\eps)\tilde{v}\tilde{v}_{z} = \ka^2(\eps) \tilde{v}_{zz} +r(1-2w^\eps(z))\tilde{v} -r\tilde{v}^{2}.
\end{align*}
Isolating the linear terms in $\tilde{v}$ and discarding the $\mathcal{O}(\tilde{v}^2)$ nonlinearities, we arrive at the following linearized equation
\begin{equation}\label{S4:eq3}
\tilde{v}_t = \ka^{2}(\eps) \tilde{v}_{zz} - \ka(\eps)(\cM_{\ka(\eps)} - c(\eps))\tilde{v}_z - \alpha \ka(\eps) (w^\eps(z) \tilde{v})_z + r(1 - 2w^\eps(z))\tilde{v}.
\end{equation}
Motivated by the study of spatially localized, finite-energy perturbations in the Galilean frame, we separate variables by assuming 
\begin{equation}\label{S4:eq4}
    \tilde{v}(z,t) = e^{\lambda t} v(z),
\end{equation} for a spectral parameter $\lambda \in \mathbb{C}$ and a spatial perturbation $v \in L^2(\R)$. Substituting \eqref{S4:eq4} into \eqref{S4:eq3} yields the following spectral eigenvalue problem
\begin{equation}\label{S4:eq5}
\lambda v = \mathcal{L}^\eps v,
\end{equation}
where for each $\eps$ the linearized operator $\mathcal{L}^\eps$ is defined as
\begin{align}\label{S4:eq6}
\begin{cases}
    \cL^{\eps}:L^{2}(\R)\to L^{2}(\R),\\ 
    \vspace{0.1cm} \mathcal{D}\left(\cL^{\eps}\right) =H^{\mu}(\R), \;\mu=\max\{2,\beta+1\},\\ \vspace{0.1cm}\cL^{\eps}\coloneqq p_{3}^{\eps}(z)\partial_{z}^{2} + p_{2}^{\eps}(z)\cM_{\ka(\eps)}\partial_{z}+p_{1}^{\eps}(z)\partial_{z} + p_{0}^{\eps}(z)\cI,
\end{cases}
\end{align}
with coefficients,
\begin{align}
   p_{3}^{\eps}(z) &\coloneqq \ka^{2}(\eps),\label{S4:eq7}\\
   p_{2}^\eps(z)&\coloneqq-\ka(\eps),\label{S4:eq8}\\
   p_{1}^{\eps}(z) &\coloneqq\ka(\eps)c(\eps) - \alpha\ka(\eps) w^{\eps}(z),\label{S4:eq9}\\
   p_{0}^{\eps}(z) &\coloneqq - \alpha \ka(\eps)(w^{\eps})'(z)+r(1 - 2w^{\eps}(z)),\label{S4:eq10}
\end{align}
for all $z\in\R$.

The viscous diffusion term $\ka^{2}(\eps)\partial_{z}^{2}$ is of order $2$ and the dispersive term $\cM_{\ka(\eps)}\partial_{z}$ is of pseudodifferential order $(\beta + 1)$ (by the growth condition in Assumption \ref{assumption2}).
Therefore, we define the domain of $\cL^{\eps}$ in \eqref{S4:eq6} as $\mathcal{D}\left(\cL^{\eps}\right) =H^{\mu}(\R), \;\mu=\max\{2,\beta+1\}$.
\vspace{10pt}
\begin{lemma}[Smooth and Periodic Coefficients]\label[lemma]{S4:lm1}
    The coefficients $p_{i}^{\eps}(z)$, $i=0,1,2,3$, and their derivatives are smooth, bounded and $2\pi$-periodic. In particular, $p_{i}^{\eps}(z)\in W^{1,\infty}(\R)$ for all $i=0,1,2,3$.
\end{lemma}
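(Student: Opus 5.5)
The claim follows by looking at the coefficients one at a time. We need three facts about each: smoothness, boundedness, and $2\pi$-periodicity.

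\emph{Constant coefficients.} For fixed $\eps\in(0,\eps_0)$, both $p_3^\eps=\ka^2(\eps)$ and $p_2^\eps=-\ka(\eps)$ are constants in $z$. Here $\ka(\eps)$ is the real number produced by \cref{S1:thm1}. Constants are trivially smooth, bounded and $2\pi$-periodic, and all their derivatives vanish.

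\emph{Coefficients built from the profile.} The coefficients $p_1^\eps$ and $p_0^\eps$ are affine combinations, with constant coefficients, of $w^\eps$ and $(w^\eps)'$. So everything reduces to showing that $w^\eps$ is smooth, $2\pi$-periodic and bounded, with bounded derivatives of every order. Periodicity is built into the construction, since $w^\eps\in X=H^{s+\mu}_{\text{per}}(\mathbb{T})$.

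\emph{Smoothness of $w^\eps$ (the main step).} \cref{S1:thm1} asserts smoothness, but only the regularity $w^\eps\in H^{s+\mu}_{\text{per}}$ comes directly out of the Lyapunov--Schmidt argument. I would therefore justify smoothness by elliptic bootstrapping on the profile equation \eqref{S3:eq4}. Rewrite it as
\[
\ka^2 w''-\ka(\cM_\ka-c)w'+rw = \alpha\ka ww'+rw^2 .
\]
The operator on the left has Fourier symbol $-\ka^2n^2-i\ka n(m(\ka n)-c)+r$. Its modulus is bounded below by $C(1+|n|^2)^{\mu/2}$ for all but finitely many $n$; this is the same estimate as for $g(n)$ in the proof of \cref{S3:lm1}, together with the fact that the real part $r-\ka^2n^2$ vanishes for at most one value of $|n|$. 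On the finite-dimensional set of remaining modes the equation imposes nothing bad, since those modes are already smooth. Hence, if $w\in H^{t}_{\text{per}}$ with $t>3/2$, the algebra property of $H^{t-1}_{\text{per}}$ puts the right-hand side in $H^{t-1}_{\text{per}}$, and therefore $w\in H^{t-1+\mu}_{\text{per}}\subset H^{t+1}_{\text{per}}$ because $\mu\ge2$. Starting from $t=s+\mu$ (which exceeds $3/2$ because $s>1/2$ and $\mu\ge 2$) and iterating gives $w^\eps\in H^\infty_{\text{per}}(\mathbb{T})$. By Sobolev embedding, $H^\infty_{\text{per}}(\mathbb{T})\subset C^\infty_{\text{per}}(\mathbb{T})$.

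\emph{Conclusion.} A smooth periodic function is a continuous function on a compact torus. Therefore $w^\eps$ and all its derivatives are bounded, with $\|\partial_z^k w^\eps\|_{L^\infty}\le C_k\|w^\eps\|_{H^{k+1}_{\text{per}}}$. Consequently $p_1^\eps$, $p_0^\eps$ and all their derivatives are bounded on $\R$. In particular each $p_i^\eps$ and $(p_i^\eps)'$ lies in $L^\infty(\R)$, which means $p_i^\eps\in W^{1,\infty}(\R)$ for $i=0,1,2,3$.
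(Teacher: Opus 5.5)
Your proposal is correct and follows essentially the same route as the paper: $p_3^\eps,p_2^\eps$ are constants, and $p_1^\eps,p_0^\eps$ inherit periodicity, smoothness and boundedness from $w^\eps$ through compactness of the torus. The one difference is that the paper simply takes smoothness of $w^\eps$ from the statement of \cref{S1:thm1}. Your elliptic bootstrap on the profile equation actually derives it from the $H^{s+\mu}_{\text{per}}$ regularity that the Lyapunov--Schmidt argument delivers, and it would work even using only the lower bound $|r-\ka^2n^2|\gtrsim n^2$ on the real part of the symbol.
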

\begin{proof}
    $w^\eps(z)$ is $2\pi$-periodic for all $z\in\mathbb{T}$ and hence its first and second-order derivatives are $2\pi$-periodic for all $z\in\mathbb{T}$.
    The wave $w^\eps(z)$ is smooth in the spatial variable, and hence its first and second-order derivatives are also smooth. 
    Hence $w^\eps(z)$ is bounded and has a finite supremum on a compact domain. Hence $w^\eps(z)\in L^{\infty}(\mathbb{T})$. Similarly $(w^\eps)'(z),(w^\eps)''(z)\in L^{\infty}(\mathbb{T})$ and are bounded. Therefore, the periodic extension of $w^\eps$ from $\mathbb{T}$ to $\R$ via $w^\eps(z+2\pi)=w^\eps(z)$ and that of its first and second-order derivatives are $2\pi$-periodic, bounded and smooth. Hence the coefficients $p_{i}^{\eps}(z)$, $i=0,1,2,3$, and their derivatives are smooth, bounded and $2\pi$-periodic. In particular, $p_{i}^{\eps}(z)\in W^{1,\infty}(\R)$ for all $i=0,1,2,3$.
\end{proof}
Also, note that $\cL^{\eps}$ is densely defined on $L^{2}(\R)$. Now we prove that it is closed in $L^{2}(\R)$. We follow the proof of \citep[see][Lemma 3.1.2]{Kapitula2013} and incorporate changes due to the presence of the nonlocal multiplier operator in our problem. Refer to \cref{B} for definitions of the resolvent set and the spectrum of a closed, densely defined linear operator.
\vspace{10pt}
\begin{proposition}[$\cL^{\eps}$ is Closed]\label[proposition]{S4:prop1}
   The parameterized operator $\cL^{\eps}$ defined by \eqref{S4:eq6} with coefficients defined by \eqref{S4:eq7}-\eqref{S4:eq10} is a closed operator in $L^{2}(\R)$.
\end{proposition}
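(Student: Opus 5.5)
The plan is to show that $\cL^\eps$ is a relatively bounded perturbation of a closed constant-coefficient Fourier multiplier operator, with relative bound zero. Closedness then follows from the standard perturbation result (Kato): if $A$ is closed and $B$ is $A$-bounded with relative bound less than $1$, then $A+B$ is closed on $\mathcal{D}(A)$.

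\textbf{The principal part.} Write $\cL^\eps = \cL_0^\eps + \cB^\eps$, where
\[
\cL_0^\eps \coloneqq \ka^2(\eps)\partial_z^2 - \ka(\eps)\cM_{\ka(\eps)}\partial_z, \qquad \cB^\eps \coloneqq p_1^\eps(z)\partial_z + p_0^\eps(z)\cI.
\]
By Lemma \ref{S4:lm1} the coefficients $p_1^\eps, p_0^\eps$ lie in $W^{1,\infty}(\R)$; the coefficients $p_3^\eps,p_2^\eps$ are constant. The operator $\cL_0^\eps$ is a Fourier multiplier on $L^2(\R)$ with symbol
\[
\sigma(\xi) = -\ka^2\xi^2 - i\ka\xi\, m(\ka\xi).
\]
Since $m$ is real, $\operatorname{Re}\sigma = -\ka^2\xi^2$ and $\operatorname{Im}\sigma = -\ka\xi m(\ka\xi)$, so
\[
|\sigma(\xi)|^2 = \ka^4\xi^4 + \ka^2\xi^2 m(\ka\xi)^2.
\]
Assumption \ref{assumption2} gives upper and lower bounds of the form $|\sigma(\xi)|^2 \simeq |\xi|^4 + |\xi|^{2\beta+2}$ for $|\xi|>M/\ka$. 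Because the two terms are nonnegative and cannot cancel, this is comparable to $(1+|\xi|^2)^\mu$ for large $|\xi|$, where $\mu=\max\{2,\beta+1\}$. Continuity of $m$ (Assumption \ref{assumption1}) bounds $\sigma$ on compact sets. Together these yield
\[
C^{-1}\|v\|_{H^\mu} \le \|\cL_0^\eps v\|_{L^2} + \|v\|_{L^2} \le C\|v\|_{H^\mu}.
\]
Hence the graph norm of $\cL_0^\eps$ is equivalent to the $H^\mu(\R)$ norm. Since $H^\mu$ is complete, $\cL_0^\eps$ is closed on $\mathcal{D}=H^\mu(\R)$. Equivalently, a sequence $v_n\to v$ in $L^2$ with $\cL_0^\eps v_n\to f$ is Cauchy in $H^\mu$, and the multiplier passes to the limit.

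\textbf{The lower-order part.} Because $\mu\ge 2>1$, interpolation (Plancherel combined with Young's inequality $|\xi|\le \delta|\xi|^\mu + C_\delta$) gives, for every $\delta>0$,
\[
\|\partial_z v\|_{L^2} \le \delta\|v\|_{H^\mu} + C_\delta\|v\|_{L^2}.
\]
Using $\|p_i^\eps\|_{L^\infty}<\infty$ and the graph-norm equivalence above, this becomes
\[
\|\cB^\eps v\|_{L^2} \le \delta' \|\cL_0^\eps v\|_{L^2} + C'_\delta\|v\|_{L^2},
\]
with $\delta'$ arbitrarily small. Thus $\cB^\eps$ is $\cL_0^\eps$-bounded with relative bound $0$, and Kato's theorem shows that $\cL^\eps$ is closed on $H^\mu(\R)$.

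\textbf{Expected obstacle.} The main difficulty is the lower bound on $|\sigma|$. It is where the nonlocal term enters, and it requires the observation that the real and imaginary parts of the symbol cannot cancel. When $\beta+1>2$, ellipticity comes from the dispersive part rather than the viscous part. This relies on the lower bound $C_1|\xi|^\beta\le|m(\xi)|$ in Assumption \ref{assumption2}, which is only imposed for large $|\xi|$; on bounded frequencies the $\|v\|_{L^2}$ term absorbs everything.
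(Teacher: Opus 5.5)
Your argument is correct. It uses the same splitting as the paper (constant-coefficient principal part plus the lower-order part $p_1^\eps\partial_z+p_0^\eps\cI$) and the same key symbol estimate, but it closes the argument differently.

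The paper does not pass through graph-norm equivalence and Kato's stability-of-closedness theorem. It works directly with a sequence $u_n\to u$, $\cL^\eps u_n\to v$:
\begin{itemize}
\item For large $\lambda>0$ it inverts $\cA^\eps-\lambda\cI$ using $|a(\xi)-\lambda|\gtrsim|\xi|^\mu+\lambda$.
\item It rewrites $\cL^\eps u_n=v_n$ as $(\cI+\mathcal{K}_\lambda)u_n=(\cA^\eps-\lambda\cI)^{-1}(v_n-\lambda u_n)$, where $\mathcal{K}_\lambda=(\cA^\eps-\lambda\cI)^{-1}\cC^\eps$ is small in $B(H^\mu)$.
\item It inverts $\cI+\mathcal{K}_\lambda$ by a Neumann series and passes to the limit in $H^\mu$.
\end{itemize}
In effect the paper reproves by hand the special case of Kato's theorem that you quote. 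What its route buys is the explicit fact that large positive $\lambda$ lie in the resolvent set, and the same device is reused in \cref{S4:lem7} for the Bloch operators.

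Your route is shorter and assumes less.
\begin{itemize}
\item You need only $p_0^\eps,p_1^\eps\in L^\infty$. The paper's smallness of $\mathcal{K}_\lambda$ in $B(H^\mu)$ uses $\cC^\eps\in B(H^\mu,H^{\mu-1})$, which needs roughly $\mu-1$ derivatives of the coefficients. That is more than the $W^{1,\infty}$ it cites once $\mu>2$.
\item The paper's limiting step actually requires $(\cA^\eps-\lambda\cI)^{-1}\in B(L^2,H^\mu)$. The paper only states a gain of $\mu-1$ derivatives. The full gain is exactly your graph-norm equivalence, so your argument supplies that step explicitly.
\item Your equivalence is the $\R$-analogue of the coercivity estimate in \cref{S4:lemz}, which the paper proves separately on the torus.
\end{itemize}
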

\begin{proof}
   Assume that $\{u_n\}_{n=1}^{\infty}\subset H^\mu(\R)$ converges to $u$ in $\|.\|_{L^{2}}$ and that $v_n\coloneqq\cL^{\eps}u_n$ converges to $v$ in $\|.\|_{L^{2}}$. We shall show that $u\in H^\mu(\R)$ and that $\cL^{\eps}u=v$. We decompose the linearized operator as $\cL^\eps=\cA^\eps+\cC^\eps$, where $\cA^\eps=p_{3}^{\eps}(z)\partial^{2}_{z}+p_{2}^{\eps}(z)\cM_{\ka(\eps)}\partial_z$ and $\cC^{\eps}=p_{1}^{\eps}(z)\partial_z+p_{0}^{\eps}(z)\mathcal{I}$. Suppose $\lambda\in\R^{+}$ then taking Fourier transform, the symbol of the operator $\cA^\eps-\lambda \mathcal{I}$ is 
   \begin{equation*}
   a(\xi) - \lambda:= -p_{3}^{\eps}(z) \xi^2+i \xi p_{2}^{\eps}(z) m(\ka(\eps) \xi)-\lambda.
   \end{equation*}
   By Assumption \ref{assumption2} and \cref{S4:lm1}, we see that there exist positive constants $M$, $\tilde{C_1}$ and $\tilde{C_2}$ such that 
\begin{equation*}
       |\operatorname{Re}(a(\xi)-\lambda)|=|p_{3}^{\eps}(z)  \xi^2+\lambda|\geq \tilde{C_1} \left(|\xi|^2+\lambda \right)  
   \end{equation*}
and
\begin{equation*}
       |\operatorname{Im}(a(\xi)-\lambda)|= |\xi p_{2}^{\eps}(z) m(\ka(\eps) \xi)|\geq \tilde{C_2} |\xi|^{\beta+1} , \quad  |\xi| > M.  
\end{equation*}
Therefore, for some positive constant $\tilde{C}$
\begin{align}
    |(a(\xi)-\lambda)|&\geq\max\left\{|\operatorname{Re}(a(\xi)-\lambda)|,|\operatorname{Im}(a(\xi)-\lambda)|\right\}\notag\\
    & \geq \tilde{C}|\xi|^\mu+\lambda >0, \quad  |\xi|> M.\label{S4:A}
\end{align}
For $|\xi|\leq M$, again using Assumption \ref{assumption2} and \cref{S4:lm1}, $\operatorname{Re}(a(\xi))$ and $\operatorname{Im}(a(\xi))$ are bounded and so $a(\xi)$ is bounded. Therefore, for sufficiently large $\lambda$, $|a(\xi)-\lambda|>0$.
Hence, for sufficiently large $\lambda$, the operator $\cA^\eps-\lambda \mathcal{I}$ is invertible and the symbol of its inverse $\frac{1}{a(\xi)-\lambda}$ is in $L^\infty(\R)$. By Plancherel's Theorem, the resolvent $(\cA^\eps-\lambda \mathcal{I})^{-1}\in B(L^{2}(\R))$. Using \eqref{S4:A}, we have 
\begin{equation*}
   \|(\cA^\eps-\lambda \mathcal{I})^{-1}\|_{B(H^{s},H^{s+\mu-1})}=\sup\limits_{\xi\in\R}\frac{\left(1+|\xi|^{2}\right)^\frac{\mu-1}{2}}{|a(\xi)-\lambda|}
   \leq \sup\limits_{\xi\in\R}\frac{\left(1+|\xi|^{2}\right)^\frac{\mu-1}{2}}{\tilde{C}|\xi|^\mu+\lambda}
   \leq C\lambda^{\frac{-1}{\mu}}.
\end{equation*}
Therefore, for all $s\geq 0$ and sufficiently large $\lambda\in\R^{+}$, the operator $\cA^\eps-\lambda \mathcal{I}$ is invertible with the resolvent $(\cA^\eps-\lambda \mathcal{I})^{-1}\in B(H^{s},H^{s+\mu-1})$ and satisfies 
\begin{equation}\label{S4:C}
\lim\limits_{\lambda\to \infty}\|(\cA^\eps-\lambda \mathcal{I})^{-1}\|_{B(H^{s},H^{s+\mu-1})}=0.
\end{equation} 
Since, $p_{1}^\eps(z)$, $p_{0}^\eps(z)\in W^{1,\infty}(\R)$ as proved in \cref{S4:lm1}, we can see that $\cC^{\eps}\in B(H^\mu, H^{\mu-1})$. We can rewrite the equation $\cL^{\eps}u_n = v_n$ as
\begin{equation*}
(\mathcal{I}+\mathcal{K}_{\lambda})u_n=(\cA^\eps-\lambda \mathcal{I})^{-1}(v_n-\lambda u_n),
\end{equation*}
where $\mathcal{K}_\lambda=(\cA^\eps-\lambda \mathcal{I})^{-1}\cC^\eps$. Since $\mu \geq 2$, $2\mu-2\geq \mu$, and $H^{2\mu-2}\subset H^{\mu}$, therefore, $\mathcal{K}_\lambda\in B(H^\mu)$ and from \eqref{S4:C}, $\lim\limits_{\lambda\to\infty}\|\mathcal{K}_\lambda\|_{B(H^\mu)}=0$. Therefore, we can choose $\lambda\in\R^+$, sufficiently large, such that by result on Neumann series \citep[see][\S 2.5]{Buffoni2003}, the operator $\mathcal{I}+\mathcal{K}_\lambda$ is boundedly invertible and 
\begin{equation}\label{S4:D}
u_n=(\mathcal{I}+\mathcal{K}_\lambda)^{-1}(\cA^\eps-\lambda \mathcal{I})^{-1}(v_n-\lambda u_n).
\end{equation}
From here we can infer that $u_n$ is a Cauchy sequence in $H^\mu(\R)$, and taking the limit $n\to\infty$ in \eqref{S4:D} yields
\begin{equation}\label{S4:E}
u=(\mathcal{I}+\mathcal{K}_\lambda)^{-1}(\cA^\eps-\lambda \mathcal{I})^{-1}(v-\lambda u)\in H^\mu(\R).
\end{equation}
From \eqref{S4:E} it easy to see that $\cL^\eps u=v$.
\end{proof}

\subsection{Floquet--Bloch Transform and Bloch Operators}
As proved before the linearized operator $\cL^\eps$ is a closed, densely defined linear operator with $2\pi$-periodic coefficients. Therefore, $\cL^\eps$ commutes with the translation operator defined in \eqref{S4:eq16} with $l=2\pi$. By \cref{S4:prop2} we conclude that $\cL^\eps$ has no $L^2$-point spectrum and \eqref{S4:eq15} establishes the fact that \begin{equation}\label{S4:eqA}
\sigma(\cL^\eps)_{|L^2(\mathbb{R})} = \sigma_{\text{ess}}(\cL^\eps)_{|L^2(\mathbb{R})}.
\end{equation}
Therefore, the spectrum of the linearized operator $\cL^\eps$ is purely essential and consists of continuous bands. A common method to characterize this essential spectrum is to use Floquet theory and parameterize the essential spectrum by the eigenvalues of an associated one-parameter family of linear operators.

However, since the linearized operator $\cL^\eps$ contains the nonlocal multiplier operator, we cannot decompose the spectral problem associated with $\cL^\eps$ into a system of ODEs. Hence, we cannot use classical Floquet theory to parameterize the essential spectrum of $\cL^\eps$. Instead, we use the Floquet--Bloch transform to parameterize the essential spectrum of $\cL^\eps$. We apply the Floquet--Bloch transform defined in \eqref{S4:eq17} to $\lambda v=\cL^\eps v$. Since $\cL^\eps:L^{2}(\R)\to L^{2}(\R)$ is a closed, densely defined operator with domain $\mathcal{D}(\mathcal{\cL^\eps})=H^\mu(\R)$ and $2\pi$-periodic coefficients we use \cref{C:prop1} to transform our spectral problem \eqref{S4:eq5} on the real line into a family of eigenvalue problems on a compact domain $\mathbb{T}$ as follows
\begin{align*}
 \mathcal{B}(\lambda v)( \tau,z) &= \mathcal{B}(\cL^\eps v)(\tau,z) \nonumber \\\implies
    \lambda(\mathcal{B}v)(\tau,z)&=(\cB\cL^\eps\cB^{-1})(\cB v)(\tau,z)\\
\implies\lambda(\mathcal{B}v)(\tau,z)&= \cL_{\tau}^{\eps}(\mathcal{B}v)(\tau,z)\nonumber \\\implies
    \lambda W(z) &= \cL_\tau^\eps W(z),
\end{align*}
where for each $\tau\in[-1/2,1/2)$, abusing notation, we define $(\mathcal{B}v)(\tau,\cdot)=W(\cdot)\in L^{2}_{\text{per}}([0,2\pi])$ which is $2\pi$-periodic in $z$ and using \eqref{S7:eq3} we arrive at the following equation
\begin{align*}
        \lambda W &=\ka^{2}(\eps)\left(\partial_{z} +i \tau\right)^{2}W- \ka(\eps)\left(\cM_{\ka(\eps),\tau}-c(\eps)\right)\left(\partial_{z}+i\tau\right) W -\alpha\ka(\eps)\left(\partial_{z}+i\tau\right) (w^\eps W) \\ & \qquad
        +r\left(1-2w^{\eps}\right)W,
\end{align*}
where
\begin{equation*}
\cM_{\ka(\eps),\tau}e^{inz}=m(\ka(\eps)(\tau+n))e^{inz}.
\end{equation*}
Therefore, we arrive at a family of spectral problems
\begin{equation}\label{S4:eq21}
\lambda W=\cL^{\eps}_{\tau}W,
\end{equation}
indexed by $\tau \in [-1/2, 1/2)$, with the Bloch operators defined as
\begin{align}\label{S4:eq22}
\begin{cases}
\cL^{\eps}_{\tau}:L^{2}_{\text{per}}([0,2\pi])\to L^{2}_{\text{per}}([0,2\pi]),\\ 
\vspace{0.1cm} \mathcal{D}\left(\cL^{\eps}_{\tau}\right) =H^{\mu}_{\text{per}}([0,2\pi]), \;\mu=\max\{2,\beta+1\},\\ \vspace{0.1cm}
\cL^{\eps}_{\tau}\coloneqq p_3^\eps(z)(\partial_z + i\tau)^2 + p_2^\eps(z)\mathcal{M}_{\ka(\eps),\tau}(\partial_z + i\tau) +p_1^\eps(z)(\partial_z + i\tau) +p_0^\eps(z)\cI,
\end{cases}
\end{align}
where the coefficients $p_{i}^{\eps}(z)$, $i=0,1,2,3$ are as defined in \eqref{S4:eq7}-\eqref{S4:eq10}.
\vspace{10pt}
\begin{lemma}[$\mathcal{L}^{\eps}_{\tau}$ is Closed]\label[lemma]{S4:lm10}The parameterized Bloch operator $\cL^{\eps}_{\tau}($indexed by $\tau \in [-1/2, 1/2))$ defined by \eqref{S4:eq22} with coefficients defined by \eqref{S4:eq7}-\eqref{S4:eq10} is a closed operator in $L^{2}_{\mathrm{per}}([0,2\pi])$.
\end{lemma}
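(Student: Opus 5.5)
We will mimic the proof of \cref{S4:prop1}, now on the torus, with Fourier series in place of the Fourier transform. Take $\{u_n\}\subset H^\mu_{\text{per}}([0,2\pi])$ with $u_n\to u$ and $\cL^\eps_\tau u_n\to v$ in $L^2_{\text{per}}$. Our goal is to show $u\in H^\mu_{\text{per}}$ and $\cL^\eps_\tau u=v$. As before, split $\cL^\eps_\tau=\cA^\eps_\tau+\cC^\eps_\tau$, where
\[
\cA^\eps_\tau=\ka^2(\eps)(\partial_z+i\tau)^2-\ka(\eps)\cM_{\ka(\eps),\tau}(\partial_z+i\tau),\qquad \cC^\eps_\tau=p_1^\eps(\partial_z+i\tau)+p_0^\eps\cI.
\]
Since $p_3^\eps,p_2^\eps$ are constants, $\cA^\eps_\tau$ is a Fourier multiplier on $\{e^{inz}\}$ with symbol
\[
a_\tau(n)=-\ka^2(\eps)(n+\tau)^2-i\ka(\eps)(n+\tau)\,m(\ka(\eps)(n+\tau)).
\]

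The key estimate is the discrete analogue of \eqref{S4:A}. For $\lambda>0$, the real part gives $|\operatorname{Re}(a_\tau(n)-\lambda)|=\ka^2(n+\tau)^2+\lambda$. For $|n|$ large, Assumption \ref{assumption2} gives $|\operatorname{Im}\,a_\tau(n)|\ge \tilde C_2|n+\tau|^{\beta+1}$, uniformly in $\tau\in[-1/2,1/2)$ because $|n+\tau|\ge|n|-1/2$. Combining these yields $|a_\tau(n)-\lambda|\ge \tilde C|n|^\mu+\lambda$ for $|n|>N$. The finitely many $|n|\le N$ are handled by continuity of $m$ (Assumption \ref{assumption1}): $a_\tau(n)$ is bounded there, so $\lambda$ large makes $|a_\tau(n)-\lambda|\ge\lambda/2$. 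Note that $m(0)$ is finite even when $n+\tau=0$, so no singularity arises there.

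Hence $(\cA^\eps_\tau-\lambda)^{-1}$ is the multiplier $1/(a_\tau(n)-\lambda)$ and lies in $B(H^s_{\text{per}},H^{s+\mu-1}_{\text{per}})$, with norm bounded by
\[
\sup_n\frac{(1+n^2)^{(\mu-1)/2}}{\tilde C|n|^\mu+\lambda}\le C\lambda^{-1/\mu}\to0 \quad\text{as } \lambda\to\infty.
\]
This follows from Parseval on the periodic Sobolev norms. By \cref{S4:lm1}, $p_1^\eps,p_0^\eps$ are smooth and periodic, so $\cC^\eps_\tau\in B(H^\mu_{\text{per}},H^{\mu-1}_{\text{per}})$. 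Then $\mathcal K_\lambda=(\cA^\eps_\tau-\lambda)^{-1}\cC^\eps_\tau\in B(H^\mu_{\text{per}})$, using $2\mu-2\ge\mu$ since $\mu\ge2$, and $\|\mathcal K_\lambda\|\to0$. Fixing $\lambda$ large, a Neumann series inverts $\cI+\mathcal K_\lambda$, and we obtain
\[
u_n=(\cI+\mathcal K_\lambda)^{-1}(\cA^\eps_\tau-\lambda)^{-1}(v_n-\lambda u_n).
\]
Here the right-hand side uses $(\cA^\eps_\tau-\lambda)^{-1}\in B(L^2_{\text{per}},H^{\mu}_{\text{per}})$. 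This holds because $|a_\tau(n)-\lambda|\gtrsim(1+n^2)^{\mu/2}$ gives a full gain of $\mu$ derivatives from $L^2$; the $\mu-1$ gain is needed only for the smallness of $\mathcal K_\lambda$. Consequently $u_n$ is Cauchy in $H^\mu_{\text{per}}$, its limit is $u\in H^\mu_{\text{per}}$, and passing to the limit in $\cL^\eps_\tau u_n=v_n$ gives $\cL^\eps_\tau u=v$.

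The main obstacle is the uniform lower bound on the symbol. We need the shift by $\tau$ and the possible non-smoothness of $m$ at $0$ (as in the Benjamin--Ono case) to cause no trouble, and the case $\beta+1<2$, where the real part alone must carry the $|n|^\mu$ growth. Both are handled by taking $\max$ of the real and imaginary bounds as in \eqref{S4:A}, together with the continuity of $m$ at $0$ for the finitely many low modes.
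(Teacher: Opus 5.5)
Your proposal is correct and follows the same route as the paper. The paper simply defers to the argument of \cref{S4:prop1}: split off the constant-coefficient part $\cA^\eps_\tau$, invert $\cA^\eps_\tau-\lambda$ as a Fourier multiplier whose norm decays like $\lambda^{-1/\mu}$, and absorb $\cC^\eps_\tau$ by a Neumann series. Your explicit remark that, for fixed $\lambda$, $(\cA^\eps_\tau-\lambda)^{-1}\in B(L^2_{\mathrm{per}},H^\mu_{\mathrm{per}})$ (a full gain of $\mu$ derivatives) supplies a step the paper leaves implicit when it concludes that $u_n$ is Cauchy in $H^\mu$.
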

\begin{proof}
This proof can be constructed like the proof of closedness of $\cL^\eps$ in $L^2(\R)$ as done in \cref{S4:prop1}. 
\end{proof}

\subsection{Characterization of the Spectrum of the Linearized Operator}
We now prove that for each $\tau \in [-1/2, 1/2)$ the corresponding Bloch operator as defined in \eqref{S4:eq22} has only the point spectrum.
\vspace{10pt}
\begin{lemma}[Spectrum-Characterization of $\cL^\eps_\tau$]\label[lemma]{S4:lem7}
For each $\tau \in [-1/2, 1/2)$, the spectrum of $\cL_\tau^\eps$ on $L_{\mathrm{per}}^2([0,2\pi])$ consists entirely of isolated eigenvalues of finite multiplicities. Thus, for each $\tau \in [-1/2,1/2)$, 
\begin{equation*}
\sigma(\cL_\tau^\eps)_{|L_{\mathrm{per}}^2([0,2\pi])}=\sigma_{\mathrm{pt}}(\cL_\tau^\eps)_{|L_{\mathrm{per}}^2([0,2\pi])}.
\end{equation*}
\end{lemma}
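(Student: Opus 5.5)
We will show that for each fixed $\tau\in[-1/2,1/2)$ the resolvent of $\cL^\eps_\tau$ is compact on $L^2_{\mathrm{per}}([0,2\pi])$ at some point $\lambda_0$ of the resolvent set. Standard spectral theory (Kato) then gives that the spectrum consists only of isolated eigenvalues of finite algebraic multiplicity, with no accumulation point in $\C$.

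\textbf{Step 1: the principal part.} As in the proof of \cref{S4:prop1}, we split $\cL^\eps_\tau=\cA^\eps_\tau+\cC^\eps_\tau$. The principal part is
\[
\cA^\eps_\tau=\ka^2(\eps)(\partial_z+i\tau)^2-\ka(\eps)\cM_{\ka(\eps),\tau}(\partial_z+i\tau).
\]
Its coefficients are constant, so it is diagonal in the basis $\{e^{inz}\}$ with symbol
\[
a_\tau(n)=-\ka^2(\eps)(n+\tau)^2-i\ka(\eps)(n+\tau)\,m(\ka(\eps)(n+\tau)).
\]
The lower-order part $\cC^\eps_\tau=p_1^\eps(\partial_z+i\tau)+p_0^\eps\cI$ has smooth periodic coefficients by \cref{S4:lm1}. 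The same lower bound as in \eqref{S4:A} holds in the discrete setting: for large real $\lambda$,
\[
|a_\tau(n)-\lambda|\ge \tilde C|n+\tau|^\mu+\lambda \quad\text{for } |n|\ge N,
\]
and for the finitely many remaining $n$ we have $|a_\tau(n)-\lambda|>0$ once $\lambda$ is large. So $(\cA^\eps_\tau-\lambda)^{-1}$ is a Fourier multiplier whose multiplier is $O(|n|^{-\mu})$. Hence it maps $L^2_{\mathrm{per}}$ boundedly into $H^\mu_{\mathrm{per}}$.

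\textbf{Step 2: the full resolvent.} Write
\[
\cL^\eps_\tau-\lambda=(\cA^\eps_\tau-\lambda)\bigl(\cI+(\cA^\eps_\tau-\lambda)^{-1}\cC^\eps_\tau\bigr).
\]
On $H^\mu_{\mathrm{per}}$, the operator $\cK_\lambda=(\cA^\eps_\tau-\lambda)^{-1}\cC^\eps_\tau$ has norm $\to0$ as $\lambda\to\infty$, by the analogue of \eqref{S4:C}. Neumann series then make $\cI+\cK_\lambda$ invertible for large $\lambda$. It follows that $\lambda$ lies in the resolvent set and $(\cL^\eps_\tau-\lambda)^{-1}:L^2_{\mathrm{per}}\to H^\mu_{\mathrm{per}}$ is bounded.

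\textbf{Step 3: compactness.} The embedding $H^\mu_{\mathrm{per}}([0,2\pi])\hookrightarrow L^2_{\mathrm{per}}([0,2\pi])$ is compact by Rellich on the torus, since $\mu\ge2>0$. Concretely, $(1+|n|^2)^{-\mu/2}\to0$, so the embedding is a norm limit of finite-rank truncations. Composing, the resolvent $(\cL^\eps_\tau-\lambda)^{-1}$ is compact on $L^2_{\mathrm{per}}$.

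\textbf{Step 4: conclusion.} An operator with a compact resolvent at one point has compact resolvent on its whole resolvent set. Its spectrum is then discrete: every $\mu'\in\sigma(\cL^\eps_\tau)$ corresponds to a nonzero eigenvalue $(\mu'-\lambda)^{-1}$ of the compact operator, via the spectral mapping for resolvents (Kato, Thm. III.6.29). Such eigenvalues are isolated and have finite-dimensional Riesz projections. This yields $\sigma=\sigma_{\mathrm{pt}}$.

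\textbf{Main obstacle.} The delicate step is Step 2, namely making the relative smallness of $\cC^\eps_\tau$ precise. Mirroring \cref{S4:prop1}, I would first show that $(\cA^\eps_\tau-\lambda)^{-1}$ maps $H^{\mu-1}_{\mathrm{per}}\to H^{\mu}_{\mathrm{per}}$ with norm $O(\lambda^{-1/\mu})$, which is enough because $\cC^\eps_\tau$ has order $1\le\mu-1$. This uses the bound $\sup_n (1+n^2)^{1/2}/(\tilde C|n+\tau|^\mu+\lambda)\to0$. The bound needs uniform control of the low modes. There the shift $\tau$ matters, but only finitely many $n$ are involved, so taking $\lambda$ large handles them.
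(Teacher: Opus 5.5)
Your proposal is correct and follows essentially the same route as the paper: split off the constant-coefficient principal part $\cA^\eps_\tau$, invert $\cL^\eps_\tau-\lambda$ for large real $\lambda$ by a Neumann series, get a compact resolvent from the compact embedding $H^\mu_{\mathrm{per}}\hookrightarrow L^2_{\mathrm{per}}$ on the torus, and conclude with Kato's Theorem III.6.29. The only difference is which side you factor on: the paper writes $\cL^\eps_\tau-\lambda=(\cI+\cC^\eps_\tau(\cA^\eps_\tau-\lambda)^{-1})(\cA^\eps_\tau-\lambda)$ and runs the Neumann series in $B(L^2_{\mathrm{per}})$, which needs only bounded coefficients, whereas your right factorization runs it in $B(H^\mu_{\mathrm{per}})$; that is equally valid here because $p_0^\eps,p_1^\eps$ are smooth.
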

\begin{proof}
   Fixing $\tau \in [-1/2,1/2)$,  we decompose the Bloch operator as $\cL_\tau^\eps=\cA_\tau^\eps+\cC_\tau^\eps$, where $\cA_\tau^\eps=p_{3}^{\eps}(z)(\partial_z + i\tau)^2+p_{2}^{\eps}(z)\cM_{\ka(\eps),\tau}(\partial_z + i\tau)$ and $\cC_\tau^{\eps}=p_{1}^{\eps}(z)(\partial_z + i\tau)+p_{0}^{\eps}(z)\mathcal{I}$. For any $\lambda\in\R^+$ we can write
\begin{equation}\label{S4:eqF}
   \cL_\tau^\eps-\lambda\mathcal{I}=(\mathcal{I}+\mathcal{K}_{\lambda,\tau})(\cA_\tau^\eps-\lambda\mathcal{I}),
   \end{equation}
   where $\mathcal{K}_{\lambda,\tau}=\cC_\tau^\eps(\cA_\tau^\eps-\lambda\mathcal{I})^{-1}$. Using the Rellich--Kondrachov Theorem \citep[see][Proposition 3.4]{Taylor1996} and following the steps as in proof of \cref{S4:lm10}, we can prove that $\mathcal{K}_{\lambda,\tau}\in B(L_{\text{per}}^2([0,2\pi]))$. We can also show that
$\lim\limits_{\lambda\to\infty}\|\mathcal{K}_{\lambda,\tau}\|_{B(L_{\text{per}}^2([0,2\pi]))}=0$. Therefore, we can choose $\lambda_0\in\R^+$, sufficiently large, such that for all $\lambda>\lambda_0$ the operator $\mathcal{I}+\mathcal{K}_{\lambda,\tau}$ is boundedly invertible. Therefore, from \eqref{S4:eqF}, $\cL_\tau^\eps-\lambda\mathcal{I}$ is invertible and its inverse is bounded on $L_{\text{per}}^2([0,2\pi])$. Hence, for all $\lambda>\lambda_0$, $\lambda\in\rho(\cL_\tau^\eps)$ and $\rho(\cL_\tau^\eps)$ is nonempty.

Let $\lambda\in \rho(\cL_\tau^\eps)$. Then using the Rellich--Kondrachov Theorem we see that  $(\cL_\tau^\eps-\lambda\mathcal{I})^{-1}:L_{\text{per}}^2([0,2\pi])\to L_{\text{per}}^2([0,2\pi])$ is a compact operator. Using \citep[see][Chapter III, \S6, Theorem 6.29]{Kato1980} since the resolvent $(\cL_\tau^\eps-\lambda\mathcal{I})^{-1}$ of the closed operator $\cL_\tau^\eps$ is compact for some $\lambda\in\rho(\cL_\tau^\eps)$, we can conclude that the spectrum of $\cL_\tau^\eps$ on $L_{\text{per}}^2([0,2\pi])$  consists entirely of isolated eigenvalues with finite multiplicities. Such a spectrum of $\cL_\tau^\eps$ is a purely discrete point spectrum and for each $\tau \in [-1/2,1/2)$, we have $\sigma(\cL_\tau^\eps)_{|L_{\text{per}}^2([0,2\pi])}=\sigma_{\text{pt}}(\cL_\tau^\eps)_{|L_{\text{per}}^2([0,2\pi])}$. 
\end{proof}
We have already shown in \eqref{S4:eqA} that $\cL^\eps$ has only the essential spectrum (and no point spectrum), which is difficult to compute. Instead, we can now use the spectrum of the Bloch operators to characterize the essential spectrum of $\cL^\eps$.
\vspace{10pt}
\begin{proposition}[Spectrum-Characterization of $\cL^\eps$]\label[proposition]{S4:prop8}
Consider the closed operator $\cL^\eps : H^\mu(\mathbb{R}) \subset L^2(\mathbb{R}) \to L^2(\mathbb{R})$ and the associated closed Bloch operator for each $\tau \in [-1/2, 1/2)$ as $\cL_\tau^\eps : H_{\mathrm{per}}^\mu([0,2\pi]) \subset L_{\mathrm{per}}^2([0,2\pi])\to L_{\mathrm{per}}^2([0,2\pi])$. Then for any $\lambda \in \mathbb{C}$, the following statements are equivalent
\begin{enumerate}
    \item $\lambda \in \sigma(\cL^\eps)_{|L^2(\mathbb{R})}$.\label{S4:prop8.1}
    \item There exists $\tau \in [-1/2, 1/2)$ such that $\lambda \in \sigma(\cL_\tau^\eps)_{|L_{\mathrm{per}}^2([0,2\pi])}$.\label{S4:prop8.2}
    \item There exists a nonzero function $W$ of the form $W(z) = e^{i\tau z}v(z)$ for some $\tau \in [-1/2, 1/2)$ and $v \in H_{\mathrm{per}}^\mu([0,2\pi])$ such that $(\cL^\eps - \lambda I)W = 0$.\label{S4:prop8.3}
\end{enumerate}
\end{proposition}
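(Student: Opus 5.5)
The plan is to prove the cycle $1\Rightarrow 2\Rightarrow 3\Rightarrow 1$, or more naturally $1\Leftrightarrow 2$ and $2\Leftrightarrow 3$. The basic tool is the Floquet--Bloch transform $\cB$ of \cref{C:prop1}. It is a unitary isomorphism
\begin{equation*}
L^2(\R)\to L^2\bigl([-1/2,1/2);L^2_{\mathrm{per}}([0,2\pi])\bigr),
\end{equation*}
and it maps $H^\mu(\R)$ onto the corresponding space of $H^\mu_{\mathrm{per}}$-valued functions. Under $\cB$ the operator $\cL^\eps$ becomes the direct integral of the Bloch operators $\cL^\eps_\tau$, in the sense that
\begin{equation*}
\cB\cL^\eps\cB^{-1}=\int^{\oplus}_{[-1/2,1/2)}\cL^\eps_\tau\,d\tau.
\end{equation*}
Consequently $\cL^\eps-\lambda\cI$ is boundedly invertible on $L^2(\R)$ if and only if $\cL^\eps_\tau-\lambda\cI$ is invertible for every $\tau$, with a resolvent bound that is uniform in $\tau$.

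For $2\Leftrightarrow 3$, \cref{S4:lem7} shows that $\lambda\in\sigma(\cL^\eps_\tau)$ exactly when $\lambda$ is an eigenvalue. So there is a nonzero $v\in H^\mu_{\mathrm{per}}$ with $\cL^\eps_\tau v=\lambda v$. Set $W(z)=e^{i\tau z}v(z)$. The conjugation identity $\cL^\eps(e^{i\tau z}v)=e^{i\tau z}\cL^\eps_\tau v$ then gives $(\cL^\eps-\lambda)W=0$. For the differential parts this follows from $\partial_z(e^{i\tau z}v)=e^{i\tau z}(\partial_z+i\tau)v$. For the multiplier it follows from the Fourier modes $e^{i(n+\tau)z}$, on which $\cM_{\ka(\eps)}$ acts by $m(\ka(\eps)(n+\tau))$, which is exactly the definition of $\cM_{\ka(\eps),\tau}$. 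The argument runs in reverse for $3\Rightarrow 2$. Note that $W$ in item 3 is a bounded Bloch solution and not an $L^2(\R)$ function, so the equation is understood pointwise or distributionally.

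For $2\Rightarrow 1$, take $\lambda$ to be an eigenvalue of $\cL^\eps_{\tau_0}$ with eigenfunction $v$. I would construct a Weyl sequence $W_n(z)=\chi(z/n)e^{i\tau_0 z}v(z)$, where $\chi$ is a smooth cutoff. The commutator $[\cL^\eps,\chi(\cdot/n)]$ has norm $\mathcal{O}(1/n)$ relative to $\|W_n\|_{L^2}\sim\sqrt n$. This is clear for the local terms. For the term $\cM\partial_z$ one needs a commutator estimate for Fourier multipliers with a slowly varying cutoff, using the continuity of $m$ and \ref{assumption2}. An alternative that avoids this is to argue by contradiction on the Bloch side: if $\lambda\in\rho(\cL^\eps)$, then the fibered resolvent would be essentially bounded in $\tau$. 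Continuity of $\tau\mapsto\cL^\eps_\tau$ in the graph-norm sense would then give invertibility at $\tau_0$, contradicting the eigenvalue.

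For $1\Rightarrow 2$, argue by contrapositive. Suppose $\lambda\in\rho(\cL^\eps_\tau)$ for all $\tau$. The family $\tau\mapsto\cL^\eps_\tau$ is continuous from $H^\mu_{\mathrm{per}}$ to $L^2_{\mathrm{per}}$, since its symbol is continuous in $\tau$ by \ref{assumption1}. Together with compactness of $[-1/2,1/2]$ this shows that $\tau\mapsto(\cL^\eps_\tau-\lambda)^{-1}$ is continuous and uniformly bounded. The endpoint $\tau=1/2$ is identified with $\tau=-1/2$ through conjugation by $e^{iz}$. Assembling the fibers then yields a bounded inverse of $\cL^\eps-\lambda$ on $L^2(\R)$.

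I expect the main obstacle to be the continuity in $\tau$ of the resolvents, or equivalently the commutator estimate in the Weyl-sequence route. The symbol $m$ is only continuous at $0$, and the fibers near $n+\tau=0$ need separate handling. There I would use the uniform lower bound on the symbol of $\cA^\eps_\tau-\lambda$ from the proof of \cref{S4:lem7} for large $\lambda$, and control the remaining finitely many modes by continuity of $m$.
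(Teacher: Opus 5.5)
Your proposal is correct and follows essentially the paper's approach. The paper's proof cites Johnson's Proposition 3.1, which is the standard Floquet--Bloch/direct-integral characterization, together with \cref{S4:lem7}, and you reconstruct that standard argument explicitly: the conjugation identity plus discreteness for $2\Leftrightarrow 3$, a Weyl sequence or fibered resolvent for $2\Rightarrow 1$, and compactness in $\tau$ for $1\Rightarrow 2$.

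One caution concerns your claim that $\tau\mapsto\cL^\eps_\tau$ is norm-continuous from $H^\mu_{\mathrm{per}}$ to $L^2_{\mathrm{per}}$. Pointwise continuity of the symbol in $\tau$ does not give this when $\mu=\beta+1$, because \ref{assumption1}--\ref{assumption2} control only the size of $m$ at infinity; for example, $m(\xi)=|\xi|^\beta(2+\sin\xi^2)$ with $\beta\ge 1$ is admissible and breaks norm continuity. The uniform resolvent bound should therefore come from the route in your last paragraph. There you get norm continuity in $B(L^2_{\mathrm{per}})$ of $(\cL^\eps_\tau-\lambda')^{-1}$ for large real $\lambda'$, since the symbol of $\cA^\eps_\tau-\lambda'$ has real part $-\ka(\eps)^2(n+\tau)^2-\lambda'$ independently of $m$. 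You then pass to general $\lambda$ by the resolvent identity.
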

\begin{proof}
Refer \citep[Proposition 3.1]{Johnson2013} and use \cref{S4:lem7}.
\end{proof}
From \cref{S4:prop8} we obtain the following characterization of the spectrum of the linearized operator $\cL^{\eps}$ in terms of the union of point spectra of the Bloch operators $\cL^\eps_\tau$
\begin{equation}\label{S4:eq23}  \sigma(\cL^\eps)_{|L^2(\mathbb{R})}=\bigcup \limits_{\tau \in [-1/2, 1/2)}\sigma(\cL_\tau^\eps)_{|L_{\text{per}}^2([0,2\pi])}.
\end{equation}

\subsection{The Spectral Stability Criterion}
We can now rigorously define the condition for the periodic traveling wave to be dynamically stable.
\vspace{10pt}
\begin{definition}[Spectral Stability]
The family of periodic traveling wave solutions $w^\eps$ of \eqref{S1:eq1} is said to be spectrally stable if the $L^2(\mathbb{R})$-spectrum of the linearized operator $\cL^\eps$ around the wave satisfies
\begin{equation}
\label{S4:eq24}\sigma(\cL^\eps)_{|L^2(\mathbb{R})} \subset \{ \lambda \in \mathbb{C} \mid \operatorname{Re}(\lambda) \leq 0 \}.
\end{equation}
Otherwise, we say the wave is spectrally unstable.
\end{definition}
In view of the Floquet--Bloch decomposition and \eqref{S4:eq23} the criterion \eqref{S4:eq24} is equivalent to the condition that for all $\tau \in [-1/2, 1/2)$, the point spectrum of the Bloch operators $\sigma(\cL_\tau^\eps)$ satisfy,
\begin{equation*}
    \bigcup \limits_{\tau \in [-1/2, 1/2)}\sigma(\cL_\tau^\eps)_{|L_{\text{per}}^2([0,2\pi])}\subset \{ \lambda \in \mathbb{C} \mid \operatorname{Re}(\lambda) \leq 0 \}.
\end{equation*}
We immediately obtain the following criterion for instability.
\vspace{10pt}
\begin{definition}[Spectral Instability Criterion]
For each $\eps \in (0, \eps_0)$, the periodic traveling wave $w^\eps$ is spectrally unstable if there exists a Bloch parameter $\tau_{0} \in [-1/2, 1/2)$ for which 
\begin{equation*}
\sigma_{\mathrm{pt}}(\cL_{\tau_{0}}^\eps)_{|L_{\mathrm{per}}^2([0,2\pi])} \cap \{ \lambda \in \mathbb{C} \mid \operatorname{Re}(\lambda) > 0 \} \neq \varnothing.
\end{equation*}
\end{definition}

\section{Spectral Instability}\label{section5}
In this section we shall prove the main result about the spectral instability of the small-amplitude periodic traveling waves $w^\eps(z)$ constructed earlier. The core mechanism of this instability is that these waves bifurcate from the trivial state $w \equiv 0$, which is inherently unstable owing to the presence of the monostable source term with $r > 0$.

\subsection{Relatively Bounded Perturbations}
We first decompose our Bloch operators $\cL_{\tau}^\eps$ into an unperturbed part (evaluated at the bifurcation point $\eps = 0$) and a small, relatively bounded perturbation. From \eqref{S1:eq4}, \eqref{S1:eq6}, and \eqref{S1:eq5} we obtain the critical parameters at the bifurcation point ($\eps = 0$) to be
\begin{equation*}
    w^0 = 0, \quad \ka(0) = \ka_0 = \sqrt{r},\quad\text{and}\quad c(0) = c_0 = m(\ka_0) = m(\sqrt{r}).
\end{equation*}
We decompose the Bloch operator $\cL_\tau^\eps : H_{\text{per}}^\mu([0,2\pi]) \to L_{\text{per}}^2([0,2\pi])$ as
\begin{equation*}
    \cL_\tau^\eps = \cL_\tau^0 + \cP_\tau^\eps,
\end{equation*}
where the unperturbed operator $\cL_\tau^0$ is given by
\begin{equation*}
    \cL_\tau^0 W \coloneqq \ka_0^2(\partial_z + i\tau)^2 W - \ka_0(\mathcal{M}_{\ka_0, \tau} - c_0)(\partial_z + i\tau) W + r W,
\end{equation*}
and the perturbation operator $\cP_\tau^\eps$, which collects all $\mathcal{O}(\eps)$ and $\mathcal{O}(\eps^2)$ corrections, is given by
\begin{align}\label{S5:eq3}
    \cP_\tau^\eps W &\coloneqq (\ka^2(\eps) - \ka_0^2)(\partial_z + i\tau)^2 W \nonumber \\
    &\quad - \left[ \ka(\eps)(\mathcal{M}_{\ka(\eps), \tau}-c(\eps)) -\ka_0(\mathcal{M}_{\ka_0, \tau} - c_0) \right](\partial_z + i\tau) W \nonumber\\
    &\quad - \alpha \ka(\eps)(\partial_z + i\tau)(w^\eps W) - 2r w^\eps W.
\end{align}
Therefore, we can rewrite the spectral problem \eqref{S4:eq21} as a perturbed spectral problem of the form
\begin{equation*}
    \lambda W = (\cL_\tau^0 + \cP_\tau^\eps) W, \qquad W \in H_{\text{per}}^\mu([0,2\pi])=\mathcal{D}(\cL_\tau^\eps).
\end{equation*}
Now we characterize the spectrum of $\cL^{0}_{\tau}$.
\vspace{10pt}
\begin{lemma}[Spectrum of $\mathcal{L}^{0}_{\tau}$]\label{lem:L0_spectrum}\label[lemma]{S5:lm2}
For any $\tau \in [-1/2, 1/2)$, the eigenvalues of the operator $\mathcal{L}^{0}_{\tau}$ are given explicitly by the sequence
\begin{equation}\label{S5:eq5}
\lambda_n^0(\tau) \coloneqq -\ka_0^2(n+\tau)^2 + r - i \ka_0(n+\tau)\Big( m(\ka_0(n+\tau)) - c_0 \Big), \quad n \in \Z.
\end{equation}
The corresponding eigenfunctions are $W_n(z) = e^{inz}$. Furthermore, the unique eigenvalue with the maximal real part occurs at $\tau = 0$ and $n = 0$, yielding $\lambda_0^0(0) = r > 0$.
\end{lemma}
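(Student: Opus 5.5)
The operator $\cL^0_\tau$ has constant coefficients, so the plan is to diagonalize it in the Fourier basis $\{e^{inz}\}_{n\in\Z}$ of $L^2_{\mathrm{per}}([0,2\pi])$.

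\emph{Step 1: action on the basis.} First I would compute $\cL^0_\tau e^{inz}$ term by term. We have $(\partial_z+i\tau)e^{inz}=i(n+\tau)e^{inz}$, hence $(\partial_z+i\tau)^2e^{inz}=-(n+\tau)^2e^{inz}$. By the definition of the Bloch multiplier, $\cM_{\ka_0,\tau}e^{inz}=m(\ka_0(n+\tau))e^{inz}$. Substituting these gives
\[
\cL^0_\tau e^{inz}=\lambda^0_n(\tau)e^{inz},
\]
with $\lambda^0_n(\tau)$ exactly as in \eqref{S5:eq5}.

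\emph{Step 2: no other spectrum.} Next I would show there is nothing else in the spectrum. By \cref{S4:lem7}, $\sigma(\cL^0_\tau)$ is purely point spectrum. If $\lambda\neq\lambda^0_n(\tau)$ for all $n$, then the eigenvalue equation $\cL^0_\tau W=\lambda W$ reads, coefficientwise, $(\lambda^0_n(\tau)-\lambda)\widehat W(n)=0$. This forces $\widehat W\equiv0$, so $\lambda$ is not an eigenvalue. As a direct check one can also argue via the resolvent: $|\lambda^0_n(\tau)|\gtrsim(1+|n|)^\mu$ by the same lower bound used in \cref{S3:lm1}, built from Assumption \ref{assumption2}. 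Hence $\lambda^0_n(\tau)\to\infty$ and the sequence has no finite accumulation points. For $\lambda$ off the sequence, the multiplier $(\lambda^0_n(\tau)-\lambda)^{-1}$ is bounded and defines a bounded inverse from $L^2_{\mathrm{per}}$ into $H^\mu_{\mathrm{per}}$. Either way, $\sigma(\cL^0_\tau)=\{\lambda^0_n(\tau):n\in\Z\}$, and the eigenfunctions are $e^{inz}$.

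\emph{Step 3: maximal real part.} Finally, $\operatorname{Re}\lambda^0_n(\tau)=r-\ka_0^2(n+\tau)^2$ is maximized exactly when $(n+\tau)^2$ is minimal. Over $n\in\Z$ and $\tau\in[-1/2,1/2)$ we have $|n+\tau|\ge0$, with equality if and only if $n=0$ and $\tau=0$. At that point $\lambda^0_0(0)=r-0\cdot(\dots)=r>0$, so this real value is attained uniquely.

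\emph{Main obstacle.} The only point needing care is interpreting $m$ at $\ka_0(n+\tau)=0$. Assumption \ref{assumption1} gives continuity of $m$ there, and in any case the factor $(n+\tau)$ multiplying $m$ vanishes, so the imaginary part is $0$. Everything else is routine Fourier-multiplier bookkeeping.
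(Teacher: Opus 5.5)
Your proposal is correct and follows the paper's proof. Both diagonalize $\mathcal{L}^0_\tau$ on the Fourier modes $e^{inz}$ and then maximize $\operatorname{Re}\lambda_n^0(\tau)=r\bigl(1-(n+\tau)^2\bigr)$ over $n\in\mathbb{Z}$ and $\tau\in[-1/2,1/2)$. Your Step 2 (the coefficientwise/resolvent argument showing the spectrum is exactly $\{\lambda_n^0(\tau)\}_{n\in\mathbb{Z}}$) spells out a completeness point the paper leaves implicit after invoking the point-spectrum lemma; this is a welcome addition, not a different route.
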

\begin{proof}
From \cref{S4:lem7}, for any $\tau \in [-1/2, 1/2)$, the operator $\mathcal{L}^{0}_{\tau}$ has a purely discrete point spectrum. Applying $\mathcal{L}^{0}_{\tau}$ to $e^{inz}$ we obtain
\begin{equation*}
\mathcal{L}^{0}_{\tau}e^{inz}= \Big[ -\ka_0^2(n+\tau)^2 + r - i \ka_0(n+\tau)\big(m(\ka_0(n+\tau)) - c_0\big) \Big] e^{inz}.
\end{equation*}
The real part of each eigenvalue is $\operatorname{Re}(\lambda_n^0(\tau)) = -\ka_0^2(n+\tau)^2 + r= r(1 - (n+\tau)^2)$. 
Since $n \in \Z$ and $\tau \in [-1/2, 1/2)$, the quantity $(n+\tau)^2$ achieves its absolute minimum of $0$ strictly at $\tau=0$ and $n=0$. Thus, $\max\limits_{n,\tau} \operatorname{Re}(\lambda_n^0(\tau)) = \operatorname{Re}(\lambda_0^0(0)) = r(1-0) = r>0$.
\end{proof}
Next, we want to show that an isolated eigenvalue of $\mathcal{L}^{\eps}_{0}$ for $\eps$ sufficiently small is in neighborhood of unstable eigenvalue $r$ of $\mathcal{L}^{0}_{0}$. In order to do that, we must first show that the perturbation operator $\mathcal{P}^\eps_\tau$ is bounded relative to the unperturbed operator $\mathcal{L}^{0}_{\tau}$ with a vanishing relative bound as $\eps \to 0$ (refer \cref{S5:def3}). We start with a coercivity bound on $\mathcal{L}_\tau^0$.
\vspace{10pt}
\begin{lemma}[Coercivity Bound on $\cL_\tau^0$]\label[lemma]{S4:lemz}
There exists a constant $C>0$, independent of $W$, such that
\begin{equation}\label{S5:D}
 \|\mathcal{L}_{\tau}^{0} W\|_{L^2_\mathrm{per}([0,2\pi])} + \|W\|_{L^2_\mathrm{per}([0,2\pi])} \ge C\|W\|_{H^\mu_\mathrm{per}([0,2\pi])},
\end{equation}
where $\mu = \max\{2,\beta+1\}$.
\end{lemma}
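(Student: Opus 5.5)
Since $\mathcal{L}^0_\tau$ is a Fourier multiplier on $L^2_{\mathrm{per}}([0,2\pi])$, the plan is to prove \eqref{S5:D} coefficient by coefficient via Parseval. From \cref{S5:lm2} we have $\mathcal{L}^0_\tau e^{inz}=\lambda^0_n(\tau)e^{inz}$. Hence for $W=\sum_n \widehat{W}(n)e^{inz}\in H^\mu_{\mathrm{per}}$,
\begin{equation*}
\|\mathcal{L}^0_\tau W\|^2_{L^2_{\mathrm{per}}}+\|W\|^2_{L^2_{\mathrm{per}}}=\sum_{n\in\Z}\bigl(|\lambda^0_n(\tau)|^2+1\bigr)|\widehat{W}(n)|^2 .
\end{equation*}
Using $(a+b)^2\le 2(a^2+b^2)$, the left side of \eqref{S5:D} squared dominates this sum. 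It therefore suffices to find $C>0$ with
\begin{equation*}
|\lambda^0_n(\tau)|^2+1\ge C^2(1+n^2)^\mu
\end{equation*}
for all $n\in\Z$, uniformly in $\tau\in[-1/2,1/2)$. Summing against $|\widehat{W}(n)|^2$ and taking square roots then gives \eqref{S5:D}.

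For large $|n|$, write $\xi=n+\tau$, so $|\xi|\ge |n|-1/2$. The real part of $\lambda^0_n(\tau)$ is $r-\ka_0^2\xi^2$, and $|r-\ka_0^2\xi^2|\ge \tfrac{r}{2}\xi^2$ once $\xi^2\ge 2$. The imaginary part is $\ka_0\xi\bigl(m(\ka_0\xi)-c_0\bigr)$. By Assumption \ref{assumption2}, $|m(\ka_0\xi)|\ge C_1\ka_0^\beta|\xi|^\beta$ for $\ka_0|\xi|>M$, which gives $|\mathrm{Im}\,\lambda^0_n|\ge \tfrac{C_1}{2}\ka_0^{1+\beta}|\xi|^{1+\beta}$ when $\beta>0$ and $|\xi|$ is large. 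This is only needed when $\beta+1>2$; otherwise the real part already supplies order $\mu=2$. Taking the maximum of the two lower bounds, as in the proof of \cref{S3:lm1}, gives $|\lambda^0_n(\tau)|^2\ge c|\xi|^{2\mu}\ge c'(1+n^2)^\mu$ for $|n|\ge N$, with $N$ independent of $\tau$.

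For the finitely many $|n|<N$, the factor $(1+n^2)^\mu$ is bounded by $(1+N^2)^\mu$, and $|\lambda^0_n(\tau)|^2+1\ge 1$. So the inequality holds there with $C^2=(1+N^2)^{-\mu}$. This is precisely why the $\|W\|_{L^2}$ term is included: $\lambda^0_n(\tau)$ can vanish, for instance $\lambda^0_{\pm1}(0)=0$ and possibly $\lambda^0_n(\tau)=0$ at $n+\tau=\pm1$. Taking the minimum of the two constants finishes the argument.

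The main obstacle is the uniformity in $\tau$ together with the case analysis in $\beta$. The threshold $N$ must be chosen so that $|\xi|\ge N-1/2$ exceeds both $\sqrt2$ and $M/\ka_0$. For $\beta\le 1$ we have $\mu=2$ and only the real part is needed. For $-1\le\beta\le 0$, the term $c_0$ may dominate $m$, but this is harmless since the real part controls everything. I would present the $\beta>1$ and $\beta\le1$ cases separately to avoid the incorrect use of the lower bound on $|m|$ when $\beta\le 0$.
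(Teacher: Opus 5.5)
Your proposal is correct and is essentially the paper's argument: Parseval reduces \eqref{S5:D} to the pointwise bound $|\lambda_n^0(\tau)|^2+1\ge C^2(1+n^2)^{\mu}$. That bound comes from the quadratic real part (and, for $\beta>1$, from the imaginary part via \ref{assumption2}) at large $|n|$, and from the $+1$ on the finitely many remaining modes. Your case split on $\beta$ is in fact more careful than the paper, which asserts that the $m$-term dominates $c_0$ in the imaginary part for every $\beta$; this is not true for $\beta<0$, though that bound is only needed when $\beta>1$. One small slip: the step ``the left side squared dominates the sum'' uses $(a+b)^2\ge a^2+b^2$ for $a,b\ge 0$, not $(a+b)^2\le 2(a^2+b^2)$.
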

\begin{proof}
We proceed by analyzing the operator in Fourier space. Expanding $W \in H^\mu_{\mathrm{per}}([0,2\pi])$ into a Fourier series, and applying $\mathcal{L}_\tau^0$ on $W$ we get
\begin{equation*}
    \mathcal{L}_\tau^0 W(z) = \sum_{n \in \mathbb{Z}} \lambda_n^0(\tau) \widehat{W}(n) e^{inz},
\end{equation*}
with $\lambda_n^0(\tau)$ as in \eqref{S5:eq5}, where
\begin{align*}
    \operatorname{Re}(\lambda_n^0(\tau)) &= -\kappa_0^2(n+\tau)^2 + r, \\
    \operatorname{Im}(\lambda_n^0(\tau)) &= -\kappa_0(n+\tau)\big(m(\kappa_0(n+\tau)) - c_0\big).
\end{align*}
 Now, the dominant term in the real part is clearly quadratic in $n$. Hence, there exist constants $N_1 > 0$ and $C_1 > 0$ such that for all $|n| \ge N_1$, we have
\begin{equation}\label{S4:eqZ1}
    |\operatorname{Re}(\lambda_n^0(\tau))| \ge C_1 n^2.
\end{equation}
Also, utilizing the Assumption \ref{assumption2} and noting the fact that the term involving $m$ dominates the constant $c_0$ in the imaginary part, we obtain that there exist constants $N_2 > 0$ and $C_2 > 0$ such that for all $|n| \ge N_2$,
\begin{equation}\label{S4:eqZ2}
    |\operatorname{Im}(\lambda_n^0(\tau))| \ge C_2 |n|^{\beta+1}.
\end{equation}
Now, the estimates \eqref{S4:eqZ1} and \eqref{S4:eqZ2} jointly imply that for $|n| \ge \max\{N_1, N_2\}$,
\begin{equation*}
    |\lambda_n^0(\tau)| \ge \max\left\{|\operatorname{Re}(\lambda_n^0(\tau))|, |\operatorname{Im}(\lambda_n^0(\tau))|\right\}\geq\max\left\{C_1 n^2, C_2 |n|^{\beta+1}\right\} \ge C_3 (1+|n|^2)^{\mu/2},
\end{equation*}
where $\mu = \max\{2, \beta+1\}$ and $C_3 > 0$ is a sufficiently small constant. For the remaining low-frequency modes, where $|n| < \max\{N_1, N_2\}$, the quantity $|\lambda_n^0(\tau)|^2 + 1$ is strictly positive. Since this involves only a finite number of modes, there exists a uniform constant $C_4 > 0$ such that for all $n \in \mathbb{Z}$, the following bound holds
\begin{equation}\label{S4:eqZ3}
    |\lambda_n^0(\tau)|^2 + 1 \ge C_4^2 (1+|n|^2)^{\mu}.
\end{equation}
Finally, using the inequality $(a+b)^2 \ge a^2+b^2$ for $a,b \ge 0$, we compute
\begin{align*}
    \big( \|\mathcal{L}_{\tau}^{0} W\|_{L^2_\mathrm{per}([0,2\pi])} + \|W\|_{L^2_\mathrm{per}([0,2\pi])}\big)^2 
    &\ge \|\mathcal{L}_{\tau}^{0} W\|_{L^2_\mathrm{per}([0,2\pi])}^2 + \|W\|_{L^2_{\mathrm{per}}([0,2\pi])}^2 \\
    &= \sum_{n \in \mathbb{Z}} |\lambda_n^0(\tau)|^2 |\widehat{W}(n)|^2 +  \sum_{n \in \mathbb{Z}} |\widehat{W}(n)|^2 \\
    &=  \sum_{n \in \mathbb{Z}} \big(|\lambda_n^0(\tau)|^2 + 1\big) |\widehat{W}(n)|^2,
\end{align*}
which by \eqref{S4:eqZ3} yields
\begin{align*}
    \big( \|\mathcal{L}_{\tau}^{0} W\|_{L^2_\mathrm{per}([0,2\pi])} + \|W\|_{L^2_\mathrm{per}([0,2\pi])}\big)^2 
    &\ge  C_4^2 \sum_{n \in \mathbb{Z}} (1+|n|^2)^{\mu}|\widehat{W}(n)|^2 \\
    &= C_4^2 \|W\|_{H^\mu_{\text{per}}([0,2\pi])}^2.
\end{align*}
Taking the square root of both sides and setting $C = C_4$ completes the proof.
\end{proof}
\begin{lemma}[Relative Boundedness]\label[Lemma]{S5:lm4}
For any fixed $\tau \in [-1/2, 1/2)$, the operator $\mathcal{P}^\eps_\tau$ is $\mathcal{L}^{0}_{\tau}$-bounded. Specifically, there exist nonnegative constants $a(\eps)$ and $b(\eps)$ such that for all $W \in H^\mu_{\mathrm{per}}([0,2\pi])$,
\begin{equation}\label{S5:eq7}
\|\mathcal{P}^\eps_\tau W\|_{L^2_\mathrm{per}([0,2\pi])} \le a(\eps)\|W\|_{L^2_\mathrm{per}([0,2\pi])} + b(\eps)\|\mathcal{L}^{0}_{\tau} W\|_{L^2_\mathrm{per}([0,2\pi])},
\end{equation}
and $\lim\limits_{\eps \to 0} a(\eps) = 0$ and $\lim\limits_{\eps \to 0} b(\eps) = 0$.
\end{lemma}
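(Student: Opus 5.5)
The plan is to bound each of the four pieces of $\cP^\eps_\tau$ in \eqref{S5:eq3} by a small multiple of $\|W\|_{H^\mu_{\mathrm{per}}}$, and then convert this into the form \eqref{S5:eq7} using the coercivity bound \cref{S4:lemz}. Concretely, we aim to show that there is a constant $K(\eps)\to0$ as $\eps\to0$ with
\begin{equation*}
\|\cP^\eps_\tau W\|_{L^2_{\mathrm{per}}}\le K(\eps)\|W\|_{H^\mu_{\mathrm{per}}}.
\end{equation*}
Since \cref{S4:lemz} gives $\|W\|_{H^\mu_{\mathrm{per}}}\le C^{-1}\bigl(\|\cL^0_\tau W\|_{L^2_{\mathrm{per}}}+\|W\|_{L^2_{\mathrm{per}}}\bigr)$, we may then take $a(\eps)=b(\eps)=K(\eps)/C$, and both tend to zero.

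The four terms are handled as follows.

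\textbf{First term.} $(\ka^2(\eps)-\ka_0^2)(\partial_z+i\tau)^2W$ has Fourier multiplier $-(\ka^2(\eps)-\ka_0^2)(n+\tau)^2$. Its $L^2$ norm is therefore at most $|\ka^2(\eps)-\ka_0^2|\,\|W\|_{H^2_{\mathrm{per}}}\le|\ka^2(\eps)-\ka_0^2|\,\|W\|_{H^\mu_{\mathrm{per}}}$, and this coefficient is $\mathcal{O}(\eps^2)$ by \eqref{S1:eq6}.

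\textbf{Last two terms.} Since $w^\eps$ is smooth and periodic with $\|w^\eps\|_{W^{1,\infty}}=\mathcal{O}(\eps)$ by \eqref{S1:eq4}, the product rule gives
\begin{equation*}
\|\alpha\ka(\eps)(\partial_z+i\tau)(w^\eps W)+2rw^\eps W\|_{L^2_{\mathrm{per}}}\le C\|w^\eps\|_{W^{1,\infty}}\|W\|_{H^1_{\mathrm{per}}}=\mathcal{O}(\eps)\|W\|_{H^\mu_{\mathrm{per}}}.
\end{equation*}
The estimate $\|w^\eps\|_{W^{1,\infty}}=\mathcal{O}(\eps)$ should follow from the analyticity of $\eps\mapsto w^\eps$ into $X\hookrightarrow W^{1,\infty}$ together with $w^0=0$.

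\textbf{Dispersive term (main obstacle).} This term is diagonal in Fourier space, with multiplier
\begin{equation*}
d_n(\eps)=\bigl[\ka(\eps)\bigl(m(\ka(\eps)(n+\tau))-c(\eps)\bigr)-\ka_0\bigl(m(\ka_0(n+\tau))-c_0\bigr)\bigr](n+\tau).
\end{equation*}
It suffices to show $\sup_n |d_n(\eps)|(1+n^2)^{-\mu/2}\to0$. The pieces $\ka(\eps)c(\eps)-\ka_0c_0$ and $(\ka(\eps)-\ka_0)m(\ka_0(n+\tau))$ are immediate: the first is $\mathcal{O}(\eps^2)$ times $|n+\tau|$, and the second is $\mathcal{O}(\eps^2)$ times $C|n|^{\beta+1}$ by \ref{assumption2}.

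The hard piece is $\ka(\eps)\bigl[m(\ka(\eps)(n+\tau))-m(\ka_0(n+\tau))\bigr](n+\tau)$. A uniform bound needs some control on $m'$ at large arguments, but \ref{assumption2} only bounds $|m|$. I would split into low and high frequencies.
\begin{itemize}
\item For $|n|\le N$, continuity of $m$ (\ref{assumption1}) makes the difference uniformly small as $\eps\to0$.
\item For $|n|>N$, I would crudely bound the difference by $2C_2|\ka_0(n+\tau)|^\beta$ (valid since $\ka(\eps)\in[\ka_0/2,2\ka_0]$). The contribution is then at most $C|n|^{\beta+1}(1+n^2)^{-\mu/2}$, which is small for large $N$ only if $\beta+1<2$.
\end{itemize}
When $\beta+1\ge2$ (e.g.\ Kawahara), this crude bound is insufficient. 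In that case I would use analyticity of $m$ on $\R\setminus\{0\}$ together with the homogeneous growth structure: write $m(\ka\xi)-m(\ka_0\xi)=\int_{\ka_0}^{\ka}\xi\,m'(s\xi)\,ds$ and invoke a derivative bound $|m'(\zeta)|\lesssim|\zeta|^{\beta-1}$ for large $|\zeta|$. This holds for all the model symbols, and for general symbols I would add it as an implicit consequence of the algebraic behaviour in \ref{assumption2}. It yields $|d_n|\lesssim|\ka(\eps)-\ka_0|\,|n|^{\beta+1}=\mathcal{O}(\eps^2)(1+n^2)^{\mu/2}$.

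Finally, collecting the four estimates gives $K(\eps)=\mathcal{O}(\eps)+o(1)\to0$, and the conversion above completes the proof.
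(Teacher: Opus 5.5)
Your proposal follows the paper's proof. You bound each piece of $\cP^\eps_\tau$ by a small multiple of $\|W\|_{H^\mu_{\mathrm{per}}}$, then use the coercivity estimate of \cref{S4:lemz} to set $a(\eps)=b(\eps)=K(\eps)/C$. The first term and the $w^\eps$-terms are handled the same way. Your justification of $\|w^\eps\|_{W^{1,\infty}}=\mathcal{O}(\eps)$, via analyticity of $\eps\mapsto w^\eps$ into $X\hookrightarrow C^1$ with $w^0=0$, is cleaner than the paper's appeal to the expansion \eqref{S1:eq4}.

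The one place you differ is the dispersive term, and there you are more careful than the paper.

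\begin{itemize}
\item The paper simply asserts a bound $C_2\eps^2\|W\|_{H^{\beta+1}_{\mathrm{per}}}$ on the grounds that $m$ is differentiable. As you note, differentiability gives no uniform control at high frequencies. That estimate tacitly needs a bound like $|m'(\zeta)|\lesssim|\zeta|^{\beta-1}$ for large $|\zeta|$, which is exactly what you invoke.
\item For $\beta<1$, your low/high-frequency splitting genuinely works under (A1)--(A2) alone. It gives $a(\eps),b(\eps)\to0$ without a rate, which is all the lemma requires.
\end{itemize}

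One correction: the derivative bound is \emph{not} an implicit consequence of (A2), which only constrains $|m|$.

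\begin{itemize}
\item For example, $m(\xi)=\xi^2(2+\sin\xi^4)$ satisfies (A1)--(A2) with $\beta=2$.
\item Yet $|m'(\xi)|$ is of size $|\xi|^5$ along a sequence $\xi\to\infty$.
\item For fixed $\ka\neq\ka_0$ close to $\ka_0$, one has $|m(\ka\xi)-m(\ka_0\xi)|\ge\ka_0^2\xi^2$ for arbitrarily large $\xi$, rather than $\mathcal{O}(|\ka-\ka_0|\,\xi^2)$.
\end{itemize}

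So for $\beta\ge1$ you should state the derivative bound as an explicit extra hypothesis on the symbol; it holds for every model in \S\ref{section6}. The paper's own proof of this lemma silently relies on a hypothesis of the same kind.
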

\begin{proof}
Using \eqref{S1:eq4}, \eqref{S1:eq6}, and \eqref{S1:eq5}, we can verify that the norm of the profile and its derivative satisfy
\begin{equation}\label{S5:A}
\|w^\eps\|_{L^\infty}=\mathcal{O}(\eps)\quad \text{ and }\quad \|(w^\eps)'\|_{L^\infty} = \mathcal{O}(\eps),   
\end{equation} 
and the parameters satisfy \begin{align}
    |\ka(\eps) - \ka_0| &= \mathcal{O}(\eps^2),\label{S5:B}\\ |c(\eps) - c_0| &= \mathcal{O}(\eps^2).\label{S5:C}
    \end{align} 
From \eqref{S5:B} we can obtain that
\begin{equation}\label{S5:E}
    \|(\ka^2(\eps) - \ka_0^2)(\partial_z + i\tau)^2 W\|_{L^2_\text{per}([0,2\pi])}\leq|\ka(\eps)^2 - \ka_0^2| \|W\|_{H^2_\text{per}([0,2\pi])} \le C_1 \eps^2 \|W\|_{H^\mu_\text{per}([0,2\pi])}.
\end{equation}
Using the smallness of the parameter shifts as depicted in \eqref{S5:B} and \eqref{S5:C}, and the fact that the symbol $m(\xi)$ is differentiable we can see that 
\begin{align}\label{S5:F}
\Big\| \Big[ \ka(\eps)(\cM_{\ka(\eps),\tau} - c(\eps)) - \ka_0(\cM_{\ka_0,\tau} - c_0) \Big](\partial_z + i\tau) W \Big\|_{L^2_\text{per}([0,2\pi])} &\le C_2 \eps^2 \|W\|_{H^{\beta+1}_{\text{per}}([0,2\pi])}\notag\\ &\le C_2 \eps^2 \|W\|_{H^\mu_\text{per}([0,2\pi])}.
\end{align}
Using \eqref{S5:A} the remaining terms of \eqref{S5:eq3} are bounded as
\begin{equation}\label{S5:G}
\|\alpha \ka(\eps) (\partial_z + i\tau)(w^\eps W) + 2r w^\eps W\|_{L^2_\text{per}([0,2\pi])} \le C_3 |\eps| \|W\|_{H^1_\text{per}([0,2\pi])} \le C_3 |\eps| \|W\|_{H^\mu_\text{per}([0,2\pi])}.
\end{equation}
Summing the bounds in \eqref{S5:E}, \eqref{S5:F}, and \eqref{S5:G} yields $\|\mathcal{P}^\eps_\tau W\|_{L^2_\text{per}([0,2\pi])} \le C_5 |\eps| \|W\|_{H^\mu_\text{per}([0,2\pi])}$. Using \eqref{S5:D} from \cref{S4:lemz}, we substitute $\|W\|_{H^\mu_\text{per}([0,2\pi])} \le \frac{1}{C}\left(\|\mathcal{L}_{\tau}^{0} W\|_{L^2_\text{per}([0,2\pi])} + \|W\|_{L^2_\text{per}([0,2\pi])}\right)$ to obtain
\begin{equation*}
\|\mathcal{P}^\eps_\tau W\|_{L^2_\text{per}([0,2\pi])} \le \frac{C_5 |\eps|}{C} \|W\|_{L^2_\text{per}([0,2\pi])} + \frac{C_5 |\eps|}{C} \|\mathcal{L}_{\tau}^{0} W\|_{L^2_\text{per}([0,2\pi])}.
\end{equation*}
Defining $a(\eps) = b(\eps) = C_5 |\eps| / C$, we see that both bounds vanish as $\eps \to 0$. 
\end{proof}

\subsection{Proof of Instability}
The results and definitions related to the perturbation theory used here are provided in \cref{D}. We first show the generalized convergence of our operators.
\vspace{10pt}
\begin{lemma}[Generalized Convergence of $\mathcal{L}^\eps_\tau$]\label[lemma]{S5:lm10}
For any $\tau \in [-1/2, 1/2)$, the family of Bloch operators $\mathcal{L}^\eps_\tau$ converges to $\mathcal{L}^{0}_{\tau}$ in the generalized sense as $\eps \to 0$.
\end{lemma}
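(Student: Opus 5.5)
The plan is to use Kato's criterion for generalized convergence of closed operators, which is presumably recorded in \cref{D}. If $T$ is closed and $A$ is $T$-bounded, i.e.
\begin{equation*}
\|AW\| \le a\|W\| + b\|TW\|, \qquad W\in\mathcal{D}(T)\subset\mathcal{D}(A),
\end{equation*}
then $T+A$ is closed, and the gap satisfies $\hat\delta(T+A,T)\le \sqrt{a^2+b^2}/(1-b)$ whenever $b<1$ \citep[Chapter IV, Theorem 2.14]{Kato1980}. Generalized convergence is, by definition, convergence in the gap metric. So it suffices to apply this with $T=\cL^0_\tau$, $A=\cP^\eps_\tau$ and $T+A=\cL^\eps_\tau$, and then show that the right-hand side tends to zero.

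\textbf{Step 1.} Check the hypotheses on the unperturbed operator and the domains. $\cL^0_\tau$ is closed on $L^2_{\mathrm{per}}([0,2\pi])$ with domain $H^\mu_{\mathrm{per}}([0,2\pi])$. This follows either from \cref{S4:lm10} specialized to $\eps=0$, or directly from the coercivity bound \eqref{S5:D} of \cref{S4:lemz}, since that bound makes the graph norm equivalent to the $H^\mu_{\mathrm{per}}$ norm. The perturbation $\cP^\eps_\tau$ is defined on all of $H^\mu_{\mathrm{per}}([0,2\pi])$, because each term in \eqref{S5:eq3} maps $H^\mu_{\mathrm{per}}$ into $L^2_{\mathrm{per}}$. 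Hence $\mathcal{D}(\cP^\eps_\tau)\supset\mathcal{D}(\cL^0_\tau)$, and $\cL^\eps_\tau=\cL^0_\tau+\cP^\eps_\tau$ on the common domain $H^\mu_{\mathrm{per}}$, as in \eqref{S4:eq22}.

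\textbf{Step 2.} Invoke \cref{S5:lm4}. It gives
\begin{equation*}
\|\cP^\eps_\tau W\|\le a(\eps)\|W\|+b(\eps)\|\cL^0_\tau W\|, \qquad a(\eps)=b(\eps)=C_5|\eps|/C.
\end{equation*}
For $\eps$ small we have $b(\eps)<1$, and the gap estimate then yields
\begin{equation*}
\hat\delta(\cL^\eps_\tau,\cL^0_\tau)\le \frac{\sqrt{2}\,C_5|\eps|/C}{1-C_5|\eps|/C}\longrightarrow 0 \quad \text{as } \eps\to 0.
\end{equation*}
This is exactly generalized convergence. If \cref{D} instead defines generalized convergence through \cref{S5:def3} via the sufficient condition "$A_\eps$ is $T$-bounded with $a(\eps),b(\eps)\to0$" (Kato IV, Theorem 2.24), then Step 2 is literally that theorem, and the proof ends there.

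The main obstacle I expect is verifying that the constants in \cref{S5:lm4} are genuinely uniform. The bound \eqref{S5:F} needs a uniform mean-value estimate, $|\ka(\eps)m(\ka(\eps)\xi)-\ka_0 m(\ka_0\xi)|\lesssim\eps^2|\xi|^{\beta}$ over all frequencies $\xi=n+\tau$. Assumption \ref{assumption2} bounds only $|m|$ and not $m'$. I would therefore either accept \cref{S5:lm4} as stated, which is allowed, or note that at worst $\|\cP^\eps_\tau W\|\to0$ relative to the graph norm. That weaker statement suffices, since Kato's theorem only needs $a(\eps),b(\eps)\to0$.
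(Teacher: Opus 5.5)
Your proof is the same as the paper's. Both apply Kato's gap estimate (Theorem~\ref{S5:thm7}, inequality \eqref{S5:eq8}) with $\cT=\cL^0_\tau$ and $\cA=\cP^\eps_\tau$, using the relative bounds $a(\eps),b(\eps)\to 0$ from Lemma~\ref{S5:lm4}, so the argument is complete once that lemma is granted.

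Your suspicion about \eqref{S5:F} is justified, but your fallback (vanishing relative bounds without control of $m'$) only holds automatically for $\beta<1$, where the dispersive term has order below $\mu=2$. For $\beta\ge 1$ that term is of top order, so the relative bounds need not vanish. The robust route there is norm-resolvent convergence, which is equivalent to generalized convergence for operators with nonempty resolvent set, and it still holds for the constant-coefficient part: the symbol difference is $\mathcal{O}(|n|^{\mu})$, while each resolvent symbol is $\mathcal{O}(|n|^{-\mu})$ uniformly in small $\eps$.
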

\begin{proof}
By \eqref{S5:eq7} and \eqref{S5:eq8} from \cref{S5:thm7}, and the fact that $\mathcal{L}^\eps_\tau - \mathcal{L}^{0}_{\tau} = \mathcal{P}^\eps_\tau$ is relatively bounded with respect to $\mathcal{L}^{0}_{\tau}$ with relative bounds $a(\eps), b(\eps) \to 0$, we infer that for any $\tau \in [-1/2, 1/2)$, $\widehat{\delta}(\mathcal{L}^\eps_\tau,\mathcal{L}^{0}_{\tau})\to 0$ as $\eps\to 0$. 
\end{proof}
Next, we establish the isolation of the critical eigenvalue of the base operator.
\vspace{10pt}
\begin{lemma}[Isolation of the Critical Eigenvalue]\label[lemma]{S5:lm11}
Consider the unperturbed operator $\mathcal{L}^{0}_{0}$ evaluated at $\tau = 0$. The value $\lambda_0^0(0) = r$ is a simple, isolated eigenvalue of $\mathcal{L}^{0}_{0}$.
\end{lemma}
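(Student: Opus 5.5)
We argue directly in Fourier variables, since at $\tau=0$ the operator $\cL^0_0$ is a Fourier multiplier on $L^2_{\mathrm{per}}([0,2\pi])$. By \cref{S5:lm2}, $\cL^0_0 e^{inz}=\lambda_n^0(0)e^{inz}$ with
\begin{equation*}
\lambda_n^0(0) = r(1-n^2) - i\ka_0 n\bigl(m(\ka_0 n)-c_0\bigr), \qquad n\in\Z .
\end{equation*}
Because $\{e^{inz}\}_{n\in\Z}$ is an orthonormal basis and $\cL^0_0$ is diagonal in it, the spectrum of $\cL^0_0$ is the closure of $\{\lambda_n^0(0)\}_{n\in\Z}$. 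The kernel of $\cL^0_0-\lambda$ is spanned by those $e^{inz}$ with $\lambda_n^0(0)=\lambda$. So the proof reduces to three facts about this sequence: $\lambda_n^0(0)=r$ only for $n=0$; the value $r$ is not an accumulation point; and the algebraic multiplicity equals the geometric one.

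The first step is to identify the modes with eigenvalue $r$. Since $\operatorname{Re}\lambda_n^0(0)=r(1-n^2)$, we have $\operatorname{Re}\lambda_n^0(0)=r$ exactly when $n=0$, using $r>0$. For $n=0$ the imaginary part vanishes because of the factor $n$, so $\lambda_0^0(0)=r$ with eigenfunction the constant $1$. This gives $\ker(\cL^0_0-r)=\operatorname{span}\{1\}$, so the geometric multiplicity is one.

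The second step is isolation. For $n\neq 0$,
\begin{equation*}
|\lambda_n^0(0)-r|\ge |\operatorname{Re}\lambda_n^0(0)-r| = rn^2\ge r .
\end{equation*}
Hence the disc of radius $r$ about $r$ contains no other spectral point. Alternatively, \cref{S4:lem7} already says the spectrum consists of isolated eigenvalues, but the explicit gap $r$ is more useful later when we choose a contour for the Riesz projection.

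The third step is algebraic simplicity. Suppose $(\cL^0_0-r)^2W=0$. Then $(\lambda_n^0(0)-r)^2\widehat W(n)=0$ for every $n$, which forces $\widehat W(n)=0$ for all $n\neq0$, so $W$ is constant. Thus the generalized kernel is one-dimensional. The same conclusion follows from the Riesz projection
\begin{equation*}
P=-\frac{1}{2\pi i}\oint_{|\lambda-r|=r/2}(\cL^0_0-\lambda)^{-1}\,d\lambda .
\end{equation*}
It acts diagonally, with $\widehat{PW}(n)=\widehat W(n)$ for $n=0$ and $0$ otherwise, so $P$ is the rank-one projection onto constants. To justify this we need the resolvent to be bounded uniformly on the circle. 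That holds because $|\lambda_n^0(0)-\lambda|\ge r/2$ for all $n$ whenever $|\lambda-r|=r/2$.

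The only genuine obstacle is the real-part gap, and it is immediate because the coefficient of $n^2$ comes from the Burgers term and the reaction constant is $r>0$. No assumption on $m$ beyond it being real-valued is needed, since $m$ enters only through the imaginary part.
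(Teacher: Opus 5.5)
Your proposal is correct and takes essentially the same route as the paper: both read off the explicit multipliers $\lambda_n^0(0)$ from \cref{S5:lm2} and use the real-part separation $\operatorname{Re}\lambda_n^0(0)=r(1-n^2)$ to obtain a spectral gap of size $r$ around $\lambda_0^0(0)=r$. Your version is slightly more complete in two respects. Your bound $|\lambda_n^0(0)-r|\ge rn^2$ does not need the paper's computation $\lambda_{\pm1}^0(0)=0$, which relies on $c_0=m(\ka_0)$ and the evenness of $m$. You also actually verify algebraic simplicity, via the generalized kernel and the Riesz projection, whereas the paper only asserts it.
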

\begin{proof}
From Lemma \ref{S5:lm2}, we obtain the eigenvalues of $\mathcal{L}^{0}_{0}$ as $$\lambda_n^0(0) = -\ka_0^2 n^2 + r - i \ka_0 n(m(\ka_0 n) - c_0), \quad n\in\Z.$$
For $n = 0$, we have $\lambda_0^0(0) = r$. So, $\operatorname{Re}(\lambda_0^0(0)) = r>0$. For $n = \pm 1$, we have $\lambda_{\pm 1}^0(0) =-\ka_0^{2}+r=0$. So, $\operatorname{Re}(\lambda_{\pm 1}^0(0)) = 0$. For $|n| \geq 2$, we have $\operatorname{Re}(\lambda_n^0(0)) = r(1 - n^2)< -3r < 0$. The eigenvalue $\lambda_0^0(0) = r$ is strictly separated from the rest of the spectrum by a distance of at least $\sqrt{r^2 + (\operatorname{Im}(\lambda_1^0(0))^2}=\sqrt{r^{2}}=r>0$, proving it is an isolated point of the spectrum, and also has algebraic multiplicity of one. Thus the value $\lambda_0^0(0) = r$ is a simple, isolated eigenvalue of $\mathcal{L}^{0}_{0}$.
\end{proof}
We are now equipped to finalize the proof of spectral instability.
\vspace{10pt}
\begin{lemma}[Perturbation of the Unstable Eigenvalue]\label[lemma]{S5:lm12}
For $\eps > 0$ sufficiently small, the Bloch operator $\mathcal{L}^\eps_0$ possesses a simple eigenvalue $\lambda^\eps_0(0)$, satisfying $\operatorname{Re}(\lambda^\eps_0(0)) > 0$.
\end{lemma}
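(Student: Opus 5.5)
We plan to derive this from the standard Kato-type stability of isolated eigenvalues of finite algebraic multiplicity under generalized convergence, applied at $\tau=0$. By \cref{S5:lm10}, $\mathcal{L}^\eps_0\to\mathcal{L}^0_0$ in the generalized sense as $\eps\to0$. By \cref{S5:lm11}, $r$ is a simple, isolated eigenvalue of $\mathcal{L}^0_0$, with $\operatorname{dist}(r,\sigma(\mathcal{L}^0_0)\setminus\{r\})=r$ (the nearest competing eigenvalues are $\lambda^0_{\pm1}(0)=0$). We fix the circle $\Gamma=\{\lambda\in\C: |\lambda-r|=r/2\}$. It lies in $\rho(\mathcal{L}^0_0)$, encloses only $r$, and lies entirely in $\{\operatorname{Re}\lambda\ge r/2>0\}$.

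First, using the perturbation result recorded in \cref{D} (the analogue of Kato, Chapter IV, Theorem 3.16), we show that for $\eps$ small, $\Gamma\subset\rho(\mathcal{L}^\eps_0)$. We also show that the Riesz projections
\begin{equation*}
P^\eps=-\frac{1}{2\pi i}\oint_\Gamma(\mathcal{L}^\eps_0-\lambda\cI)^{-1}\,d\lambda
\end{equation*}
converge in norm to $P^0$. Alternatively, this step can be done directly. For $\lambda\in\Gamma$, write
\begin{equation*}
\mathcal{L}^\eps_0-\lambda=(\cI+\mathcal{P}^\eps_0(\mathcal{L}^0_0-\lambda)^{-1})(\mathcal{L}^0_0-\lambda).
\end{equation*}
By \cref{S5:lm4}, $\|\mathcal{P}^\eps_0(\mathcal{L}^0_0-\lambda)^{-1}\|\le a(\eps)\|(\mathcal{L}^0_0-\lambda)^{-1}\|+b(\eps)\|\mathcal{L}^0_0(\mathcal{L}^0_0-\lambda)^{-1}\|$. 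Since $\mathcal{L}^0_0$ is a Fourier multiplier, both resolvent norms are explicit suprema over $n$ and are uniformly bounded on the compact set $\Gamma$. Hence the perturbation factor has norm less than $1$ for small $\eps$, and a Neumann series inverts it. The resulting resolvent bound, uniform on $\Gamma$, gives $\|P^\eps-P^0\|\to0$.

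Second, since $\|P^\eps-P^0\|<1$, the ranges have equal dimension, so $\dim\operatorname{range}P^\eps=\dim\operatorname{range}P^0=1$. Therefore the part of $\sigma(\mathcal{L}^\eps_0)$ inside $\Gamma$ consists of exactly one eigenvalue $\lambda^\eps_0(0)$ of algebraic multiplicity one, i.e. a simple eigenvalue. It is an eigenvalue (not merely a spectral point) by \cref{S4:lem7}. Since it lies inside $\Gamma$, $|\lambda^\eps_0(0)-r|<r/2$, which forces $\operatorname{Re}\lambda^\eps_0(0)>r/2>0$. Upper semicontinuity of the spectrum further gives $\lambda^\eps_0(0)\to r$, although only the positivity of the real part is needed.

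The main obstacle is the uniform resolvent bound on $\Gamma$, in particular controlling $\|\mathcal{L}^0_0(\mathcal{L}^0_0-\lambda)^{-1}\|=\sup_n|\lambda^0_n(0)|/|\lambda^0_n(0)-\lambda|$. We plan to handle it via the explicit spectrum in \cref{S5:lm2}. For large $|n|$, the ratio tends to $1$ uniformly in $\lambda\in\Gamma$, because $|\lambda^0_n(0)|\to\infty$. For the finitely many remaining $n$, the denominator is at least $r/2$. This is exactly the relative boundedness with vanishing bounds from \cref{S5:lm4}, so once this supremum is bounded the remainder is routine.
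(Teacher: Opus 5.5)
Your proposal is correct and follows the paper's proof. You use the same circle $\Gamma$ of radius $r/2$ about $r$, the generalized convergence $\mathcal{L}^\eps_0\to\mathcal{L}^0_0$ from Lemma~\ref{S5:lm10}, and the continuity of a finite system of eigenvalues (Theorem~\ref{S5:thm9}) to get exactly one simple eigenvalue inside $\Gamma$, hence with positive real part. Your optional direct route is a sound, self-contained unpacking of that theorem via Lemma~\ref{S5:lm4}: the Neumann series, Riesz-projection convergence, and the bounds $\|(\mathcal{L}^0_0-\lambda)^{-1}\|\le 2/r$ and $\|\mathcal{L}^0_0(\mathcal{L}^0_0-\lambda)^{-1}\|\le 4$ on $\Gamma$.
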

\begin{proof}
By Lemma \ref{S5:lm11}, $\lambda_0^0(0) = r > 0$ is a simple, isolated eigenvalue of $\mathcal{L}^{0}_{0}$ (so forms a finite system of eigenvalues). Let $\Gamma$ be a circle in the complex plane centered at $r>0$ with radius $r/2$. Thus, $\Gamma$ lies entirely in the open right-half of the complex plane and encloses only the eigenvalue $\lambda^{0}_0(0)$.
By \cref{S5:lm10}, $\mathcal{L}^\eps_0 \to \mathcal{L}^{0}_{0}$ in the generalized sense as $\eps\to 0$. Applying \cref{S5:thm9}, for $\eps$ sufficiently small, the curve $\Gamma$ encloses exactly one simple eigenvalue of $\mathcal{L}^\eps_0$, which we denote as $\lambda^\eps_0(0)$.
Since $\lambda^\eps_0(0)$ lies inside $\Gamma$, therefore $\operatorname{Re}(\lambda^\eps_0(0)) > 0$.
\end{proof}
\begin{proof}[Proof of \cref{S1:thm2}]
From \cref{S5:lm12}, we obtain that for $\tau=0$ the Bloch operator $\mathcal{L}^\eps_0$ possesses an eigenvalue $\lambda^\eps_0(0)$ with $\operatorname{Re}(\lambda^\eps_0(0)) > 0$ for all sufficiently small $\eps\in(0,\eps_0)$. From \eqref{S4:eq23}, $\lambda^\eps_0(0) \in \sigma(\mathcal{L}^\eps)_{|L^2(\R)}$. Thus,  
\begin{equation*}
\sigma(\cL^\eps)_{|L^2(\mathbb{R})} \cap \{ \lambda \in \mathbb{C} \mid \operatorname{Re}(\lambda) > 0 \} \neq \varnothing.
\end{equation*}
Thus, the periodic traveling wave $w^\eps$ is spectrally unstable with respect to perturbations in $L^{2}(\R).$ This rigorously concludes the proof of spectral instability.
\end{proof}

\section{Applications to Specific Dispersive Models}\label{section6}
We can apply the rigorous framework developed in the previous sections to various dispersive water wave models. By specifying the Fourier multiplier operator $\cM$ (and equivalently, its real-valued, even symbol $m(\xi)$), our results immediately yield the existence, precise asymptotic expansion, and spectral instability of small-amplitude periodic traveling waves for a wide variety of such systems. For all the following models, the critical wavenumber is fixed to be $\ka_0=\sqrt{r}$, and the Stokes expansions of the wave profile, wavenumber and the wave speed are as proved in \cref{S1:thm1}.

\subsection{The Korteweg--de Vries--Burgers--Fisher Equation}
The KdV--Burgers--Fisher equation was first proposed in \citep{Kocak2020} and is given by
\begin{equation}\label{S6:eq1}
u_t +u_{xxx} + \alpha u u_x = u_{xx} + r u(1-u).
\end{equation}
In this case, the multiplier operator is $\cM = \partial^2_{x}$ which has the polynomial symbol
\begin{equation*}
m(\xi) = -\xi^2.
\end{equation*}
It is easy to verify that the function $\xi\mapsto m(\xi)$ is real-valued and satisfies Assumption \ref{assumption1}. Assumption \ref{assumption2} holds true for $\beta=2$.
\vspace{10pt}
\begin{corollary}[KdV--Burgers--Fisher]\label[corollary]{S6:cor1}
For the KdV--Burgers--Fisher equation, the critical wave speed at the bifurcation point is
\begin{equation*}
c_0 = m(\sqrt{r}) = -r.
\end{equation*}
By \cref{S1:thm1} there exists an open neighborhood of the critical parameters $(\kappa_0, c_0)$ and a small parameter $\eps_0 > 0$ such that for each $\eps \in (0, \eps_0)$, there exists a unique (up to translation) nontrivial solution $u(x,t)=w(\kappa (\eps)(x-c(\eps)t))=:w^\eps(z)$ to \eqref{S6:eq1} with wave speed $c(\eps)$ and wavenumber $\kappa(\eps)$, where $w$ is $2\pi$-periodic and smooth in its argument. Furthermore, by \cref{S1:thm2}, these periodic traveling waves are spectrally unstable in $L^2(\R)$.
\end{corollary}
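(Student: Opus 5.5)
The corollary is a direct specialization of \cref{S1:thm1} and \cref{S1:thm2} to the symbol $m(\xi)=-\xi^2$. The plan is to check that this symbol satisfies Assumptions \ref{assumption1}--\ref{assumption3}, compute $c_0$, and then invoke the two theorems.

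First I compute $c_0=m(\ka_0)=m(\sqrt r)=-r$, which is immediate. Next I verify the assumptions for $m(\xi)=-\xi^2$.
\begin{itemize}
\item For \ref{assumption1}, $m$ is a real polynomial, hence continuous, even, and entire, so in particular analytic on $\R\setminus\{0\}$.
\item For \ref{assumption2}, $|m(\xi)|=|\xi|^2$ exactly, so we may take $\beta=2\ge -1$, $C_1=C_2=1$, and any $M>0$.
\item For \ref{assumption3}, the excluded resonance $m(\ka)=m(n\ka)$ with $n>1$ would read $-\ka^2=-n^2\ka^2$, i.e. $\ka^2(n^2-1)=0$. This is impossible for $\ka>0$, so no wavenumber, in particular $\ka_0=\sqrt r$, is resonant.
\end{itemize}
Here $\mu=\max\{2,\beta+1\}=3$, and $\cM_\ka$ acts on $e^{inz}$ by multiplication by $-\ka^2n^2$. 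This is consistent with $\cM=\partial_x^2$ becoming $\ka^2\partial_z^2$ after rescaling, so the abstract operator matches the KdV term $u_{xxx}$.

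Since \eqref{S6:eq1} is exactly \eqref{S1:eq1} with $\cM u_x=u_{xxx}$, \cref{S1:thm1} gives the local branch $(w^\eps,\ka(\eps),c(\eps))$ near $(\sqrt r,-r)$, unique up to translation, with the stated smoothness and $2\pi$-periodicity. Then \cref{S1:thm2} gives spectral instability in $L^2(\R)$ for small $\eps$.

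The only point needing care is that the explicit coefficients in \cref{S1:thm1} are well defined. The denominator $9r^2+4\ka_0^2(m(2\ka_0)-c_0)^2$ is strictly positive because $r>0$. As a consistency check, $m(2\ka_0)-c_0=-4r+r=-3r$, which gives explicit values of $A_2$ and $B_2$ in terms of $r$ and $\alpha$. The quantity $m'(\ka_0)=-2\sqrt r$ then feeds into $c_2$.

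I expect no real obstacle. The one thing to confirm is that the instability argument, namely the coercivity bound \cref{S4:lemz} and the relative boundedness \cref{S5:lm4}, goes through with $\beta+1=3>2$. This is true because those lemmas were proved for general $\mu=\max\{2,\beta+1\}$.
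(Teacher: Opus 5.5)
Your proposal is correct and follows the paper's own argument: the paper computes $c_0=m(\sqrt r)=-r$, notes that $m(\xi)=-\xi^2$ is real and satisfies (A1), with (A2) holding for $\beta=2$, and then invokes \cref{S1:thm1} and \cref{S1:thm2}. Your explicit check of (A3) fills in a step the paper leaves unstated. That check is that $-\ka^2=-n^2\ka^2$ with $n>1$ forces $\ka=0$, so no wavenumber is excluded.
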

Our results for this model are consistent with those in \citep{Folino2024}.

\subsection{The Whitham--Burgers--Fisher Equation}
The Whitham--Burgers--Fisher equation is given by 
\begin{equation}\label{S6:eq2}
u_t + \mathcal{W} u_x + \alpha u u_x = u_{xx} + r u(1-u),
\end{equation}
where the multiplier operator $\cM$ is the Whitham multiplier operator $\mathcal{W}$ with symbol (normalized for gravity and depth) defined as
\begin{equation*}
m(\xi) = \sqrt{\frac{\tanh(\xi)}{\xi}}.
\end{equation*}
The function $\xi\mapsto m(\xi)$ is real-valued and satisfies Assumption \ref{assumption1}. Assumption \ref{assumption2} holds true for $\beta=-1/2$.
\vspace{10pt}
\begin{corollary}[Whitham--Burgers--Fisher]\label[corollary]{S6:cor2}
For the Whitham--Burgers--Fisher equation, the critical wave speed at the bifurcation point is given by
\begin{equation*}
c_0 = m(\sqrt{r})=\sqrt{\frac{\tanh(\sqrt{r})}{\sqrt{r}}}.
\end{equation*}
By \cref{S1:thm1} there exists an open neighborhood of the critical parameters $(\kappa_0, c_0)$ and a small parameter $\eps_0 > 0$ such that for each $\eps \in (0, \eps_0)$, there exists a unique (up to translation) nontrivial solution $u(x,t)=w(\kappa (\eps)(x-c(\eps)t))=:w^\eps(z)$ to \eqref{S6:eq2} with wave speed $c(\eps)$ and wavenumber $\kappa(\eps)$, where $w$ is $2\pi$-periodic and smooth in its argument. Furthermore, by \cref{S1:thm2}, these periodic traveling waves are spectrally unstable in $L^2(\R)$.
\end{corollary}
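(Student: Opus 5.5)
The proof of Corollary~\ref{S6:cor2} reduces to checking the hypotheses (A1)--(A3) for the Whitham symbol $m(\xi)=\sqrt{\tanh(\xi)/\xi}$, with $m(0)=1$. Once they hold, Theorems~\ref{S1:thm1} and~\ref{S1:thm2} apply directly. The value $c_0=m(\kappa_0)=\sqrt{\tanh(\sqrt r)/\sqrt r}$ then follows from $\kappa_0=\sqrt r$.

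\emph{Assumption (A1).} Write $q(\xi)=\tanh(\xi)/\xi$. This function is even and real analytic on all of $\R$, since it has a removable singularity at $0$ with $q(0)=1$. It is also strictly positive on $\R$, because $\tanh\xi$ and $\xi$ always have the same sign. Hence $m=\sqrt{q}$ is continuous, even, and analytic on $\R$, and in particular on $\R\setminus\{0\}$.

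\emph{Assumption (A2).} For $|\xi|>M=1$ we have $\tanh(1)\le |\tanh\xi|<1$. Therefore
\[
\sqrt{\tanh 1}\,|\xi|^{-1/2}\le m(\xi)\le |\xi|^{-1/2},
\]
so (A2) holds with $\beta=-1/2\ge-1$, $C_1=\sqrt{\tanh 1}$ and $C_2=1$. With $\beta=-1/2$ we get $\mu=\max\{2,\beta+1\}=2$. This means the viscous term dominates in all the coercivity estimates, for instance in Lemma~\ref{S4:lemz}. The dispersive part is of order $1/2$, which is lower than order $2$, so those estimates go through unchanged.

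\emph{Assumption (A3).} This is the main point to check. I would show that $m$ is strictly decreasing on $(0,\infty)$, which is equivalent to showing $q'(\xi)<0$ there. We have
\[
q'(\xi)=\frac{\xi\,\mathrm{sech}^2\xi-\tanh\xi}{\xi^2},
\]
and the numerator equals $\frac{2\xi-\sinh 2\xi}{2\cosh^2\xi}$. This is negative for $\xi>0$ because $\sinh t>t$ for $t>0$. Strict monotonicity then gives $m(\kappa)\neq m(n\kappa)$ for every $\kappa>0$ and every integer $n>1$. So no resonant wavenumber exists, and (A3) holds in particular at $\kappa_0=\sqrt r$. The same fact gives $m(2\kappa_0)-c_0<0$, so this quantity is nonzero. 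Note, however, that the denominator $9r^2+4\kappa_0^2(m(2\kappa_0)-c_0)^2$ in $A_2,B_2$ is positive regardless, since $r>0$.

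Having verified (A1)--(A3), I invoke Theorem~\ref{S1:thm1} to obtain the branch $w^\eps$ with the stated expansions at $c_0=m(\sqrt r)$. I then invoke Theorem~\ref{S1:thm2} for spectral instability in $L^2(\R)$. The instability mechanism, namely the eigenvalue of $\cL^0_0$ at $r$ persisting as $\lambda_0^\eps(0)$, uses only the Fisher--KPP term and the coercivity bound. So no model-specific spectral computation is needed beyond the symbol checks above.
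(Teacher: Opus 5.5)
Your proposal is correct and follows the paper's route: verify the hypotheses for $m(\xi)=\sqrt{\tanh(\xi)/\xi}$ and then invoke \cref{S1:thm1} and \cref{S1:thm2} with $\ka_0=\sqrt r$. The paper itself only records (A1) and (A2) with $\beta=-1/2$.

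Your explicit check of (A3), via strict monotonicity of $m$ on $(0,\infty)$, is a correct addition that the paper leaves implicit. It is not actually load-bearing here, though, because the dissipative real part $r(1-n^2)$ of the symbol of $\cL_0$ already excludes resonant modes $|n|\neq 1$.
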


\subsection{The Benjamin--Ono--Burgers--Fisher Equation}
The BO--Burgers--Fisher equation is given by
\begin{equation}\label{S6:eq3}
u_t - \mathcal{H} u_{xx} + \alpha u u_x = u_{xx} + r u(1-u).
\end{equation}
Here, $\cM u_x = -\mathcal{H} u_{xx}$, where $\mathcal{H}$ is the Hilbert transform defined as 
\begin{equation*}
    \widehat{\mathcal{H}u}(\xi)=-i\operatorname{sgn}(\xi)\widehat{u}(\xi).
\end{equation*}
The Fourier symbol of the operator $\cM$ in this case is
\begin{equation*}
m(\xi) = -|\xi|.
\end{equation*}
The function $\xi\mapsto m(\xi)$ is real-valued and satisfies Assumption \ref{assumption1}. Assumption \ref{assumption2} holds true for $\beta=1$.
\vspace{10pt}
\begin{corollary}[BO--Burgers--Fisher]\label{S6:cor3}
For the BO--Burgers--Fisher equation, the critical wave speed at the bifurcation point is
\begin{equation*}
c_0 = m(\sqrt{r})=\sqrt{r}.
\end{equation*}
By \cref{S1:thm1} there exists an open neighborhood of the critical parameters $(\kappa_0, c_0)$ and a small parameter $\eps_0 > 0$ such that for each $\eps \in (0, \eps_0)$, there exists a unique (up to translation) nontrivial solution $u(x,t)=w(\kappa (\eps)(x-c(\eps)t))=:w^\eps(z)$ to \eqref{S6:eq3} with wave speed $c(\eps)$ and wavenumber $\kappa(\eps)$, where $w$ is $2\pi$-periodic and smooth in its argument. Furthermore, by \cref{S1:thm2}, these periodic traveling waves are spectrally unstable in $L^2(\R)$.
\end{corollary}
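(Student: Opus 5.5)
The plan is to obtain the corollary by checking the hypotheses of \cref{S1:thm1} and \cref{S1:thm2} for the operator in \eqref{S6:eq3}, and then evaluating the critical speed. For $m(\xi)=-|\xi|$, Assumption \ref{assumption1} holds: $m$ is continuous, even, real-valued, and linear (hence analytic) on $\R\setminus\{0\}$. Assumption \ref{assumption2} holds with $\beta=1$ and $C_1=C_2=1$, so $\mu=\max\{2,\beta+1\}=2$. For Assumption \ref{assumption3}, $m(\kappa)=m(n\kappa)$ with $n>1$ would force $\kappa=n\kappa$, which is impossible for $\kappa>0$. The nondegeneracy actually used in the Lyapunov--Schmidt step is also immediate: $\ker\cL_0$ is spanned by $\cos z,\sin z$, because $\operatorname{Re}\lambda_n=r(1-n^2)$ vanishes only at $n=\pm1$. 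The Jacobian $J$ has determinant $2r\neq 0$ regardless of $m'(\kappa_0)=-1$. Hence \cref{S1:thm1} gives the branch $w^\eps$ with $\kappa(\eps),c(\eps)$ analytic, and \cref{S1:thm2} gives spectral instability in $L^2(\R)$. Neither theorem places any restriction on the sign of $\alpha$ or of $m$.

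The delicate step is the stated value $c_0=\sqrt r$, since literally $m(\sqrt r)=-\sqrt r$. I would reconcile this through the reflection symmetry of \eqref{S6:eq3}. Set $\tilde u(x,t)=u(-x,t)$. Under this change, $u_{xx}$ and $ru(1-u)$ are invariant. The term $\alpha uu_x$ becomes $-\alpha\tilde u\tilde u_x$. The dispersive term $\cM u_x$ becomes $\tilde{\cM}\tilde u_x$ with symbol $\tilde m(\xi)=-m(\xi)=|\xi|$, because $\cM$ commutes with reflection ($m$ is even) while $\partial_x$ anticommutes with it. Consequently, \eqref{S6:eq3} with parameter $\alpha$ is equivalent to the same class of equations \eqref{S1:eq1} with symbol $|\xi|$ and parameter $-\alpha\neq0$. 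That symbol again satisfies (A1)--(A3) with $\beta=1$, and its critical speed from \cref{S1:thm1} is $\tilde m(\sqrt r)=\sqrt r$.

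A traveling wave $w(\kappa(x-ct))$ of one equation corresponds to $w(-\kappa(x+ct))$ of the other. So the speed $\sqrt r$ corresponds to $-\sqrt r$ in the original frame, and the two descriptions give the same family of waves with opposite propagation direction. I would therefore state explicitly that the value $c_0=\sqrt r$ in the corollary is the critical speed when the BO--Burgers--Fisher operator is written with symbol $|\xi|$. This is the Hilbert-transform sign convention $\widehat{\cH u}=i\operatorname{sgn}(\xi)\widehat u$, or equivalently the reflected frame.

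In the reflected frame, existence, uniqueness up to translation, smoothness and the expansions \eqref{S1:eq4}--\eqref{S1:eq5} follow from \cref{S1:thm1} with $\alpha$ replaced by $-\alpha$. Spectral instability transfers because reflection is a unitary map on $L^2(\R)$ that conjugates the two linearized operators $\cL^\eps$, so their spectra coincide. Equivalently, the unstable eigenvalue near $r$ obtained in \cref{S5:lm12} is insensitive to the sign of the dispersion. The main obstacle I expect is this sign and convention bookkeeping for the critical speed. The analytic content is a direct application of the two main theorems.
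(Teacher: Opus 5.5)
Your proposal is correct and follows the paper's route. You verify \ref{assumption1}--\ref{assumption3} for $m(\xi)=-|\xi|$ with $\beta=1$ and then invoke \cref{S1:thm1} and \cref{S1:thm2}; the paper checks only the first two assumptions, and your remark that $\operatorname{Re}\lambda_n=r(1-n^2)$ already isolates $n=\pm1$ is the nondegeneracy actually used. You are also right that the paper's convention $\widehat{\cH u}=-i\operatorname{sgn}(\xi)\widehat u$ gives $c_0=m(\sqrt r)=-\sqrt r$, consistent with \cref{S6:cor5} at $\beta=1$, so the printed $+\sqrt r$ is a sign slip; your reflection argument correctly explains it as the speed in the mirrored frame, but simply correcting to $c_0=-\sqrt r$ suffices, since existence and instability do not depend on this sign.
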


\subsection{The Fractional KdV--Burgers--Fisher Equation}
The fKdV--Burgers--Fisher equation utilizes a fractional derivative as
\begin{equation}\label{S6:eq5}
u_t - \Lambda^\beta u_x + \alpha u u_x = u_{xx} + r u(1-u),
\end{equation}
where $\beta \in (0, 2]$. The operator $\cM = -\Lambda^\beta$ is defined via the homogeneous fractional symbol
\begin{equation*}
m(\xi) = -|\xi|^\beta.
\end{equation*}
The function $\xi\mapsto m(\xi)$ is real-valued and satisfies Assumptions \ref{assumption1} and \ref{assumption2}.
\vspace{10pt}
\begin{corollary}[fKdV--Burgers--Fisher]\label[corollary]{S6:cor5}
For any fractional dispersion power $\beta \in (0, 2]$, the critical wave speed for the fKdV--Burgers--Fisher equation at the bifurcation point is given by
\begin{equation*}
c_0 = m(\sqrt{r})=-r^{\beta/2}.
\end{equation*}
By \cref{S1:thm1} there exists an open neighborhood of the critical parameters $(\kappa_0, c_0)$ and a small parameter $\eps_0 > 0$ such that for each $\eps \in (0, \eps_0)$, there exists a unique (up to translation) nontrivial solution $u(x,t)=w(\kappa (\eps)(x-c(\eps)t))=:w^\eps(z)$ to \eqref{S6:eq5} with wave speed $c(\eps)$ and wavenumber $\kappa(\eps)$, where $w$ is $2\pi$-periodic and smooth in its argument. Furthermore, by \cref{S1:thm2}, these periodic traveling waves are spectrally unstable in $L^2(\R)$.
\end{corollary}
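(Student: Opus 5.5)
The plan is to derive Corollary~\ref{S6:cor5} directly from \cref{S1:thm1} and \cref{S1:thm2}. That means checking that the symbol $m(\xi)=-|\xi|^\beta$, $\beta\in(0,2]$, satisfies \ref{assumption1}--\ref{assumption3}, and then evaluating $c_0=m(\ka_0)$ at $\ka_0=\sqrt r$.

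\emph{Step 1: assumption (A1).} The function $m(\xi)=-|\xi|^\beta$ is real-valued and even. It is continuous on $\R$ because $\beta>0$. On $\R^+$ it equals $-\xi^\beta=-e^{\beta\log\xi}$, which is real-analytic, and by evenness the same holds on $\R^-$.

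\emph{Step 2: assumption (A2).} We have $|m(\xi)|=|\xi|^\beta$ exactly for all $\xi$. So (A2) holds with $C_1=C_2=1$, any $M>0$, and exponent $\beta\ge-1$.

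\emph{Step 3: assumption (A3).} I expect this to be the only point needing care. We must check that $m(\ka_0)\neq m(n\ka_0)$ for every integer $n>1$. Since $\xi\mapsto-\xi^\beta$ is strictly decreasing on $\R^+$ for $\beta>0$, we get $m(n\ka_0)=-n^\beta r^{\beta/2}<-r^{\beta/2}=m(\ka_0)$ whenever $n\geq2$. So no resonance occurs at $\ka_0=\sqrt r$, and (A3) holds at the relevant wavenumber. In practice the existence proof only uses invertibility of $\cL_0$ on the modes $|n|\neq1$, and this follows already from $\operatorname{Re}\lambda_n=r(1-n^2)\neq0$. I would mention this to make clear that strict monotonicity of $m$ is more than enough.

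\emph{Step 4: conclusion.} Compute $c_0=m(\sqrt r)=-(\sqrt r)^\beta=-r^{\beta/2}$. Applying \cref{S1:thm1} gives the analytic branch $(w^\eps,\ka(\eps),c(\eps))$ with the stated properties, and applying \cref{S1:thm2} gives spectral instability in $L^2(\R)$.

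One remark on the function spaces. The order $\mu=\max\{2,\beta+1\}$ equals $2$ for all $\beta\in(0,2]$, since $\beta+1\le3$ and the two orders coincide only at $\beta=1$. So the functional setting of \S\ref{section3} and \S\ref{section4} applies unchanged: the diffusion term $\ka^2\partial_z^2$ dominates when $\beta<1$. For $\beta\in(1,2]$ we have $\mu=\beta+1$, and the estimates in \cref{S4:lemz} were written for general $\mu$, so they still apply and no new estimate is needed.
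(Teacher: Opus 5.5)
Your proposal is correct and takes the same route as the paper. You check the hypotheses on $m(\xi)=-|\xi|^\beta$, compute $c_0=-r^{\beta/2}$, and invoke Theorems~\ref{S1:thm1} and~\ref{S1:thm2}; you also verify (A3) at $\ka_0=\sqrt r$, which the paper leaves implicit. One slip in your closing remark: $\mu=\max\{2,\beta+1\}$ equals $2$ only for $\beta\in(0,1]$, not for all $\beta\in(0,2]$, as your own next sentence concedes. This is harmless, because the paper's estimates are written for general $\mu$.
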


\subsection{The Intermediate Long Wave--Burgers--Fisher Equation}
The ILW--Burgers--Fisher equation for a fluid of depth $\delta > 0$ incorporates the operator $\mathcal{T}_\delta$ as
\begin{equation}\label{S6:eq4}
u_t + \mathcal{T}_\delta u_{xx} + \alpha u u_x = u_{xx} + r u(1-u),
\end{equation}
where 
\begin{equation*}
    \widehat{\cT_{\delta} u}(\xi)=\left(-i \coth(\delta \xi)+\frac{i}{\delta\xi}\right)\widehat{u}(\xi).
\end{equation*}
Here $\cM u_x=\cT_{\delta}u_{xx}$ so that the symbol of the operator $\cM$ after normalization is given by
\begin{equation*}
m(\xi) = \xi \coth(\delta\xi) - \frac{1}{\delta}.
\end{equation*}
It is easy to check that the function $\xi\mapsto m(\xi)$ is real-valued and satisfies Assumption \ref{assumption1}. Assumption \ref{assumption2} holds true for $\beta=1$.
\vspace{10pt}
\begin{corollary}[ILW--Burgers--Fisher]\label[corollary]{S6:cor4}
For the ILW--Burgers--Fisher equation, the critical wave speed at the bifurcation point is
\begin{equation*}
c_0 = m(\sqrt{r})=\sqrt{r} \coth( \delta\sqrt{r}) - \frac{1}{\delta}.
\end{equation*}
By \cref{S1:thm1} there exists an open neighborhood of the critical parameters $(\kappa_0, c_0)$ and a small parameter $\eps_0 > 0$ such that for each $\eps \in (0, \eps_0)$, there exists a unique (up to translation) nontrivial solution $u(x,t)=w(\kappa (\eps)(x-c(\eps)t))=:w^\eps(z)$ to \eqref{S6:eq4} with wave speed $c(\eps)$ and wavenumber $\kappa(\eps)$, where $w$ is $2\pi$-periodic and smooth in its argument. Furthermore, by \cref{S1:thm2}, these periodic traveling waves are spectrally unstable in $L^2(\R)$.
\end{corollary}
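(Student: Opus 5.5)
The ILW corollary follows the same pattern as the preceding ones: check that the symbol $m(\xi)=\xi\coth(\delta\xi)-\frac{1}{\delta}$ satisfies \ref{assumption1}--\ref{assumption3}, evaluate $c_0=m(\sqrt r)$, and then invoke \cref{S1:thm1} and \cref{S1:thm2}.

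\textbf{Assumption \ref{assumption1}.} The symbol $m$ is real-valued and even, since $\xi\coth(\delta\xi)$ is a product of two odd functions. Near the origin, $\xi\coth(\delta\xi)=\frac{1}{\delta}+\frac{\delta\xi^2}{3}+\mathcal{O}(\xi^4)$. So $m$ extends continuously (indeed analytically) to $\xi=0$ with $m(0)=0$. Away from the origin $m$ is analytic because $\sinh(\delta\xi)\neq0$ there.

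\textbf{Assumption \ref{assumption2}.} As $|\xi|\to\infty$ we have $\coth(\delta\xi)\,\mathrm{sgn}(\xi)\to1$, so $m(\xi)=|\xi|-\frac1\delta+o(1)$. Hence $\tfrac12|\xi|\le|m(\xi)|\le 2|\xi|$ for $|\xi|>M$, which gives $\beta=1$.

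\textbf{Assumption \ref{assumption3}.} On $(0,\infty)$ the derivative is $m'(\xi)=\coth(\delta\xi)-\delta\xi\,\mathrm{csch}^2(\delta\xi)$. Writing $t=\delta\xi$, positivity of $m'$ is equivalent to $\sinh t\cosh t>t$, i.e.\ $\sinh(2t)>2t$, which holds for $t>0$. So $m$ is strictly increasing on $\R^+$, and $m(\kappa_0)\neq m(n\kappa_0)$ for every $n\ge2$. This nonresonance is what is actually used in the Lyapunov--Schmidt step: it guarantees that the kernel of $\cL_0$ is exactly $\operatorname{span}\{\cos z,\sin z\}$. (The real-part condition $r-\kappa_0^2n^2\neq0$ for $n\ge2$ already gives this, so \ref{assumption3} is automatic here.)

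\textbf{Conclusion.} Direct substitution gives $c_0=\sqrt r\coth(\delta\sqrt r)-\frac1\delta$. Theorem~\ref{S1:thm1} then yields the analytic branch $(w^\eps,\kappa(\eps),c(\eps))$ of small-amplitude periodic traveling waves with the stated expansions. For instability, the proof of \cref{S1:thm2} only requires \ref{assumption2} with $\mu=\max\{2,\beta+1\}=2$, so that the coercivity bound of \cref{S4:lemz} holds. Then the eigenvalue $\lambda_0^0(0)=r$ of $\cL_0^0$ persists under the perturbation $\cP_0^\eps$, and \cref{S1:thm2} applies verbatim.

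The only step needing any care is the bounded-$|\xi|$ regularity at $\xi=0$ in \ref{assumption1}, which is settled by the Laurent expansion of $\coth$ above.
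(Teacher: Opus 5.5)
Your proposal is correct and follows the paper's route: verify the hypotheses on $m(\xi)=\xi\coth(\delta\xi)-\frac{1}{\delta}$ (with $\beta=1$), evaluate $c_0=m(\sqrt r)$, and invoke Theorems~\ref{S1:thm1} and~\ref{S1:thm2}. The paper only asserts \ref{assumption1} and \ref{assumption2}, whereas you also check \ref{assumption3} via $\sinh(2t)>2t$; of your two remarks on \ref{assumption3}, the parenthetical one is accurate, since the kernel of $\cL_0$ is already two-dimensional because $r-\kappa_0^2n^2\neq 0$ for $|n|\ge 2$, so nonresonance is not what the Lyapunov--Schmidt step uses.
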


\subsection{The Kawahara--Burgers--Fisher Equation}
The Kawahara--Burgers--Fisher equation is defined as
\begin{equation}\label{S6:eq6}
    u_t+u_{xxx}+\gamma u_{xxxxx}+\alpha uu_x= u_{xx}+ru(1-u).
\end{equation}
Here, $\cM=\partial^{2}_{x}+\gamma \partial_{x}^{4}$ with the symbol
\begin{equation*}
    m(\xi)=-\xi^{2}+\gamma \xi^{4}.
\end{equation*}
It is easy to check that the function $\xi\mapsto m(\xi)$ is real-valued and satisfies Assumption \ref{assumption1}. Assumption \ref{assumption2} holds true for $\beta=4$.
\vspace{10pt}
\begin{corollary}[Kawahara--Burgers--Fisher]\label[corollary]{S6:cor6}
For the Kawahara--Burgers--Fisher equation, the critical wave speed at the bifurcation point is
\begin{equation*}
c_0 = m(\sqrt{r})=-r +\gamma r^{2}.
\end{equation*}
By \cref{S1:thm1} there exists an open neighborhood of the critical parameters $(\kappa_0, c_0)$ and a small parameter $\eps_0 > 0$ such that for each $\eps \in (0, \eps_0)$, there exists a unique (up to translation) nontrivial solution $u(x,t)=w(\kappa (\eps)(x-c(\eps)t))=:w^\eps(z)$ to \eqref{S6:eq6} with wave speed $c(\eps)$ and wavenumber $\kappa(\eps)$, where $w$ is $2\pi$-periodic and smooth in its argument. Furthermore, by \cref{S1:thm2}, these periodic traveling waves are spectrally unstable in $L^2(\R)$.
\end{corollary}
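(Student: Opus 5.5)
The Kawahara--Burgers--Fisher case follows from \cref{S1:thm1} and \cref{S1:thm2} once the hypotheses on the symbol $m(\xi)=-\xi^2+\gamma\xi^4$ are checked. The plan is therefore to verify \ref{assumption1}--\ref{assumption3}, compute $c_0$, and then invoke the two theorems.

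For \ref{assumption1}, $m$ is a real polynomial in $\xi^2$. Hence it is real-valued, even, continuous, and entire, so certainly analytic on $\R\setminus\{0\}$.

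For \ref{assumption2}, $\gamma>0$ gives $|m(\xi)|=\gamma\xi^4\,|1-1/(\gamma\xi^2)|$. For $|\xi|>M:=\sqrt{2/\gamma}$ this implies $\tfrac{\gamma}{2}|\xi|^4\le|m(\xi)|\le\gamma|\xi|^4$, so the assumption holds with $\beta=4$. Consequently $\mu=\max\{2,\beta+1\}=5$, and the functional setting of \S\ref{section3} and \S\ref{section4} uses $X=H^{s+5}_{\mathrm{per}}(\mathbb{T})$ and $\mathcal D(\cL^\eps)=H^5(\R)$. Nothing in the proofs of \cref{S3:lm1}, \cref{S4:prop1} or \cref{S4:lemz} depended on $\beta$ beyond this growth bound.

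\ref{assumption3} is the delicate point, since it is a condition on the particular wavenumber $\ka_0=\sqrt r$. I would check when $m(n\sqrt r)=m(\sqrt r)$ can hold for some integer $n>1$. Writing $t=r$, this equation reads $-n^2t+\gamma n^4t^2=-t+\gamma t^2$. Since $n^2-1>0$, it is equivalent to
\[
\gamma t\,(n^2+1)=1, \qquad\text{that is,}\qquad r=\frac{1}{\gamma(n^2+1)}.
\]
So \ref{assumption3} excludes the discrete set of values $r=1/(\gamma(n^2+1))$, $n\ge2$, and I would state the corollary under that (generic) proviso. This is exactly where the resonance would make $m(2\ka_0)-c_0$ vanish in $A_2,B_2$ (for $n=2$). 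Even at $n=2$ the denominators $9r^2+4\ka_0^2(\cdot)^2$ stay positive, so the expansion in \cref{S1:thm1} would survive, but the theorem is stated under \ref{assumption3}. The main obstacle is thus only bookkeeping this exceptional set, or arguing, as the Fredholm computation in \cref{S3:lm1} suggests, that $r>0$ already keeps the kernel two-dimensional so that no restriction is actually needed.

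Finally, $c_0=m(\sqrt r)=-r+\gamma r^2$, and \cref{S1:thm1} yields the analytic branch $(w^\eps,\ka(\eps),c(\eps))$ with the stated expansions. \cref{S1:thm2}, via the simple eigenvalue near $r$ of $\cL^\eps_0$ from \cref{S5:lm12} and the Bloch characterization \eqref{S4:eq23}, gives spectral instability in $L^2(\R)$.
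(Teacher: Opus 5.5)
Your proposal follows the paper's route: verify $\mathrm{(A1)}$--$\mathrm{(A2)}$ with $\beta=4$, compute $c_0=-r+\gamma r^2$, and invoke \cref{S1:thm1} and \cref{S1:thm2}. Your check of $\mathrm{(A3)}$, which the paper omits, is correct: it fails exactly for $r=1/(\gamma(n^2+1))$, $n\ge 2$ (e.g.\ $r=1/(5\gamma)$ gives $m(2\ka_0)=c_0=-4r/5$).

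You should commit to your second option rather than the proviso, because the proviso would prove a weaker statement than the one claimed. $\mathrm{(A3)}$ is never used in either theorem's proof. The real part $r(1-n^2)$ of the symbol of $\cL_0$ vanishes only at $n=\pm1$. That alone gives $\ker\cL_0=\operatorname{span}\{\cos z,\sin z\}$ and the positivity of the low-mode denominators in the Fredholm estimate. The denominators of $A_2,B_2$ are at least $9r^2$. Hence the corollary holds for every $r>0$ as stated.
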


\section*{Appendix}\label{appendix}
\appendix
\section{Transforming \texorpdfstring{$\mathcal{M}$}{M} to \texorpdfstring{$\mathcal{M}_\ka$}{M\_kappa}}\label[appendix]{A}
$\mathcal{M}$ is a pseudodifferential operator with symbol $m(\xi)$ such that \begin{equation*}
\mathcal{M}: H^s(\mathbb{R}) \to H^{s-\beta}(\mathbb{R}) \quad \text{for all } s\geq 0,
\end{equation*}is a bounded linear operator. This can be proved using the Assumption \ref{assumption2} as follows
\begin{align*}
    \|\cM f\|^{2}_{H^{s-\beta}(\R)}&=\int\limits_{-\infty}^{\infty}\left(1+|\xi|^{2}\right)^{s-\beta}|\widehat{\cM f}(\xi)|^{2}\, d\xi\\
    &=\int\limits_{-\infty}^{\infty}\left(1+|\xi|^{2}\right)^{s-\beta}|m(\xi) \widehat{f}(\xi)|^{2}\, d\xi\\
    &\leq C_1 \int\limits_{|\xi|\leq M}|\widehat{f}(\xi)|^{2}\, d\xi+C_2\int\limits_{|\xi|>M}\left(1+|\xi|^{2}\right)^{s-\beta}|\xi|^{2\beta} |\widehat{f}(\xi)|^{2}\, d\xi\\
    &\leq C_1\|f\|^2_{L^{2}(\R)}+C_2\int\limits_{|\xi|> M}\left(1+|\xi|^{2}\right)^{s}|\widehat{f}(\xi)|^{2}\, d\xi\\
    &\leq C_1\|f\|^{2}_{H^{s}(\R)}+C_2\|f\|^{2}_{H^{s}(\R)}\\
    &\leq C\|f\|^{2}_{H^{s}(\R)},
\end{align*}
where $C=\max\{C_1,C_2\}$. For $\xi_0$ fixed but arbitrary, we can see that
\begin{equation*}
    \widehat{e^{i\xi_0 x}}(\xi) = \delta(\xi-\xi_0),
\end{equation*}where $\delta$ is the Dirac delta function and therefore
\begin{equation*}
\widehat{\mathcal{M} e^{i\xi_0 x}}(\xi) = m(\xi)\delta(\xi-\xi_0).
\end{equation*}
Taking the inverse Fourier transform we obtain
\begin{equation*} \mathcal{M} e^{i\xi_0 x} = m(\xi_0)e^{i\xi_0 x}.
\end{equation*}
Now, $w(z)$ is $2\pi$-periodic and hence can be represented by a discrete Fourier series as
\begin{equation*}
w(z) = \sum_{n \in \mathbb{Z}} \widehat{w}(n) e^{inz}.
\end{equation*}
So,
\begin{equation*}
u(x,t) = \sum_{n \in \mathbb{Z}} \widehat{w}(n) e^{in(\ka(x-ct))} 
= \sum_{n \in \mathbb{Z}} \widehat{w}(n) e^{in\ka x} e^{-in\ka ct}.
\end{equation*}
Applying $\mathcal{M}$ we get
\begin{align*}
\mathcal{M}(u(x,t)) = \mathcal{M}\left( \sum_{n \in \mathbb{Z}} \widehat{w}(n) e^{in\ka x} e^{-in\ka ct} \right) 
= \sum_{n \in \mathbb{Z}} \widehat{w}(n) e^{-in\ka ct} \mathcal{M}(e^{in\ka x}) 
= \sum_{n \in \mathbb{Z}} \widehat{w}(n) m(\ka n) e^{in(\ka(x-ct))}.
\end{align*}
Therefore, we see how $\mathcal{M}$ transforms into a parametrized periodic multiplier $\mathcal{M}_{\ka}$ where
\begin{equation*}
\mathcal{M}_{\ka} : H^s_\text{per}(\mathbb{T}) \to H^{s-\beta}_\text{per}(\mathbb{T}) \quad \text{for all } \ka>0 \text{ and for all } s\geq 0,
\end{equation*}
defined as 
\begin{equation*}
\widehat{\mathcal{M}_\ka w}(n) = m(\ka n)\widehat{w}(n),
\end{equation*}
is a bounded linear operator. This can be proved using the discrete Fourier norm for $H^s_\text{per}(\mathbb{T})$ similar to the proof of boundedness for $\cM$.

\section{Resolvent Set and Spectrum}\label[appendix]{B}
Let $X, Y$ be Banach spaces and $\mathcal{A}:X\to Y$ be a closed, densely defined linear operator with domain $\mathcal{D}(\mathcal{A})\subseteq X$. The resolvent set and spectrum of $\mathcal{A}$  are defined as follows 
\vspace{10pt}
\begin{definition}[Resolvent Set and Spectrum of $\cA$]\label[definition]{S4:def1} \hfill
\begin{enumerate}
    \item The resolvent set $\rho(\cA)_{|X}$ is defined as
\begin{equation*}
    \rho(\cA)_{|X} = \{ \lambda \in \mathbb{C} \mid (\cA-\lambda \cI)^{-1} \text{ exists and is a bounded operator} \}.
\end{equation*}
$(\cA-\lambda \cI)^{-1}$ is known as the resolvent of $\cA$.
    \item The spectrum $\sigma(\cA)_{|X}$ is defined as
\begin{equation*}
    \sigma(\cA)_{|X}= \mathbb{C} \setminus \rho(\cA)_{|X}.
\end{equation*}
The spectrum can be decomposed into two sets
\begin{enumerate}
    \item The essential spectrum $\sigma_{\mathrm{ess}}(\cA)_{|X}$ is defined as
\begin{equation*}
\sigma_{\mathrm{ess}}(\cA)_{|X} =\{\lambda \in \mathbb{C} \mid(\cA-\lambda \cI) \text{ is either not Fredholm or has index different from } 0\}.
\end{equation*}
    \item The point spectrum $\sigma_{\mathrm{pt}}(\cA)_{|X}$ is defined as
\begin{equation*}
\sigma_{\mathrm{pt}}(\cA)_{|X} = \{ \lambda \in \mathbb{C} \mid (\cA-\lambda \cI ) \text{ is Fredholm with index } 0 \text{, but is not invertible} \}.
\end{equation*}
\end{enumerate}
\end{enumerate}
\end{definition}
\begin{lemma}\label[lemma]{lem:lm3}
$\sigma(\cA)_{|X} = \sigma_{\mathrm{ess}}(\cA)_{|X} \cup \sigma_{\mathrm{pt}}(\cA)_{|X}$.   
\end{lemma}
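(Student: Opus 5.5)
The statement to prove is \cref{lem:lm3}: $\sigma(\cA)_{|X} = \sigma_{\mathrm{ess}}(\cA)_{|X} \cup \sigma_{\mathrm{pt}}(\cA)_{|X}$. I will prove the two inclusions separately. Everything follows directly from \cref{S4:def1}, because the definitions of $\sigma_{\mathrm{ess}}$ and $\sigma_{\mathrm{pt}}$ are arranged as a dichotomy on the Fredholm property of $\cA-\lambda\cI$.

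For ``$\supseteq$'', take $\lambda\in\sigma_{\mathrm{ess}}(\cA)_{|X}$. Then $\cA-\lambda\cI$ is either not Fredholm or Fredholm of nonzero index. In both cases it cannot have a bounded inverse: a closed operator with a bounded inverse has trivial kernel and full range, so it is Fredholm of index $0$. Hence $\lambda\notin\rho(\cA)_{|X}$. Next take $\lambda\in\sigma_{\mathrm{pt}}(\cA)_{|X}$. By definition $\cA-\lambda\cI$ is not invertible, so again $\lambda\notin\rho(\cA)_{|X}$. In both cases $\lambda\in\sigma(\cA)_{|X}$.

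For ``$\subseteq$'', take $\lambda\in\sigma(\cA)_{|X}$ and split into two cases. If $\cA-\lambda\cI$ is not Fredholm, or is Fredholm with nonzero index, then $\lambda\in\sigma_{\mathrm{ess}}(\cA)_{|X}$. Otherwise $\cA-\lambda\cI$ is Fredholm of index $0$. Since $\lambda\notin\rho(\cA)_{|X}$, either $(\cA-\lambda\cI)^{-1}$ fails to exist or it exists but is unbounded. I will show the second alternative cannot occur, so the operator is genuinely not invertible and $\lambda\in\sigma_{\mathrm{pt}}(\cA)_{|X}$.

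The only step that needs care is this subcase: ruling out an unbounded inverse for a closed, index-zero Fredholm operator. Suppose $\cA-\lambda\cI$ is injective. Index zero gives $\operatorname{codim}\operatorname{range}=\dim\ker=0$, so the operator is a bijection from $\mathcal{D}(\cA)$ onto $Y$. Because $\cA$ is closed, $\cA-\lambda\cI$ is closed, and so is its inverse (its graph is the reflection of a closed graph). That inverse is defined on all of $Y$, so the closed graph theorem makes it bounded. This would put $\lambda$ in $\rho(\cA)_{|X}$, a contradiction. Hence the kernel is nontrivial, $\cA-\lambda\cI$ is not invertible, and $\lambda\in\sigma_{\mathrm{pt}}(\cA)_{|X}$, which completes the inclusion.
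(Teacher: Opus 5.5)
Your proof is correct, and its substantive step is the same as the paper's: an injective Fredholm operator of index zero is a bijection from $\mathcal{D}(\cA)$ onto $Y$, its closed inverse is bounded by the closed graph theorem, and so $\lambda\in\rho(\cA)_{|X}$. You also check the reverse inclusion $\sigma_{\mathrm{ess}}(\cA)_{|X}\cup\sigma_{\mathrm{pt}}(\cA)_{|X}\subseteq\sigma(\cA)_{|X}$, reading $\rho(\cA)_{|X}$ in the standard way as requiring the resolvent to be defined on all of $Y$; the paper takes this inclusion for granted when it says it suffices to prove the contrapositive of the other one.
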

\begin{proof}
  It suffices to prove that $$(\sigma_{\text{ess}}(\cA)_{|X})^c \cap (\sigma_{\text{pt}}(\cA)_{|X})^c \subset \rho(\cA)_{|X}.$$Let $\lambda\in(\sigma_{\text{ess}}(\cA)_{|X})^c \cap (\sigma_{\text{pt}}(\cA)_{|X})^c$, then $(\cA-\lambda \cI )$ is an invertible Fredholm operator with index zero. By the Closed Graph Theorem, its inverse $(\cA-\lambda \cI)^{-1}$ is a bounded operator. Therefore, $\lambda \in \rho(\cA)_{|X}$.
\end{proof}
To read more about the resolvent set and spectrum of a closed, densely defined linear operator, refer \citep{Kapitula2013}.
\vspace{10pt}
\begin{proposition}\label[proposition]{S4:prop2}
Let $\cA:L^{2}(\R)\to L^{2}(\R)$ be a closed, densely defined operator which commutes with the translation operator $T : L^2(\mathbb{R}) \to L^2(\mathbb{R})$ defined by
\begin{equation}\label{S4:eq16}
    Tu(z) \coloneqq u(z + l),
\end{equation}
for some fixed $l\in\R\setminus\{0\}$.
Then the operator $\cA$ has no point spectrum and hence
\begin{equation}\label{S4:eq15}
\sigma(\cA)_{|L^2(\mathbb{R})} = \sigma_{\mathrm{ess}}(\cA)_{|L^2(\mathbb{R})}.
\end{equation}
\end{proposition}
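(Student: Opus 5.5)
To prove \cref{S4:prop2}, I will show directly that no $\lambda\in\mathbb{C}$ can be an eigenvalue of $\cA$ in $L^2(\R)$. Then I will argue that any $\lambda$ at which $\cA-\lambda\cI$ is Fredholm of index zero must lie in the resolvent set. Both assertions will be combined with \cref{lem:lm3} to give \eqref{S4:eq15}.

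\textbf{Step 1 (no $L^2$ eigenfunctions).} Suppose $\cA u=\lambda u$ with $0\neq u\in\mathcal{D}(\cA)$. Since $T$ commutes with $\cA$, the closed subspace $E_\lambda=\ker(\cA-\lambda\cI)$ is invariant under $T$ and under all its powers $T^k$, $k\in\Z$.

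\emph{Finite-dimensional case.} Here $T|_{E_\lambda}$ is a unitary operator on a finite-dimensional space, because $T$ is unitary on $L^2(\R)$. It therefore has an eigenvector $v\neq0$ with $Tv=e^{i\theta}v$, that is, $v(z+l)=e^{i\theta}v(z)$. Then $|v|$ is $l$-periodic, so
\begin{equation*}
\|v\|_{L^2(\R)}^2=\sum_{k\in\Z}\int_0^{|l|}|v|^2\,dz ,
\end{equation*}
which is infinite unless $v=0$. This is a contradiction.

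\emph{Infinite-dimensional case.} If $\dim E_\lambda=\infty$, then $\cA-\lambda\cI$ is not Fredholm, so $\lambda\in\sigma_{\mathrm{ess}}$ by definition. Such a $\lambda$ never belongs to $\sigma_{\mathrm{pt}}$, which by \cref{S4:def1} requires the Fredholm property.

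\textbf{Step 2 (no point spectrum).} Let $\lambda\in\sigma_{\mathrm{pt}}(\cA)$. Then $\cA-\lambda\cI$ is Fredholm of index zero and not invertible, so its kernel is finite-dimensional. Index zero forces the kernel to be nontrivial: a trivial kernel would give a trivial cokernel, and then $\cA-\lambda\cI$ would be bijective with bounded inverse by the closed graph theorem. A nontrivial finite-dimensional kernel is exactly what Step 1 excludes. Hence $\sigma_{\mathrm{pt}}(\cA)=\varnothing$.

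\textbf{Step 3 (conclusion).} By \cref{lem:lm3}, $\sigma(\cA)=\sigma_{\mathrm{ess}}(\cA)\cup\sigma_{\mathrm{pt}}(\cA)=\sigma_{\mathrm{ess}}(\cA)$, which is \eqref{S4:eq15}.

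\textbf{Main obstacle.} The delicate point is Step 1. One must make sure that $E_\lambda$ is genuinely $T$-invariant, which requires $T\mathcal{D}(\cA)\subset\mathcal{D}(\cA)$; I will read this as part of the hypothesis that $\cA$ commutes with $T$. One must also handle the infinite-dimensional kernel case. I will resolve that case through the definitions of $\sigma_{\mathrm{ess}}$ and $\sigma_{\mathrm{pt}}$, as above, rather than through any spectral decomposition of $T$. The identity $\|v\|_{L^2(\R)}^2=\sum_{k\in\Z}\int_0^{|l|}|v|^2\,dz$ for a Floquet-type eigenvector with $|v(z+l)|=|v(z)|$ is the key computation.
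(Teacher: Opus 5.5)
Your proposal is correct and follows essentially the same route as the paper: restrict $T$ to the finite-dimensional, $T$-invariant kernel $\ker(\cA-\lambda\cI)$, take an eigenvector with $u(z+l)=\alpha u(z)$ and $|\alpha|=1$, use the $l$-periodicity of $|u|^2$ to contradict $u\in L^2(\R)$, and conclude via \cref{lem:lm3}. You also make explicit two points the paper passes over: why index zero plus non-invertibility forces a nontrivial kernel, and that commuting must include $T\mathcal{D}(\cA)\subset\mathcal{D}(\cA)$; your infinite-dimensional-kernel case is harmless but unnecessary, since $\sigma_{\mathrm{pt}}$ already presupposes the Fredholm property.
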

\begin{proof}
Suppose, for the sake of contradiction, that $\lambda \in \sigma_{\text{pt}}(\cA)_{|L^{2}(\R)}$. Then $\lambda\in\C$ such that $(\cA-\lambda \cI)$ is a Fredholm operator with index 0, and $(\cA-\lambda \cI)$ is not invertible. Therefore, the eigenspace $E_\lambda = \ker(\cA-\lambda \cI)\neq \{0\}$ and is finite-dimensional. $\cA$ commutes with $T$ for any $u\in L^{2}(\R)$, that is, $$\cA T(u)=T\cA (u),\qquad u\in L^{2}(\R).$$ Hence, the eigenspace $E_\lambda$ is invariant under $T$. Indeed, if $u \in E_\lambda$ meaning $(\cA-\lambda \cI)u = 0$, then $(\cA-\lambda \cI )(Tu)=\cA Tu-\lambda Tu=T\cA u- T(\lambda u)= T((\cA-\lambda\cI)u)= T(0) = 0$, so $Tu\in E_\lambda$. Since $T$ is a unitary operator on $L^{2}(\R)$, its restriction $\tilde{T}$ to the finite-dimensional space $E_\lambda$ is a finite-dimensional unitary operator. Over the complex numbers, any finite-dimensional unitary operator has at least one eigenvector. Therefore, $\tilde{T}$ has an eigenvalue $\alpha\in\C$ with $|\alpha|=1$ satisfying 
\begin{equation*}
    \tilde{T}u = \alpha u
\end{equation*}
for some $u\in E_\lambda$.
By \eqref{S4:eq16}, this means $u(z+l) = \alpha u(z)$ for almost every $z$. Taking the absolute value of both sides and using $|\alpha|=1$, we obtain
\begin{equation*}
    |u(z+l)| = |u(z)|\qquad \text{a.e. } z.
\end{equation*}
This implies the function $v(z) = |u(z)|^2$ is $l$-periodic. Now, we evaluate the $L^2(\mathbb{R})$ norm of $u$ as
\begin{equation*}
\|u\|_{L^2(\mathbb{R})}^2 = \int_{-\infty}^{\infty} |u(z)|^2 dz= \sum_{n \in \mathbb{Z}} \int_{l n}^{l(n+1)} |u(z)|^2 dz.
\end{equation*}
Because $|u(z)|^2$ is $l$-periodic, the integral over any interval of length $l$ is the same constant $a = \int_0^{l} |u(z)|^2 dz$. Thus
\begin{equation*}
\|u\|_{L^2(\mathbb{R})}^2 = \sum_{n \in \mathbb{Z}} a.
\end{equation*}
Since $u \in L^2(\mathbb{R})$, we must have $\|u\|_{L^2(\mathbb{R})}^2 < \infty$. This infinite sum $\sum\limits_{n \in \mathbb{Z}} a$ converges if and only if $a = 0$. But if $a = 0$, then $ u(z) = 0$ almost everywhere. This contradicts the fact that $u$ is a nonzero eigenvector. Thus, $\sigma_{\text{pt}}(\cA)_{|L^{2}(\R)}$ is empty, and using \cref{lem:lm3}, we can imply that $\sigma(\cA)_{|L^2(\mathbb{R})} = \sigma_{\text{ess}}(\cA)_{|L^2(\mathbb{R})}$.
\end{proof}

\section{Floquet--Bloch Transform}\label[appendix]{C}
In this section, we develop the Floquet--Bloch theory.
\vspace{10pt}
\begin{definition}[Floquet--Bloch Transform]
Let $\tau \in [-1/2, 1/2)$ be the Bloch parameter (or the Floquet exponent). For a function $f\in\mathcal{S}(\mathbb{R})$, the Floquet--Bloch transform $\mathcal{B}f$ is defined as
\begin{equation}\label{S4:eq17}
    (\mathcal{B}f)(\tau,z) = \sum_{n \in \mathbb{Z}} e^{-i\tau(z+2\pi n)} f(z+2\pi n).
\end{equation}
\end{definition}
Note that for any $\tau \in [-1/2, 1/2)$, the function $(\mathcal{B}f)( \tau,.)$ is $2\pi$-periodic in $z$. 
\vspace{10pt}
\begin{lemma}[Unitary isomorphism on $L^2(\R)$]\label[lemma]{S4:G}
    The map $\mathcal{B}$ uniquely extends to a unitary isomorphism $\mathcal{B} : L^2(\mathbb{R}) \to L^2([-1/2, 1/2); L^{2}_{\mathrm{per}}([0, 2\pi]))$.
\end{lemma}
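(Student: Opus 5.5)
The plan is to prove the lemma in the standard way. First establish an isometry identity for $\mathcal{B}$ on the dense subspace $\mathcal{S}(\R)$. Then extend $\mathcal{B}$ by density to an isometry on all of $L^2(\R)$. Finally show that the range is dense (indeed everything), by writing down an explicit inverse.

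For the isometry, take $f\in\mathcal{S}(\R)$. The series \eqref{S4:eq17} converges absolutely and uniformly on compact sets because $f$ decays rapidly, so $(\mathcal{B}f)(\tau,\cdot)$ is smooth and $2\pi$-periodic for each $\tau$. I will compute its Fourier coefficients. Unfolding the sum over $n$ into an integral over $\R$ gives
\begin{equation*}
\widehat{(\mathcal{B}f)(\tau,\cdot)}(k)=\frac{1}{2\pi}\int_0^{2\pi}\sum_{n\in\Z}e^{-i\tau(z+2\pi n)}f(z+2\pi n)e^{-ikz}\,dz=\frac{1}{2\pi}\int_{\R}f(x)e^{-i(k+\tau)x}\,dx=\widehat f(k+\tau),
\end{equation*}
using the paper's normalization of $\widehat f$. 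Parseval on $\mathbb{T}$ then gives
\begin{equation*}
\|(\mathcal{B}f)(\tau,\cdot)\|^2_{L^2_{\mathrm{per}}}=\sum_{k\in\Z}|\widehat f(k+\tau)|^2 .
\end{equation*}
Integrating over $\tau\in[-1/2,1/2)$ tiles $\R$ by the intervals $[k-1/2,k+1/2)$, so
\begin{equation*}
\|\mathcal{B}f\|^2_{L^2([-1/2,1/2);L^2_{\mathrm{per}})}=\int_{\R}|\widehat f(\xi)|^2\,d\xi .
\end{equation*}
By Plancherel this is a fixed constant multiple of $\|f\|^2_{L^2(\R)}$. The constant depends on the normalizations: the $\frac{1}{2\pi}$ in $\widehat f$ and the averaged norm on $\mathbb{T}$. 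I will either rescale the measure $d\tau$ or insert a constant factor so that $\mathcal{B}$ is exactly isometric. Since $\mathcal{S}(\R)$ is dense in $L^2(\R)$, $\mathcal{B}$ then extends uniquely by continuity to an isometry on $L^2(\R)$. Uniqueness of the extension is automatic from density.

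For surjectivity, I will define the candidate inverse on a dense class of $G(\tau,z)$, for instance functions smooth in $z$ and square-integrable in $\tau$ with finitely many Fourier modes. The formula is
\begin{equation*}
(\mathcal{B}^{-1}G)(x)=\int_{-1/2}^{1/2}e^{i\tau x}G(\tau,x)\,d\tau ,
\end{equation*}
with $G(\tau,\cdot)$ extended $2\pi$-periodically to $\R$. Expanding $G(\tau,x)=\sum_k g_k(\tau)e^{ikx}$ turns this into the inverse Fourier transform of the function $\xi\mapsto g_k(\xi-k)$ on $[k-1/2,k+1/2)$. This is precisely the reverse of the coefficient identity $\widehat{(\mathcal{B}f)(\tau,\cdot)}(k)=\widehat f(k+\tau)$, so $\mathcal{B}\mathcal{B}^{-1}G=G$. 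The range of the isometry $\mathcal{B}$ is therefore dense, and a range that is both dense and closed is everything. An isometric surjection between Hilbert spaces is unitary.

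I expect the main obstacle to be bookkeeping rather than any conceptual difficulty. The unfolding step needs Fubini, which is justified by absolute convergence for Schwartz $f$. The normalization constants must be tracked so that the map is exactly unitary and not merely unitary up to a factor. Finally, the unitary map $L^2([-1/2,1/2);\ell^2(\Z))\cong L^2(\R)$, given by $(\tau,k)\mapsto k+\tau$, must be identified cleanly. That identification is what makes the surjectivity argument essentially a restatement of Plancherel.
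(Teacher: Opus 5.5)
Your argument is correct, but it works on the Fourier side, whereas the paper stays in physical space.

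The paper expands $\bigl|\sum_n e^{-i\tau(z+2\pi n)}f(z+2\pi n)\bigr|^2$ as a double sum and integrates in $\tau$ first. It then uses $\int_{-1/2}^{1/2}e^{2\pi i\tau(m-n)}\,d\tau=\delta_{nm}$ to collapse this to $\sum_n\int_0^{2\pi}|f(z+2\pi n)|^2\,dz=\|f\|^2_{L^2(\R)}$. For surjectivity it writes down $\mathcal{B}^{-1}$, asserts that it is norm preserving ``similarly'', and invokes the BLT theorem. That route is more elementary, since no Plancherel on $\R$ is needed for the isometry. However, it never checks $\mathcal{B}\mathcal{B}^{-1}=\mathcal{I}$ on a dense set, which is what actually gives surjectivity. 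Its computation also silently uses $\int_0^{2\pi}|\cdot|^2\,dz$ rather than the averaged norm on $L^2_{\mathrm{per}}$ defined in Section 2.

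Your identity $\widehat{(\mathcal{B}f)(\tau,\cdot)}(k)=\widehat f(k+\tau)$ exhibits $\mathcal{B}$ as a composition of three maps that are unitary up to the $2\pi$ normalizations: the Fourier transform on $\R$, the relabeling $\xi=k+\tau$, and the inverse Fourier series on $\mathbb{T}$. This buys you three things. Isometry and surjectivity come out of one computation. The constant is pinned down: with the averaged norm one gets $\|\mathcal{B}f\|^2=\frac{1}{2\pi}\|f\|^2_{L^2(\R)}$, which agrees with the paper's computation once its norm convention is made consistent. And it makes transparent why the Bloch operators act on $e^{inz}$ through $m(\kappa(n+\tau))$.

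One detail to tidy: when you verify $\mathcal{B}\mathcal{B}^{-1}G=G$, either take $g_k\in C_c^\infty((-1/2,1/2))$ so that $\mathcal{B}^{-1}G\in\mathcal{S}(\R)$, or note that the coefficient identity extends from $\mathcal{S}(\R)$ to $L^2(\R)$ by continuity of both sides.
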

\begin{proof}
Let $\mathcal{H}=L^2([-1/2, 1/2); L^{2}_{\text{per}}([0, 2\pi]))$. For any $f\in \mathcal{S}(\mathbb{R})$, we compute
\begin{align*}
\|\mathcal{B}f\|_{\mathcal{H}}^2&=\int_{-1/2}^{1/2}\|(\mathcal{B}f)(\tau,z)\|^{2}_{L^{2}_\text{per}([0,2\pi])}\, d\tau\\&=\int_{-1/2}^{1/2} \int_{0}^{2\pi} \Big| \sum_{n \in \mathbb{Z}} e^{-i\tau(z + 2\pi n)} f(z + 2\pi n) \Big|^2 \,dz \, d\tau\\ &= \int_{-1/2}^{1/2}\int_{0}^{2\pi} \sum_{n\in\Z} \sum_{m\in\Z}f(z + 2\pi n) \overline{f(z + 2\pi m)} e^{i\tau 2\pi (m - n)} \, dz\, d\tau\\&= \sum_{n\in\Z} \sum_{m\in\Z} \int_{0}^{2\pi} f(z + 2\pi n) \overline{f(z + 2\pi m)} \left[ \int_{-1/2}^{1/2} e^{i\tau 2\pi (m - n)} \, d\tau \right]\, dz.
\end{align*}
By the orthogonality of the Fourier basis on $[-1/2, 1/2)$, the bracketed integral is exactly the Kronecker delta $\delta_{nm}$. Therefore,
\begin{align*}
\|\mathcal{B}f\|_{\mathcal{H}}^2 &= \sum_{n \in \mathbb{Z}} \int_{0}^{2\pi} |f(z + 2\pi n)|^2 \, dz\\ &=\sum_{n \in \mathbb{Z}} \int_{2\pi n}^{2\pi(n+1)} |f(x)|^2 \, dx\\&= \int_{-\infty}^{\infty} |f(x)|^2 \, dx \\&= \|f\|_{L^2(\mathbb{R})}^2.
\end{align*}
To prove surjectivity, we can show that $\mathcal{B}$ admits an explicit inverse known as the inverse Bloch transform, $\mathcal{B}^{-1}:L^2([-1/2, 1/2); L^{2}_{\text{per}}([0, 2\pi]))\to L^{2}(\R)$ defined by 
\begin{equation}\label{S7:eq1}
    (\mathcal{B}^{-1}W)(z)=\int_{-1/2}^{1/2} e^{i\tau z} W(\tau, z) \,d\tau
\end{equation} 
for any $W(\tau,z)$ in a dense subspace of $\mathcal{H}$. We can show similarly that $\mathcal{B}^{-1}$ also preserves norm. Then using Bounded Linear Transformation Theorem \citep[see][Theorem I.7]{ReedI}, $\mathcal{B}$ uniquely extends to a unitary isomorphism $\mathcal{B} : L^2(\mathbb{R}) \to L^2([-1/2, 1/2); L^{2}_{\text{per}}([0, 2\pi]))$.
\end{proof}
Let $\cA:L^{2}(\R)\to L^{2}(\R)$ be a closed, densely defined operator with $2\pi$-periodic coefficients. Then, for such an operator $\mathcal{A}$, the following proposition is proved.
\vspace{10pt}
\begin{proposition}[Bloch operator]\label[proposition]{C:prop1}
    Let $\cA:L^{2}(\R)\to L^{2}(\R)$ be a closed, densely defined operator with $2\pi$-periodic coefficients and domain $\mathcal{D}(\cA)=H^\mu(\R)$. Let $\mathcal{B}$ from $L^2(\mathbb{R})$ to $L^2([-1/2, 1/2); L^{2}_{\mathrm{per}}([0, 2\pi]))$ be the Floquet--Bloch transform as defined in \eqref{S4:eq17} and $\mathcal{B}^{-1}$ from $L^2([-1/2, 1/2); L^{2}_{\mathrm{per}}([0, 2\pi]))$ to $L^{2}(\R)$ be the inverse Bloch transform as defined by \eqref{S7:eq1}. Then for every $f\in L^2([-1/2, 1/2); H^\mu_{\mathrm{per}}([0, 2\pi]))$ the operator $\cB\cA\cB^{-1}$ from $L^2([-1/2, 1/2); L^{2}_{\mathrm{per}}([0, 2\pi]))$ to $L^2([-1/2, 1/2); L^{2}_{\mathrm{per}}([0, 2\pi]))$ decomposes for each $\tau \in [-1/2,1/2)$ as    
    \begin{equation*}
        (\cB\cA\cB^{-1}f)(\tau,z)=\cA_\tau f(\tau,z) \qquad \text{for all } z\in[0,2\pi],
    \end{equation*}
    where for each $\tau \in [-1/2,1/2)$ the Bloch operator $\cA_\tau:L^{2}_{\mathrm{per}}([0, 2\pi])\to L^{2}_{\mathrm{per}}([0, 2\pi])$ with $\mathcal{D}(\cA_\tau)=H^{\mu}_{\mathrm{per}}([0, 2\pi])$ is defined as 
\begin{equation}\label{S7:eq3}
\cA_\tau=e^{-i\tau z}\cA e^{i\tau z}.   
\end{equation}
\end{proposition}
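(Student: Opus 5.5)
The plan is to compute the conjugated operator directly on the Fourier side of the periodic cell. The first step is to work on the dense class $f\in L^2([-1/2,1/2);C^\infty_{\mathrm{per}}([0,2\pi]))$ and set $g=\cB^{-1}f$, so that $g(z)=\int_{-1/2}^{1/2}e^{i\tau z}f(\tau,z)\,d\tau$. Writing $f(\tau,z)=\sum_n \widehat f(\tau,n)e^{inz}$ gives $g(z)=\int\sum_n \widehat f(\tau,n)e^{i(n+\tau)z}\,d\tau$. This expresses $g$ as a superposition of plane waves $e^{i\xi z}$ with $\xi=n+\tau$, and the map $(\tau,n)\mapsto n+\tau$ is a bijection onto $\R$. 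Consequently the Fourier transform of $g$ on $\R$ is $\widehat g(n+\tau)=\widehat f(\tau,n)$, up to the normalization constant fixed in \S\ref{Section2}.

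The second step is to apply $\cA$ term by term. For a differential or multiplier part with symbol $a(\xi)$ (for example $\partial_z$ or $\cM_{\ka}\partial_z$, with symbols $i\xi$ and $i\xi\,m(\ka\xi)$), the plane wave transforms as $a(\partial_z)e^{i(n+\tau)z}=a(n+\tau)e^{i(n+\tau)z}=e^{i\tau z}\big(a(\partial_z+i\tau)e^{inz}\big)$. This matches the definition of $e^{-i\tau z}\cA e^{i\tau z}$ on $e^{inz}$, and it is the rule already used to define $\cM_{\ka(\eps),\tau}$. For a multiplication part by a $2\pi$-periodic coefficient $p(z)$, $p$ commutes with $e^{i\tau z}$ and preserves $2\pi$-periodicity in $z$, so $p\,g=\int e^{i\tau z}\,p(z)f(\tau,z)\,d\tau$. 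A general $\cA$ of the type considered is a finite sum of products (periodic coefficient)$\times$(Fourier multiplier). Combining the two cases gives
\[
\cA g(z)=\int_{-1/2}^{1/2}e^{i\tau z}\,\big(\cA_\tau f(\tau,\cdot)\big)(z)\,d\tau,
\]
where each $\cA_\tau f(\tau,\cdot)$ is $2\pi$-periodic.

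The third step is to apply $\cB$. Because $\cB$ and $\cB^{-1}$ are mutually inverse unitaries by \cref{S4:G}, and the representation above has exactly the form of \eqref{S7:eq1} with periodic integrand $\cA_\tau f(\tau,\cdot)$, I would conclude $(\cB\cA\cB^{-1}f)(\tau,z)=\cA_\tau f(\tau,z)$ for almost every $\tau$. Uniqueness of the inverse-transform representation follows from the isometry in \cref{S4:G}. For a direct check, one can substitute into \eqref{S4:eq17} and use Poisson summation $\sum_k e^{2\pi i k(\tau'-\tau)}=\delta(\tau'-\tau)$ on the torus of $\tau$.

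The final step is to pass from the dense class to all $f\in L^2([-1/2,1/2);H^\mu_{\mathrm{per}})$. Each $\cA_\tau$ is bounded from $H^\mu_{\mathrm{per}}$ to $L^2_{\mathrm{per}}$, uniformly in $\tau$. This uses the growth bound (A2) on $m(\ka(n+\tau))$, as in the coercivity estimate of \cref{S4:lemz}, together with $W^{1,\infty}$ bounds on the coefficients. The operator $\cA$ is closed with domain $H^\mu(\R)$, and $\cB$ maps $H^\mu(\R)$ onto $L^2(\,\cdot\,;H^\mu_{\mathrm{per}})$, since the Sobolev weight $(1+|n+\tau|^2)^{\mu}$ is comparable to $(1+n^2)^\mu$. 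Both sides are therefore continuous in the appropriate graph norms and the identity extends by density. I expect this domain identification, $\cB H^\mu(\R)=L^2(\,\cdot\,;H^\mu_{\mathrm{per}})$ with uniform constants in $\tau$, together with the rigorous justification of the interchange of the sum and integral for the nonlocal multiplier, to be the main technical obstacle. I would handle it entirely on the Fourier side via the identity $\widehat g(n+\tau)=\widehat f(\tau,n)$ and Plancherel, which avoids any pointwise kernel manipulation of $\cM$.
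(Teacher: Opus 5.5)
Your argument is correct, but it takes a different route from the paper's.

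The paper stays in physical space. It moves $\cA$ under the $\tau'$-integral onto the quasi-periodic functions $h(z)=e^{i\tau' z}f(\tau',z)$. It then uses only that $\cA$ commutes with translation by $2\pi$ to write $(\cA h)(z+2\pi n)=e^{2\pi i n\tau'}(\cA h)(z)$. Finally it collapses the $\tau'$-integral with the Dirac comb $\sum_{n\in\Z}e^{2\pi i n(\tau'-\tau)}=\delta(\tau'-\tau)$, working only on a dense subspace.

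You instead unfold the Fourier transform, $\widehat{\cB^{-1}f}(n+\tau)=\widehat f(\tau,n)$. You treat periodic multiplications and Fourier multipliers separately, using the concrete structure of $\cA$. You then identify $\cB\cA\cB^{-1}f$ through $\cB\cB^{-1}=I$ rather than through a delta function.

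The paper's computation is shorter and formally needs only translation invariance, not the form of $\cA$. However, it applies $\cA$ to functions that are not in $L^2(\R)$, interchanges $\cA$ with the integral, and uses Poisson summation distributionally, all without justification. It also never carries out the extension from the dense subspace to all of $L^2([-1/2,1/2);H^\mu_{\mathrm{per}}([0,2\pi]))$.

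Your version makes every step an $L^2$ identity via Plancherel. The same unfolding gives $\cB H^\mu(\R)=L^2([-1/2,1/2);H^\mu_{\mathrm{per}}([0,2\pi]))$ with $\tau$-uniform constants, because $1+|n+\tau|^2$ is comparable to $1+n^2$ for $|\tau|\le 1/2$. That is exactly what the density step needs, so your route supplies the step the paper omits.

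The price is that your argument covers only operators built from periodic coefficients and Fourier multipliers (in either order). That is all $\cL^\eps$ requires. Your conclusion for almost every $\tau$ is also the accurate form of the paper's ``for each $\tau$''.

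One small correction: the uniform bound of $\cA_\tau$ from $H^\mu_{\mathrm{per}}$ to $L^2_{\mathrm{per}}$ comes from the upper bound in (A2) together with boundedness of the coefficients. It does not come from the coercivity estimate of \cref{S4:lemz}, which is a lower bound.
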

\begin{proof}
   Applying $\cB^{-1}$ to every $f$ in a dense subspace of $L^2([-1/2, 1/2); L^{2}_{\mathrm{per}}([0, 2\pi]))$, we get
\begin{equation*}
    (\mathcal{B}^{-1}f)(z)=\int_{-1/2}^{1/2} e^{i\tau' z} f(\tau', z) \,d\tau'.
\end{equation*}
Applying $\cA$ to both sides, we obtain
\begin{equation*}
    (\cA\cB^{-1}f)(z)=\int_{-1/2}^{1/2} \cA\left(e^{i\tau' z} f(\tau', z)\right) \,d\tau'.
\end{equation*}
Finally applying the Floquet--Bloch transform $\cB$ on both sides, yields
\begin{align*}
(\cB\cA\cB^{-1}f)(\tau,z)&=\sum\limits_{n\in\Z}e^{-i\tau(z+2\pi n)}(\cA\cB^{-1}f)(z+2\pi n)\\
&=\sum\limits_{n\in\Z}e^{-i\tau(z+2\pi n)}\int_{-1/2}^{1/2} \cA\left(e^{i\tau' (z+2\pi n)} f(\tau', z+2\pi n)\right) \,d\tau'.
\end{align*}
 Since $f(\tau',.)\in H^\mu_\text{per}([0,2\pi])$, hence $f(\tau', z+2\pi n)=f(\tau',z)$ for all $z\in[0,2\pi]$. Therefore,
\begin{align*}
    (\cB\cA\cB^{-1}f)(\tau,z)&=e^{-i\tau z}\int_{-1/2}^{1/2}\left(\sum\limits_{n\in\Z}e^{i(\tau'-\tau)2\pi n}\right)\cA\left(e^{i\tau' z} f(\tau', z)\right) \,d\tau'\\
    &=e^{-i\tau z}\int_{-1/2}^{1/2}\delta(\tau'-\tau)\cA\left(e^{i\tau' z} f(\tau', z)\right) \,d\tau'\\
    &=e^{-i\tau z}\cA\left(e^{i\tau z} f(\tau, z)\right)\\
    &=\cA_\tau f(\tau,z),
\end{align*}
for all $z\in[0,2\pi]$. Therefore for every $f\in L^2([-1/2, 1/2); H^\mu_{\text{per}}([0, 2\pi]))$ following equality holds \begin{equation*}
    (\cB\cA\cB^{-1}f)(\tau,z)=\cA_\tau f(\tau,z) \qquad \text{for all } z\in[0,2\pi],
\end{equation*}where for each $\tau \in [-1/2,1/2)$  we can define the operator $\cA_\tau:L^{2}_{\text{per}}([0, 2\pi])\to L^{2}_{\text{per}}([0, 2\pi])$ with $\mathcal{D}(\cA_\tau)=H^{\mu}_{\text{per}}([0, 2\pi])$ as   
\begin{equation*}
    \cA_\tau=e^{-i\tau z}\cA e^{i\tau z}.
\end{equation*}
\end{proof}

\section{Abstract Perturbation Theory}\label[appendix]{D}
\begin{definition}[Relatively Bounded Operator]\citep[see][]{Kato1980}\label[definition]{S5:def3}
    Let $\mathcal{T}$ and $\mathcal{A}$ be operators defined on a Banach space $X$ with $\mathcal{D}(\mathcal{T})\subseteq\mathcal{D}(\mathcal{A})$ and 
\begin{equation}\label{S5:eq6} \|\mathcal{A}u\|\leq a\|u\|+b\|\mathcal{T}u\|, \qquad u\in\mathcal{D}(\mathcal{T}),
\end{equation}
where $a,b$ are nonnegative constants. Then we say that $\mathcal{A}$ is relatively bounded with respect to $\mathcal{T}$ or simply $\mathcal{T}$-bounded. The greatest lower bound $b_0$ of all possible constants $b$ in \eqref{S5:eq6} will be called the relative bound of $\mathcal{A}$ with respect to $\mathcal{T}$ or simply the $\mathcal{T}$-bound of $\mathcal{A}$.  
\end{definition}
Now we review the perturbation theory results. Refer \citep{Kato1980} for details.
\vspace{10pt}
\begin{definition}[Gap Between Closed Operators]\label{S5:def5} 
Let $X$ and $Y$ be Banach spaces and $\mathcal{C}(X,Y)$ be the set of all closed operators from $X$ to $Y$. If $\mathcal{A}, \mathcal{T} \in\mathcal{C}(X,Y)$, their graphs $\cG(\cA),\cG(\cT)$ are closed linear manifolds of the product space $X\times Y$. Set 
\begin{equation*}
\delta(\cA,\cT)=\delta(\cG(\cA),\cG(\cT)),
\end{equation*} where 
\begin{align*}
\delta(\cG(\cA),\cG(\cT))=
\begin{cases}
\sup\limits_{\substack{x\in\cG(\cA)\\\|x\|=1 }} \operatorname{dist}(x,\cG(\cT)), \quad &\cG(\cT)\neq \{0\}, \\
0, \quad &\cG(\cT)= \{0\},
\end{cases}
\end{align*}
and 
\begin{equation*}
    \widehat{\delta}(\cA,\cT)=\widehat{\delta}(\cG(\cA),\cG(\cT))=\max\left\{\delta(\cG(\cA),\cG(\cT)),\delta(\cG(\cT),\cG(\cA))\right\}.
\end{equation*}
$\widehat{\delta}(\cA,\cT)$ is called the gap between the closed operators $\cA$ and $\cT$.  
\end{definition}
\vspace{10pt}
\begin{definition}[Generalized Convergence]\label{S5:def6}
    A family of operators $\cT^\eps\in\mathcal{C}(X)$ converges to $\cT\in\mathcal{C}(X)$ in the generalized sense iff $\widehat{\delta}(\cT^\eps,\cT)\to 0$ as $\eps\to 0$.
\end{definition}
\vspace{10pt}
\begin{theorem}[Gap Between Relatively Bounded Closed Operators]\label{S5:thm7}
    Let $\cT\in\cC(X)$ and let $\cA$ be $\cT$-bounded with relative bound $b$ less than $1$, so that we have the inequality \eqref{S5:eq6} with $b < 1$. Then $\mathcal{E}= \cT+ \cA\in\cC(X)$ and
\begin{equation}\label{S5:eq8}
    \widehat{\delta}(\mathcal{E},\cT)\leq (1-b)^{-1}(a^{2}+b^{2})^{1/2}.
\end{equation}
\end{theorem}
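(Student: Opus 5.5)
This is Kato's gap estimate (Kato, Chapter IV, Theorem 2.14). I would prove it directly from the graph-distance definition, in two stages: closedness of $\mathcal{E}=\cT+\cA$, then the two one-sided gap bounds.

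\textbf{Step 1: closedness.} Note first that $\mathcal{D}(\mathcal{E})=\mathcal{D}(\cT)$. Take $u\in\mathcal{D}(\cT)$. From \eqref{S5:eq6},
\[
\|\cT u\|\le\|\mathcal{E}u\|+\|\cA u\|\le\|\mathcal{E}u\|+a\|u\|+b\|\cT u\|,
\]
which gives
\[
\|\cT u\|\le(1-b)^{-1}\bigl(\|\mathcal{E}u\|+a\|u\|\bigr).
\]
Hence the graph norms of $\cT$ and $\mathcal{E}$ are equivalent on $\mathcal{D}(\cT)$. Now let $u_n\to u$ with $\mathcal{E}u_n\to v$. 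The bound above makes $\cT u_n$ Cauchy, so closedness of $\cT$ gives $u\in\mathcal{D}(\cT)$ and $\cT u_n\to\cT u$. Relative boundedness then gives $\cA u_n\to\cA u$, and therefore $\mathcal{E}u=v$.

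\textbf{Step 2: bound on $\delta(\cG(\mathcal{E}),\cG(\cT))$.} Take a unit vector $(u,\mathcal{E}u)\in\cG(\mathcal{E})$. Use $(u,\cT u)\in\cG(\cT)$ as the competitor. Then
\[
\operatorname{dist}\le\|\cA u\|\le a\|u\|+b\|\cT u\|.
\]
The task is to control $a\|u\|+b\|\cT u\|$ by the unit graph norm $(\|u\|^2+\|\mathcal{E}u\|^2)^{1/2}=1$. By Cauchy--Schwarz, $a\|u\|+b\|\cT u\|\le(a^2+b^2)^{1/2}(\|u\|^2+\|\cT u\|^2)^{1/2}$. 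Then $\|\cT u\|\le\|\mathcal{E}u\|+\|\cA u\|$ lets me bound $(\|u\|^2+\|\cT u\|^2)^{1/2}$ by $1+\|\cA u\|$ via the triangle inequality in $\R^2$. Writing $t=\|\cA u\|$, this gives $t\le(a^2+b^2)^{1/2}(1+t)$. Rearranging, $t\le(a^2+b^2)^{1/2}/(1-(a^2+b^2)^{1/2})$ whenever $a^2+b^2<1$. That is not quite $(1-b)^{-1}(a^2+b^2)^{1/2}$, so the bookkeeping needs refinement.

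The sharper route is to estimate directly
\[
\|\cT u\|\le(1-b)^{-1}\bigl(\|\mathcal{E}u\|+a\|u\|\bigr).
\]
Substituting gives
\[
\|\cA u\|\le a\|u\|+b(1-b)^{-1}\bigl(\|\mathcal{E}u\|+a\|u\|\bigr)=(1-b)^{-1}\bigl(a\|u\|+b\|\mathcal{E}u\|\bigr).
\]
Cauchy--Schwarz with $\|u\|^2+\|\mathcal{E}u\|^2=1$ then yields exactly $(1-b)^{-1}(a^2+b^2)^{1/2}$.

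\textbf{Step 3: bound on $\delta(\cG(\cT),\cG(\mathcal{E}))$.} Symmetrically, take a unit vector $(u,\cT u)$ and use the competitor $(u,\mathcal{E}u)$. The distance is at most
\[
\|\cA u\|\le a\|u\|+b\|\cT u\|\le(a^2+b^2)^{1/2}\le(1-b)^{-1}(a^2+b^2)^{1/2}.
\]
Taking the maximum of Steps 2 and 3 gives \eqref{S5:eq8}.

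The only delicate point is the constant in Step 2, which is handled by eliminating $\|\cT u\|$ before applying Cauchy--Schwarz. In Banach (non-Hilbert) spaces one should fix the product norm on $X\times X$ as the $\ell^2$-type norm $(\|u\|^2+\|v\|^2)^{1/2}$, as Kato does, so that these Cauchy--Schwarz steps are valid.
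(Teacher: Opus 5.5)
Your proof is correct and is essentially the argument of Kato, Chapter IV, Theorem 2.14, which is exactly what the paper cites. It establishes closedness from the equivalence of graph norms via $(1-b)\|\cT u\|\le\|\mathcal{E}u\|+a\|u\|$, uses the competitors $(u,\cT u)$ and $(u,\mathcal{E}u)$, eliminates $\|\cT u\|$ in favour of $\|\mathcal{E}u\|$, and then applies Cauchy--Schwarz in $\R^2$ under the $\ell^2$ product norm; the abandoned first estimate in Step 2 is unnecessary and can simply be deleted.
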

\begin{proof}
    Refer \citep[Chapter IV, \S2, Theorem 2.14]{Kato1980}.
\end{proof}

\begin{definition}[Finite System of Eigenvalues]\label{S5:def8}
    It is a finite collection $\Sigma $ of eigenvalues of $\cT$ such that $\Sigma$ is isolated from the rest of the spectrum, and each eigenvalue is of finite algebraic multiplicity. The total algebraic multiplicity of the system is the sum of the algebraic multiplicity of each individual eigenvalue.
\end{definition}
\vspace{10pt}
\begin{theorem}[Continuity of a Finite System of Eigenvalues]\label{S5:thm9} Let 
$X$ be a complex Banach space and $\cT^\eps,\cT\in\cC(X)$. Let $\sum(\cT)=\{\lambda_1,\cdots,\lambda_r\}$ be a finite system of eigenvalues of $\cT$ with algebraic multiplicity $m$. Let $\Gamma$ be a rectifiable, simple closed curve such that $$\sum(\cT)\subset\operatorname{int}\Gamma
,\qquad \sigma(\cT)\setminus\sum(\cT)\subset\operatorname{ext}\Gamma, \qquad \Gamma\subset\rho(\cT).$$ If $\cT^\eps\to\cT$ in the generalized sense as $\eps\to 0$ then for sufficiently small $\eps$, $\cT^\eps$ also has a finite system of eigenvalues $\sum(\cT^\eps)$ such that 
$$\sum(\cT^\eps)\subset \operatorname{int}\Gamma,$$ with total algebraic multiplicity $m$. In particular, if $\sigma(\cT)=\{\lambda_0\}$ consists of a simple isolated eigenvalue, then for sufficiently small
$\eps$, the operator $\cT^\eps$ has exactly one simple eigenvalue
inside any sufficiently small disk around $\lambda_0$.
\end{theorem}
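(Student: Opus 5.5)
The statement to prove is \cref{S5:thm9}, continuity of a finite system of eigenvalues under generalized convergence. The plan is to follow Kato's Riesz-projection argument (Chapter IV, \S3).

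\textbf{Step 1: the contour stays in the resolvent sets.} I first show that $\Gamma\subset\rho(\cT^\eps)$ for all sufficiently small $\eps$. The key fact is stability of bounded invertibility under small gap perturbations. If $\cT-\zeta$ is boundedly invertible and $\widehat{\delta}(\cT^\eps,\cT)$ is small relative to a quantity involving $\|(\cT-\zeta)^{-1}\|$, then $\cT^\eps-\zeta$ is also boundedly invertible, and
\[
\|(\cT^\eps-\zeta)^{-1}-(\cT-\zeta)^{-1}\|\to 0 .
\]
I would prove this by expressing the gap through graphs. For $\zeta\in\Gamma$, the map $\cT-\zeta$ induces an isomorphism from the graph $\cG(\cT)$ onto $X$. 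Since $\widehat\delta(\cT^\eps-\zeta,\cT-\zeta)\le 2(1+|\zeta|)^2\,\widehat\delta(\cT^\eps,\cT)$ (Kato IV.2.17), a Neumann-type argument on the graph transfers invertibility to $\cT^\eps-\zeta$. Because $\Gamma$ is compact and $\zeta\mapsto\|(\cT-\zeta)^{-1}\|$ is continuous on $\Gamma$, this bound holds uniformly in $\zeta\in\Gamma$. Hence the resolvents converge in operator norm, uniformly on $\Gamma$.

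\textbf{Step 2: convergence of the Riesz projections.} Define
\[
P^\eps=-\frac{1}{2\pi i}\oint_\Gamma(\cT^\eps-\zeta)^{-1}\,d\zeta,\qquad P=-\frac{1}{2\pi i}\oint_\Gamma(\cT-\zeta)^{-1}\,d\zeta .
\]
By Step 1, $\|P^\eps-P\|\to0$. Both operators are projections, and $\operatorname{rank}P=m<\infty$ because $\Sigma(\cT)$ is a finite system of total algebraic multiplicity $m$. Two projections with $\|P^\eps-P\|<1$ have isomorphic ranges (Kato I.4.6), so $\dim\operatorname{range}P^\eps=m$.

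\textbf{Step 3: identifying the spectrum inside $\Gamma$.} The range of $P^\eps$ is invariant under $\cT^\eps$ and lies in $\mathcal{D}(\cT^\eps)$. The spectrum of $\cT^\eps$ inside $\Gamma$ equals the spectrum of the restriction of $\cT^\eps$ to this finite-dimensional range. That spectrum is a finite set of eigenvalues whose algebraic multiplicities sum to $m$, and it is isolated from the rest of $\sigma(\cT^\eps)$, which lies outside $\Gamma$.

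For the special case $m=1$, shrink $\Gamma$ to an arbitrarily small circle around $\lambda_0$. This yields exactly one simple eigenvalue of $\cT^\eps$ in that disk.

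\textbf{Main obstacle.} I expect Step 1 to be the hardest part. Generalized convergence controls only graphs, not operator norms, and the operators may be unbounded with varying domains. Translating the gap estimate into uniform invertibility of $\cT^\eps-\zeta$ on $\Gamma$ requires the quantitative lemma linking $\widehat\delta$ to resolvent differences. If a self-contained proof is too long, I would cite Kato, Chapter IV, Theorem 2.25 and Theorem 3.16 for that step.
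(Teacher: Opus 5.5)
Your proposal is correct. It is precisely Kato's argument (Ch.~IV, \S\S2--3): invertibility of $\cT^\eps-\zeta$ is stable under small gap, uniformly on the compact contour $\Gamma$; the Riesz projections then converge in norm, and their ranks are equal. This is exactly what the paper relies on, since it states this theorem without proof and only refers to Kato.

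Your constant $2(1+|\zeta|)^2$ is weaker than Kato's $2(1+|\zeta|^2)$ but still valid. Your reading of the final clause as the case $\Sigma(\cT)=\{\lambda_0\}$ with $m=1$, rather than the literal $\sigma(\cT)=\{\lambda_0\}$, is the intended one.
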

\bibliographystyle{amsalpha}
\bibliography{Bibliography}
\end{document}